\tikzset{
  symbol/.style={
    draw=none,
    every to/.append style={
      edge node={node [sloped, allow upside down, auto=false]{$#1$}}}
  }
}
\newtheorem{thm}{Theorem}[section]
\newtheorem{lem}[thm]{Lemma}
\newtheorem{prop}[thm]{Proposition}
\newtheorem{conj}[thm]{Conjecture}
\theoremstyle{definition}
\theoremstyle{remark}
\newtheorem{rem}[thm]{Remark}
\newcommand{\oo}{\mathcal{O}}
\newcommand{\bp}{\mathbb{P}}
\newcommand{\bc}{\mathbb{C}}
\newcommand{\Hom}{{\rm Hom}}
\newcommand{\lHom}{\mathcal{H}om}
\newcommand{\Ext}{{\rm Ext}}
\newcommand{\lExt}{\mathcal{E}xt}
\newcommand{\ext}{{\rm ext}}
\newcommand{\Quot}{{\rm Quot}}
\newcommand{\svk}{S_{V,k}}
\newcommand{\rk}{{\rm rk\,}}
\newcommand{\ch}{{\rm ch}}
\newcommand{\im}{{\rm im\,}}
\newcommand{\id}{{\rm id}}
\newcommand{\vir}{{\rm vir}}
\newcommand{\jbullet}{\mathbb{J}^\bullet}
\newcommand{\ozk}{{\mathcal{O}_{\mathscr{Z}}}}
\newcommand{\IT}{{\rm IT}}
\newcommand{\tr}{{\rm tr}}
\newcommand{\xrightarrowdbl}[2][]{%
  \xrightarrow[#1]{#2}\mathrel{\mkern-14mu}\rightarrow
}
\title{Rank-one sheaves and stable pairs on surfaces}
\author{Thomas Goller}
\address{Korea Institute for Advanced Study, Seoul}
\email{goller@kias.re.kr}
\author{Yinbang Lin}
\address{Yau Mathematical Sciences Center, Tsinghua University, Beijing}
\curraddr{School of Mathematical Sciences, Tongji University, Shanghai}
\email{yinbang.lin@icloud.com}
\date{}
\begin{document}

\maketitle

\begin{abstract}
    We study rank-one sheaves and stable pairs on a smooth projective complex surface.
    We obtain an embedding of the moduli space of limit stable pairs into a smooth space.
    The embedding induces a perfect obstruction theory \cite{LiTia98, BehFan97}, which, over a surface with irregularity $0$, agrees with the usual deformation-obstruction theory \cite{Lin18}.
    The perfect obstruction theory defines a virtual fundamental class on the moduli space.
    Using the embedding, we show that the virtual class equals the Euler class of a vector bundle on the smooth ambient space.
    As an application, we show that on $\mathbb{P}^2$, the expected count of the finite Quot scheme in \cite{BerGolJoh16} is its actual length.
    We also obtain a universality result for tautological integrals on the moduli space of stable pairs.
\end{abstract}


\vskip40pt
\section{Introduction}

Grothendieck's Quot scheme, which parametrizes quotients of a fixed coherent sheaf, is a fundamental object in moduli theory.
Over projective curves, the study of Quot schemes has been very fruitful.
Using intersection theory on Quot schemes, Marian and Oprea provided a finite-dimensional proof of the Verlinde formula \cite{Mar07,MarOpr07a,MarOpr07b} and proved the strange duality conjecture over curves \cite{MarOpr07}.
One nice property of Quot schemes on curves is that when the fixed coherent sheaf is general, the Quot scheme is always of the expected dimension \cite{Gol18}.

Over a smooth projective surface, general Quot schemes are not as well-behaved, contrary to the nice example of Hilbert schemes of points.
Due to the possibility of torsion in the quotient, a Quot scheme on a surface will usually fail to be of the expected dimension.
Nevertheless, inspired by Marian and Oprea's study of strange duality over curves, Bertram, the first author, and Johnson \cite{BerGolJoh16} used Quot schemes of expected dimension 0 to provide evidence of Le Potier's strange duality conjecture for del Pezzo surfaces.
Also, in certain cases, the bad behavior of the Quot scheme can be offset by the existence of a virtual fundamental class \cite{Schultheis12, MarOprPan1701, OprPan19}.
Even when the Quot scheme is equipped with a virtual fundamental class, due to the complication of torsion, we do not know how to describe the class explicitly without using virtual localization (as in \cite{MarOprPan1701}).
However, Pandharipande and Thomas' study \cite{PanTho09, PanTho14} of curve counting invariants on Calabi-Yau 3-folds suggests that we can use \emph{stable pairs} to get around the issue of torsion.

Let $X$ be a smooth complex projective variety and $V$ be a fixed coherent sheaf over $X$.
A pair
\begin{equation*}
    (F,\alpha\colon V\to F),
\end{equation*}
consisting of a coherent sheaf $F$ and a morphism $\alpha$ from  $V$ to $F$, is \emph{(limit) stable} 
if $F$ is pure and the dimension of the cokernel of $\alpha$ is strictly smaller than the dimension of $F$.\footnote{By the dimension of a coherent sheaf, we mean the dimension of its support.}
Compared to a quotient, we only require generic surjectivity for a stable pair, but we impose the purity condition on $F$.
The second author \cite{Lin18} showed that the moduli functor of equivalence classes of stable pairs is represented by a projective scheme, extending Koll\'ar's work \cite{Kol0805}.
\footnote{For small stability conditions on pairs, see \cite{Wan15}. Stable pairs from a trivial vector bundle were also studied in \cite{She11}.}

In this paper, we work in a setting similar to \cite{BerGolJoh16}, but we focus on stable pairs rather than the Quot scheme and consider more general surfaces. Let $X$ be a smooth projective surface over $\bc$ and $V$ be a locally free coherent sheaf on $X$. We let $S_{V,k}$ denote the moduli space of stable pairs of the form $(I_Z, \alpha \colon V \to I_Z)$, where $Z$ is a 0-dimensional subscheme of $X$ of length $k$ and $I_Z$ is its ideal sheaf. Our first result is an embedding of $\svk$ into a smooth ambient space. Namely, letting $\bp := \bp\Hom(V,\oo_X)$ denote the projective space of lines in $\Hom(V,\oo_X)$ and $X^{[k]}$ be the Hilbert scheme of $k$ points on $X$, there is a closed immersion
\[
    \iota = \iota_{\bp} \times \iota_{X^{[k]}} \colon S_{V,k} \hookrightarrow \bp \times X^{[k]},
\]
whose image is the zero locus of a section of the vector bundle
\[
    \pi_1^* \oo_{\bp}(1) \otimes \pi_2^* {V^*}^{[k]}
\]
(Proposition~\ref{zero-locus}).
Here, $\pi_1$ and $\pi_2$ are the projection maps on $\bp\times X^{[k]}$ and ${V^*}^{[k]}$ denotes the usual integral transform (for the definition, see \S\ref{hilb}). Even when $V$ is not locally free, we still have a similar statement (Remark \ref{zero-locus-nlf}).

Our main result is that, under certain conditions, the moduli space $S_{V,k}$ has a virtual fundamental class that can be explicitly described using the embedding.
\begin{thm}\label{vfc}
Let $X$ be a smooth projective surface and $V$ be a locally free sheaf over $X$.
Suppose the dual $V^*$ has no higher cohomology.
Then, 
\begin{enumerate}[(a)]
  \item the embedding $\iota$ induces a perfect obstruction theory, which,  under the condition (\ref{tr-def-surj}), agrees with the usual deformation-obstruction theory;
\item the perfect obstruction theory in turn induces a virtual fundamental class $[S_{V,k}]^{\vir}$, 
which agrees with the Euler class:
in $A_{\chi(V^*)-1-(\rk V-2)k}(\bp\times X^{[k]})$,
  \begin{equation*}
    \iota_* [S_{V,k}]^{\vir} = e(\pi_1^* \oo_{\bp}(1) \otimes \pi_2^* {V^*}^{[k]}).
  \end{equation*}
\end{enumerate}
\end{thm}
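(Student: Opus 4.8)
The plan is to exploit Proposition~\ref{zero-locus}, which presents $\svk$, via $\iota$, as the zero locus $Z(s)$ of a section $s$ of the vector bundle $E := \pi_1^* \oo_{\bp}(1) \otimes \pi_2^* {V^*}^{[k]}$ on the smooth variety $M := \bp \times X^{[k]}$. This is exactly the setting in which a section of a vector bundle on a smooth ambient space produces both a canonical perfect obstruction theory and a localized Euler class, so I expect the two parts of the theorem to follow from (a) identifying this canonical obstruction theory with the one of \cite{Lin18}, and (b) comparing the resulting virtual class with the top Chern class of $E$.

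For part (a), I would first write down the standard obstruction theory of a zero locus. Writing $I$ for the ideal of $\svk$ in $M$, the section $s$ induces a surjection $E^\vee|_{\svk} \twoheadrightarrow I/I^2$ onto the conormal sheaf, and composing with the natural map $I/I^2 \to \Omega_M|_{\svk}$ gives a two-term complex
\[
    \mathbb{E}^\bullet := \left[\, E^\vee|_{\svk} \xrightarrow{\;ds\;} \Omega_M|_{\svk} \,\right],
\]
placed in degrees $-1$ and $0$, together with a natural morphism $\mathbb{E}^\bullet \to L_{\svk}^{\geq -1}$ to the truncated cotangent complex. Since $M$ is smooth and $E$ is locally free, $\mathbb{E}^\bullet$ is perfect of amplitude $[-1,0]$, so this is a perfect obstruction theory \cite{BehFan97}. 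A bookkeeping check confirms the expected dimension: using that $V^*$ has no higher cohomology one gets $\dim \bp = h^0(V^*) - 1 = \chi(V^*) - 1$, while $\dim X^{[k]} = 2k$ and $\rk {V^*}^{[k]} = k\,\rk V$, so the virtual dimension is $\dim M - \rk E = \chi(V^*) - 1 - (\rk V - 2)k$, matching the degree of the Chow group in the statement.

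The substantive work in (a) is the comparison with the intrinsic deformation--obstruction theory of \cite{Lin18}, whose tangent and obstruction spaces are governed by the hyper-$\Ext$ groups of the two-term complex $[V \xrightarrow{\alpha} I_Z]$ attached to a stable pair. I would construct a morphism between the two obstruction theories and verify it is an isomorphism, that is, a quasi-isomorphism of the two-term complexes compatible with the maps to $L_{\svk}$. The tangent side should be transparent: the splitting $T_M \cong \pi_1^* T_{\bp} \oplus \pi_2^* T_{X^{[k]}}$ matches the deformations of $\alpha$ (the $\bp$-direction) and the deformations of $I_Z$, with $T_{[Z]}X^{[k]} = \Hom(I_Z, \oo_Z)$, against the first-order deformations of the pair, the point being that $\ker(ds)$ reproduces these compatibly. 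The obstruction side is where condition~(\ref{tr-def-surj}) should enter: I expect it to be precisely the surjectivity needed to cancel the trace-type contribution that distinguishes the embedding obstruction space from the pair obstruction space, so that the comparison morphism is an isomorphism on $h^1$ as well. Making this identification at the level of objects in the derived category, rather than only on cohomology, via a commutative diagram relating $L_{\svk}$, the cotangent complexes of $\bp$ and $X^{[k]}$, and the deformation complex of the pair, is the main obstacle of the argument.

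Part (b) is then essentially formal once the obstruction theory is known to be the one attached to the section $s$. The induced virtual class is $[\svk]^{\vir} = 0^!_E[C_{\svk/M}]$, the Gysin pullback of the intrinsic normal cone, which embeds in $E|_{\svk}$ because $M$ is smooth. By Fulton's theory of localized top Chern classes together with the identification of the virtual class of a section's obstruction theory with the localized Euler class (Siebert, and \cite{BehFan97}), this class pushes forward to $\iota_*[\svk]^{\vir} = c_{\rk E}(E) \cap [M] = e(E)$, which is the desired identity in $A_{\chi(V^*)-1-(\rk V-2)k}(\bp \times X^{[k]})$.
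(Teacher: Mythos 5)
Your overall skeleton matches the paper's: presenting $\svk$ as the zero locus $Z(\sigma)$ via Proposition~\ref{zero-locus}, the two-term complex $\{E^\vee|_{\svk} \xrightarrow{ds} \Omega_M|_{\svk}\}$ together with its natural map to the truncated cotangent complex (justified by Illusie), the virtual dimension count, and the identification $\iota_*[\svk]^{\vir} = e(E)$ via the basic example of \cite{BehFan97} are exactly what the paper does in \S\ref{pot-vfc}, and those parts of your proposal are correct. The genuine gap is in the comparison half of (a): you say you ``would construct a morphism between the two obstruction theories and verify it is an isomorphism'' and yourself flag this as ``the main obstacle of the argument,'' but you never supply the construction --- and this is precisely where all the real work in the paper lies. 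The missing idea is an \emph{intrinsic} description of the dual complex $\mathcal{E}^{\bullet\vee}$ on the pair side: the paper uses the splitting $R\lHom(I_{\mathscr{Z}},I_{\mathscr{Z}}) \cong \oo \oplus \lHom(I_{\mathscr{Z}},\oo_{\mathscr{Z}})[-1]$ (Lemma~\ref{end-splits}) to split the trace part off $R\pi_{1*}R\lHom(\mathcal{F},\mathcal{F})$, defines a complex $\mathcal{G}^\bullet$ by modifying $R\pi_{1*}R\lHom(\jbullet,\mathcal{F})$ by the truncation $t_{\geq 1}R\pi_{1*}\oo_{\svk\times X}$, and then proves $\mathcal{G}^\bullet \cong \mathcal{E}^{\bullet\vee}$ by an explicit chain-level computation with locally free resolutions, representing $\mathcal{F}$ by $\{\oo_\bp(1) \to \oo_\bp(1)\otimes\oo_{\mathscr{Z}}\}$ (Proposition~\ref{complexes-qis}). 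Without an object like $\mathcal{G}^\bullet$, your plan has nothing concrete to map the zero-locus complex to, so the ``verify it is a quasi-isomorphism'' step cannot even be started.

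A secondary point: your framing of the agreement is stronger than what is available or needed. The ``usual'' theory (\ref{def-obs}) of \cite{Lin18} is given as deformation and obstruction spaces $\Hom(J^\bullet,I_Z)_0$ and $\Ext^1(J^\bullet,I_Z)_0$ at closed points (traceless parts, because the determinant is fixed), not as a global perfect obstruction theory in the derived category of $\svk$; the paper accordingly proves \emph{fiberwise} agreement rather than a quasi-isomorphism over $L_{\svk}$. Concretely, flat base change gives $Lj^*R\pi_{1*}R\lHom(\jbullet,\mathcal{F}) \cong R\Hom(J^\bullet,I_Z)$ at a point $j\colon \{p\}\hookrightarrow \svk$, and condition (\ref{tr-def-surj}) enters exactly where you guessed: it forces the connecting map $H^1(\oo_X) \to H^1(Lj^*\mathcal{G}^\bullet)$ in the resulting long exact sequence to vanish, yielding $H^i(Lj^*\mathcal{G}^\bullet) \cong \Ext^i(J^\bullet,I_Z)_0$ for $i=0,1$ (\S\ref{obs-rhom}). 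So your intuition about the role of (\ref{tr-def-surj}) is right, but in the proposal it remains an expectation; to complete the proof you need the trace-splitting construction of $\mathcal{G}^\bullet$, the chain-level identification with $\mathcal{E}^{\bullet\vee}$, and the base-change argument.
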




For regular surfaces, $H^1(\oo_X)=0$, the condition (\ref{tr-def-surj}) is trivially satisfied.
By the usual deformation-obstruction theory, we mean (\ref{def-obs}), which is similar to that of the Quot scheme.

If we impose an additional condition on $V$, namely that the zero locus of every nonzero cosection $V \to \oo_X$ is 0-dimensional or empty,\footnote{Proposition \ref{P2-good-properties} provides evidence that this is a reasonable condition. This condition also appears in \cite{BerGolJoh16}.} then we get a close relationship between $S_{V,k}$ and the Quot scheme $\Quot(V,k)$ that parametrizes quotients $V \twoheadrightarrow F$ in which $F$ has rank 1, determinant $\oo_X$, and second Chern class $k$. In particular, in the case when $\Quot(V,k)$ is 0-dimensional, we have an isomorphism $S_{V,k} \cong \Quot(V,k)$ (Proposition \ref{svk-equals-quot}), and in this case the Euler class in the theorem computes the length of the Quot scheme
\[
    \# \Quot(V,k) = \int_{X^{[k]}} c_{2k} (V^{*{[k]}}) \tag{$\dagger$}
\]
(Proposition \ref{quot-count}).

We then apply these results to the enumerative problem studied in \cite{BerGolJoh16} for del Pezzo surfaces and pursued further by Johnson in \cite{Joh18}. In the setting when $H^1(\oo_X)=0$ and $\Quot(V,k)$ is expected to be finite, the duals $E$ of the kernels of the maps $V \twoheadrightarrow F$ in the Quot scheme satisfy the orthogonality condition $\chi(X,E \otimes F)=0$, so there is an induced determinant line bundle $\oo_{X^{[k]}}(\Theta_e)$ on $X^{[k]}$ that depends only on $e = \ch(E)$.
As shown in \cite{BerGolJoh16} for del Pezzo surfaces, one expects that if the Quot scheme is indeed finite and if the determinant line bundle is sufficiently positive, then $\# \Quot(V,k) = \chi(X^{[k]}, \oo_{X^{[k]}}(\Theta_e)) = h^0(X^{[k]},\oo_{X^{[k]}}(\Theta_e))$, but the method used in \cite{BerGolJoh16} to compute the length of the Quot scheme was not fully rigorous and was restricted to $k \le 7$. For surfaces $X$ with $\chi(\oo_X)=1$, Johnson checked that
\[
    \int_{X^{[k]}} c_{2k}({V^*}^{[k]}) = \chi(X^{[k]},\oo_{X^{[k]}}(\Theta_e)) \quad \mbox{for\ }k \le 11 \tag{$\ddagger$}
\]
 by comparing universal factorizations of generating series obtained from these numbers, and conjectured that this agreement holds for all $k$ (Conjecture \ref{numbers-match}). Combining ($\dagger$) and ($\ddagger$) will give the desired equality, but we need to ensure that the conditions required for ($\dagger$) hold.

This raises a subtle question: in cases where the expected dimension is 0, is $\Quot(V,k)$ actually finite? This was proven in \cite{BerGolJoh16} for $\bp^2$ when $V$ is a general stable sheaf of rank $r \ge 2$ and the degree of $V$ is sufficiently negative (Theorem \ref{quot-finite}).
Other than the fact that $V$ will in general fail to be locally free when $r = 2$, such $V$ satisfies all of our assumptions, so we obtain

\begin{thm}\label{quot-equals-euler}Assume $1 \le k \le 11$, $r \ge 3$, and $d \gg 0$. Let $V$ be a general stable sheaf on $\bp^2$ of rank $r$, degree $-d$, and second Chern class chosen to ensure that the expected dimension of $\Quot(V,k)$ is 0. Then
\[
    \# \Quot(V,k) = h^0(X^{[k]},\oo_{X^{[k]}}(\Theta_e)).
\]
\end{thm}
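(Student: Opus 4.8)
The plan is to assemble the results quoted in the introduction into the chain of equalities
\begin{align*}
  \# \Quot(V,k)
  &= \int_{X^{[k]}} c_{2k}({V^*}^{[k]}) \\
  &= \chi\bigl(X^{[k]},\oo_{X^{[k]}}(\Theta_e)\bigr)
   = h^0\bigl(X^{[k]},\oo_{X^{[k]}}(\Theta_e)\bigr),
\end{align*}
in which the first equality is ($\dagger$) (Proposition~\ref{quot-count}), the second is Johnson's identity ($\ddagger$) in its verified range $k \le 11$, and the third is a cohomology-vanishing statement. The substance of the argument is to check that the hypotheses behind each link are simultaneously in force for a general stable $V$ of rank $r \ge 3$ and degree $-d$ on $\bp^2$ once $d$ is large.

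First I would verify the standing assumptions. Because $r \ge 3$, a general stable sheaf on $\bp^2$ with these invariants is locally free, so Theorem~\ref{vfc} and Proposition~\ref{quot-count} become available once $V^*$ has no higher cohomology; and since $V^*$ is a stable bundle of degree $d \gg 0$, Serre vanishing (equivalently a Castelnuovo--Mumford regularity estimate) forces $H^1(V^*) = H^2(V^*) = 0$. The cosection condition --- that the zero locus of every nonzero map $V \to \oo_X$ is $0$-dimensional or empty --- is supplied for general $V$ on $\bp^2$ by Proposition~\ref{P2-good-properties}, while Theorem~\ref{quot-finite} guarantees that $\Quot(V,k)$ is finite for such $V$ with $d \gg 0$. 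With these in hand, Proposition~\ref{svk-equals-quot} gives $\svk \cong \Quot(V,k)$ and Proposition~\ref{quot-count} yields the first equality. The second equality is then immediate, since $\chi(\oo_{\bp^2}) = 1$ places us in the range of Johnson's computation ($\ddagger$) for $k \le 11$.

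The step I expect to be the main obstacle is the last equality $\chi = h^0$, that is, the vanishing $H^i\bigl(X^{[k]},\oo_{X^{[k]}}(\Theta_e)\bigr) = 0$ for every $i > 0$ on the smooth $2k$-dimensional variety $X^{[k]}$. Here I would exploit the freedom in $d$: as $d$ grows the Chern character $e = \ch(E)$, and with it the determinant line bundle $\oo_{X^{[k]}}(\Theta_e)$, becomes more positive, so after writing $\oo_{X^{[k]}}(\Theta_e)$ as $K_{X^{[k]}}$ twisted by a line bundle that is ample for $d \gg 0$, Kodaira vanishing annihilates the higher cohomology. The delicate point is to make this positivity quantitative --- controlling the ampleness threshold on $X^{[k]}$ in terms of $d$ --- so that a single choice of $d \gg 0$ simultaneously secures the finiteness of the Quot scheme, the vanishing of the higher cohomology of $V^*$, and the vanishing $H^{>0}(\oo_{X^{[k]}}(\Theta_e)) = 0$. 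Chaining the three equalities then yields the asserted formula.
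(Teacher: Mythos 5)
Your proposal is correct and follows essentially the same route as the paper's proof: the same chain $\#\Quot(V,k) = \int_{X^{[k]}} c_{2k}({V^*}^{[k]}) = \chi(X^{[k]},\oo_{X^{[k]}}(\Theta_e)) = h^0(X^{[k]},\oo_{X^{[k]}}(\Theta_e))$, assembled from Theorem~\ref{quot-finite} (finiteness and reducedness for $d \gg 0$), Proposition~\ref{P2-good-properties} together with \cite[Corollary 5.4]{BerGolJoh16} (which is the actual bridge from the extension construction to a \emph{general stable} $V$, and which also supplies $H^{>0}(V^*)=0$), Proposition~\ref{quot-count}, and Johnson's verified cases $k \le 11$ of Conjecture~\ref{numbers-match}. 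The only divergences are in sourcing rather than substance: the paper quotes the vanishing $H^{>0}(X^{[k]},\oo_{X^{[k]}}(\Theta_e))=0$ for $d \gg 0$ directly from \cite[Corollary 2.3]{BerGolJoh16} instead of re-deriving it by Kodaira vanishing as you sketch, and your appeal to ``Serre vanishing'' for $H^{>0}(V^*)=0$ is not literally applicable since $V^*$ itself changes with $d$ (stability gives $H^2(V^*) \cong H^0(V \otimes \omega_{\bp^2})^* = 0$, but the $H^1$ vanishing comes from the genericity in Proposition~\ref{P2-good-properties}, which you cite anyway), so your conclusion stands.
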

This result improves on \cite[Theorem A]{BerGolJoh16} 
for $\bp^2$ by extending the range of $k$ from $k \le 7$ up to $k \le 11$ and making the count of the points of $\Quot(V,k)$ rigorous (see \S\ref{k-fold-points}).

A few comments about the conditions in the theorem are in order. Finiteness of the Quot scheme is also expected to be true for other del Pezzo surfaces, which would allow the theorem to be generalized beyond $\bp^2$. The condition $k \le 11$ appears because ($\ddagger$) has only been confirmed in that range; this condition can be dropped if Johnson's Conjecture~\ref{numbers-match} can be proved.

This result is interesting for Le Potier's strange duality conjecture because if the kernels produced by the Quot scheme could be shown to be semistable, then it would imply that the strange duality map is injective in these cases.

Finally, as another application of Theorem~\ref{vfc}, we study tautological integrals over $\svk$, obtaining a universality statement (Proposition~\ref{universality}).

During the preparation of our manuscript, we learned about the beautiful arXiv preprint \cite{OprPan19} by Oprea and Pandharipande.
Their paper contained a study similar to ours.
They studied dimension 0 and 1 quotients of a trivial vector bundle over a surface.
In particular, in the case of dimension 0 quotients, they obtained a closed formula for the generating series of the virtual Euler characteristics of the Quot schemes, via the virtual localization formula.

We organize the paper as follows.
In \S2, we review some basic facts about rank 1 sheaves, the Hilbert scheme of points and the moduli space of stable pairs.
In \S3, we show that the moduli space $\svk$ of stable pairs can be embedded into a smooth ambient space as the zero locus of a section.
In \S4, we show that the embedding induces a perfect obstruction theory and a virtual fundamental class.
Under some conditions on $V$, we compare the deformation-obstruction theory given by the embedding with the usual one, completing the proof of Theorem \ref{vfc}.
In \S5, we compare $S_{V,k}$ to the Quot scheme $\Quot(V,k)$ under an additional assumption on $V$. In particular, $S_{V,k}$ and $\Quot(V,k)$ are isomorphic when the Quot scheme is 0-dimensional. In \S6, we review Le Potier's strange duality conjecture and explain the relevance of Quot schemes for strange duality and how \cite{BerGolJoh16} used $k$-fold point formulas in an attempt to count the points of $\Quot(V,k)$. We then use Theorem \ref{vfc} to count the Quot scheme, which allows us to prove Theorem \ref{quot-equals-euler}. Finally, in \S7, we use Theorem~\ref{vfc} to express tautological integrals on $S_{V,k}$ as integrals on $X$, which is a point of independent interest.
\vskip10pt
{\it Acknowledgement.} The authors would like to thank Wanmin Liu and the Institute for Basic Science, Pohang, where the study was initiated. TG is grateful to the Korea Institute for Advanced Study for providing a good research environment, to Drew Johnson for useful discussions, and to Sreedhar Bhamidi and Bumsig Kim for their patient help. YL would like to thank the Korea Institute for Advanced Study. YL also would like to thank Cristina Manolache, Alina Marian, Dragos Oprea, and Baosen Wu for helpful discussions. YL is supported by Grants 2017M620726 and 2018T11083 from China Postdoctoral Science Foundation.

\vskip20pt
\section{Preliminaries}

Let $X$ be a smooth connected projective surface over $\mathbb{C}$.
Let $V$ be a fixed torsion free coherent sheaf on $X$.

\subsection{Sheaves of rank 1}

A rank 1 coherent sheaf on $X$ that is torsion free must be the tensor product of a line bundle and an ideal sheaf $I_Z$ of a 0-dimensional subscheme $Z$. When the rank 1 sheaf $F$ is not necessarily torsion free, an analysis of the possible torsion subsheaves $T$ of $F$ shows that
\begin{lem}[\cite{BerGolJoh16} Lemma 4.11]\label{rank1_classification}
Let $F$ be coherent sheaf on $X$ of rank 1 with trivial determinant and $c_2(F)=k \ge 0$. Then $F$ must be one of the following:
\begin{enumerate}
    \item $I_Z$, where $Z$ has length $k$;
    \item An extension $0 \to T \to F \to I_{Z} \to 0$, where $T$ has 0-dimensional support and $Z$ has length $>k$;
    \item An extension $0 \to T \to F \to \oo_X(-C) \otimes I_Z \to 0$, where $T$ has 1-dimensional support and $C > 0$ is an effective curve.
\end{enumerate}
\end{lem}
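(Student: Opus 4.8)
The plan is to run the canonical torsion filtration of $F$ and then read off the three cases from the determinant and from $c_2$. First I would let $T\subseteq F$ be the maximal torsion subsheaf, giving a short exact sequence
\[
0 \to T \to F \to Q \to 0
\]
with $Q = F/T$ torsion free. Since $F$ has rank $1$ and $T$ has rank $0$, the quotient $Q$ is torsion free of rank $1$ (any torsion in $Q$ would pull back to a torsion subsheaf of $F$ strictly larger than $T$). On a smooth surface such a $Q$ is automatically of the form $L\otimes I_Z$ for a line bundle $L$ and a $0$-dimensional (possibly empty) subscheme $Z$: the reflexive hull $Q^{**}$ is a reflexive rank-$1$ sheaf, hence a line bundle $L$, and the inclusion $Q\hookrightarrow Q^{**}=L$ has cokernel supported in codimension $\ge 2$, so $Q\cong L\otimes I_Z$.

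Next I would extract $L$ from the determinant. Determinants are multiplicative in short exact sequences, so $\oo_X=\det F=\det T\otimes\det Q=\det T\otimes L$. For a torsion sheaf $T$ on a surface, $c_1(T)$ is the class of its (at most $1$-dimensional) support counted with multiplicities, so $\det T=\oo_X(C)$ for a unique effective divisor $C\ge 0$; moreover $C=0$ precisely when $\dim T\le 0$, and $C>0$ precisely when $\dim T=1$. Hence $L=\oo_X(-C)$ and $Q\cong \oo_X(-C)\otimes I_Z$.

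The three cases now correspond to the three possibilities for $T$. If $T=0$, then $C=0$ and $F=Q=I_Z$; from $\ch_2(I_Z)=-\operatorname{length}(Z)$ and $c_1(F)=0$ we get $c_2(F)=\operatorname{length}(Z)=k$, which is case (1). If $T\ne 0$ with $\dim T=0$, then again $C=0$, so $F/T\cong I_Z$, giving case (2); writing $\ell=\operatorname{length}(T)\ge 1$, additivity of $\ch$ gives $\ch_2(F)=\ell-\operatorname{length}(Z)$, hence $c_2(F)=\operatorname{length}(Z)-\ell=k$ and $\operatorname{length}(Z)=k+\ell>k$, as claimed. Finally, if $\dim T=1$, then $C>0$ and $F/T\cong \oo_X(-C)\otimes I_Z$, which is case (3).

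Most of this is routine bookkeeping; the one point requiring care is the determinant of a torsion sheaf, namely the identification $\det T=\oo_X(C)$ with $C\ge 0$ effective together with the dichotomy $C=0\Leftrightarrow\dim T\le 0$. I expect this to be the main (though standard) ingredient, since it is exactly what converts the trivial-determinant hypothesis into the effective curve $C$ controlling the torsion-free quotient, and what cleanly separates cases (2) and (3).
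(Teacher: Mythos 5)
Your proof is correct, and it follows exactly the route the paper alludes to (the paper cites \cite{BerGolJoh16}, Lemma 4.11, and says only that the result follows from ``an analysis of the possible torsion subsheaves $T$ of $F$''): take the torsion filtration $0 \to T \to F \to Q \to 0$, write the torsion-free rank-one quotient as $L \otimes I_Z$, and use $\det T = \oo_X(C)$ with $C \ge 0$ effective and $C = 0$ iff $\dim T \le 0$ to pin down $L = \oo_X(-C)$, with the $c_2$ bookkeeping separating cases (1) and (2). Your identification of the determinant of a torsion sheaf as the main ingredient is also accurate, and your Chern character computation giving $\operatorname{length}(Z) = k + \operatorname{length}(T) > k$ in case (2) is right.
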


Since purity implies torsion freeness for sheaves of positive rank, we see that
\begin{lem}\label{pure-ideal-sheaf}
Let $F$ be a pure coherent sheaf on $X$ of rank 1 with trivial determinant and $c_2(F) = k \ge 0$.
Then $F \cong I_Z$ for $Z$ of length $k$.
\end{lem}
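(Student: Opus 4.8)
The plan is to read off the result directly from the classification in Lemma~\ref{rank1_classification} by eliminating all but the first of its three cases using the purity hypothesis. First I would recall the relevant dimension bookkeeping: a coherent sheaf of positive rank on the surface $X$ has support all of $X$, hence is of dimension $2$. By definition, such a sheaf is pure (of dimension $2$) exactly when it admits no nonzero subsheaf whose support has dimension strictly less than $2$. In particular, for a rank $1$ sheaf this is precisely the condition of being torsion free, since any torsion subsheaf is supported in dimension $\le 1$.

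With that observation in place, I would apply Lemma~\ref{rank1_classification} to $F$, which (having rank $1$, trivial determinant, and $c_2(F) = k \ge 0$) must fall into one of the three listed types. In case (2) the sheaf $F$ contains a nonzero torsion subsheaf $T$ with $0$-dimensional support, and in case (3) it contains a nonzero torsion subsheaf $T$ with $1$-dimensional support; in either case $T$ is a nonzero subsheaf of dimension strictly less than $2$, contradicting the purity of $F$. Therefore neither (2) nor (3) can occur, and only case (1) survives, yielding $F \cong I_Z$ with $Z$ of length $k$, as claimed.

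The argument is essentially immediate given the classification, so there is no substantial obstacle to overcome; the only point requiring genuine (though routine) care is the equivalence, for a rank $1$ sheaf on the surface, between purity and torsion-freeness, which is exactly the reduction that lets the two bad cases be discarded. I would therefore keep the write-up short, spending a sentence on that equivalence and then simply citing the two torsion subsheaves $T$ in cases (2) and (3) as the violation of purity.
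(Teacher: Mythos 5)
Your proposal is correct and matches the paper's own (very brief) argument: the paper derives the lemma from Lemma~\ref{rank1_classification} via exactly the observation that purity implies torsion freeness for sheaves of positive rank, which rules out cases (2) and (3). The only point you state without spelling out is that $T \ne 0$ in case (2), but this is immediate since length$(Z) > k = c_2(F)$ forces $T$ to be nontrivial.
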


\subsection{Hilbert scheme of points}\label{hilb}
The Hilbert scheme of points on $X$, denoted $X^{[k]}$, parametrizes 0-dimensional subschemes of length $k$ on the surface $X$. It is an example of a Quot scheme. In contrast to general Quot schemes, the Hilbert scheme $X^{[k]}$ of points on a surface has particularly nice geometry.
In particular, $X^{[k]}$ is nonsingular of dimension $2k$ \cite{Fog68}.
The study of Hilbert schemes of points on surfaces is a vast field.
For an introduction, see the beautiful book \cite{Nak99}.

There is a universal closed subscheme $\mathscr{Z} \subset X^{[k]} \times X$ and a universal short exact sequence
\[
    0 \to I_{\mathscr{Z}} \to \oo_{X^{[k]} \times X} \to \oo_{\mathscr{Z}} \to 0.
\]
In particular, $\mathscr{Z}$ is flat and finite over $X^{[k]}$.
For a subscheme $Z\subset X$, the deformation space and the obstruction space are $\Hom(I_Z,\oo_Z)$ and $\Ext^1(I_Z,\oo_Z)$ respectively.
Here, $I_Z$ denotes the ideal sheaf and $\oo_Z$ denotes the structure sheaf.
Recall that the deformation space is naturally isomorphic to the Zariski tangent space.
There is a natural way to construct tautological sheaves on $X^{[k]}$ using $\oo_{\mathscr{Z}}$ as the kernel for an integral transform.
If $E$ is a locally free coherent sheaf on $X$, then we define
\[
    E^{[k]} := {\pi_1}_* (\pi_2^* E \otimes \oo_{\mathscr{Z}}),
\]
where $\pi_i$ are the projection maps on $X^{[k]} \times X$.
The fiber of $E^{[k]}$ at the point $Z \in X^{[k]}$ is $H^0(X,E \otimes \oo_Z)$, so the rank of the locally free sheaf $E^{[k]}$ is $k$ times the rank of $E$.

\subsection{Trace maps and calculations in the derived category}\label{trace-maps}



We begin by mentioning a few useful facts about the derived category and then define the trace maps. For this part, see \cite{Art88, HuyLeh10}.

First, given a bounded complex of coherent sheaves $F^{\bullet}$ on a proper algebraic scheme $Y$, the hypercohomology groups $\mathbb{H}^i(F^{\bullet}) = R^i \Gamma(F^{\bullet})$ can be calculated by the spectral sequence
\[
    E_2^{i,j} = H^i(Y,\mathcal{H}^j(F^{\bullet})) \Rightarrow \mathbb{H}^{i+j}(F^{\bullet}),
\]
where $\mathcal{H}^j(F^{\bullet})$ denotes the $j$th cohomology sheaf of $F^{\bullet}$. A chain map between two such complexes induces a linear map between the hypercohomology groups.

Second, if $E^{\bullet}$ and $F^{\bullet}$ are finite complexes of coherent sheaves, consider the complex
\[
    \lHom^\bullet(E^\bullet, F^\bullet)
\]
defined by $\lHom^n(E^\bullet, F^\bullet)=\oplus_i \lHom(E^i,F^{i+n})$ and differential $d(\phi)=d_F\circ \phi - (-1)^{\deg \phi}\cdot \phi \circ d_E$. When $E^{\bullet}$ is a locally free resolution of a sheaf $E$ and $F^{\bullet}$ is an arbitrary resolution of a sheaf $F$, $\lHom^\bullet(E^\bullet, F^\bullet)$ is isomorphic to $R\lHom(E,F)$, and the spectral sequence mentioned above takes the following form: 
\[
    E_2^{i,j} = H^i(Y,\lExt^j(E,F)) \Rightarrow \Ext^{i+j}(E,F).
\]

Now, if $F$ is a non-trivial locally free coherent sheaf, there is a trace map $\tr_F\colon \lHom(F,F)\to \oo_Y$ and an inclusion $i_F\colon \oo_Y\to \lHom(F,F)$ sending $1$ to $\id_F$. 
Because $\tr_F\circ i_F=\rk F$, these maps induce a splitting of $\lHom(F,F)$.
By taking cohomologies of $\tr_F$, we obtain a trace map between cohomology groups $\tr_F\colon \Ext^i(F,F)\to H^i(\oo_Y)$
by abuse of notation.
Let $\Ext^i(F,F)_0$ denote its kernel, which is called the {\it traceless part}.
The splitting above also provides the splitting $\Ext^i(F,F)\cong \Ext^i(F,F)_0\oplus H^i(\oo_Y).$

For a general coherent sheaf $F$ with a finite locally free resolution $F^\bullet$, there are similar statements.
The complex $\lHom^\bullet(F^\bullet, F^\bullet)$, which represents $R\lHom(F,F)$, has a trace map
\begin{equation*}
    \tr_{F^\bullet}: \lHom^\bullet(F^\bullet, F^\bullet) \to \oo_Y
\end{equation*}
defined by setting $\tr_{F^\bullet}|_{\mathcal{E}nd(F^i)}=(-1)^i\tr_{F^i}$ and $\tr_{F^\bullet}|_{\lHom(F^i, F^j)}=0$ whenever $i\ne j$.
We obtain a trace map
\[
    \tr_F \colon \Ext^i(F,F) \to H^i(\oo_Y)
\]
as the induced map $\mathbb{H}^i(\tr_{F^{\bullet}})$. There is an inclusion of complexes
\begin{equation*}
    i_{F^\bullet}\colon \oo_Y \to \lHom^\bullet(F^\bullet, F^\bullet)
\end{equation*}
by sending $1$ to $\sum_i\id_{F^i}$.
Let $\rk F^\bullet=\sum_i (-1)^i\rk F^i$. 
Then, $\tr_{F^\bullet}\circ i_{F^\bullet}=\rk F^\bullet$.
When $F$ has positive rank, i.e. $\rk F^\bullet\not=0$, let $R\lHom(F,F)_0$ denote the kernel of $\tr_{F^\bullet}$. 
Then, combining $i_{F^{\bullet}}$ and the inclusion of this kernel gives a splitting
\begin{equation*}
     \oo_Y \oplus R\lHom(F,F)_0 \xrightarrow{\cong} \lHom^\bullet(F^\bullet, F^\bullet).
\end{equation*}
Applying $\mathbb{H}^i$ yields a splitting
\begin{equation*}
  H^i(\oo_Y) \oplus \Ext^i(F,F)_0 \xrightarrow{\cong} \Ext^i(F,F).
\end{equation*}
in which the traceless part $\Ext^i(F,F)_0$ is identified with $\mathbb{H}^i(R \lHom(F,F)_0)$.

A precise calculation of these splittings in the case of the universal ideal sheaf $I_{\mathscr{Z}}$ on $X^{[k]} \times X$ will be important for us.

\begin{lem}\label{end-splits}
$R\lHom(I_{\mathscr{Z}},I_{\mathscr{Z}}) \cong \oo \oplus \lHom(I_{\mathscr{Z}},\oo_{\mathscr{Z}})[-1]$.
\end{lem}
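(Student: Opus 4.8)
The plan is to peel off the trivial summand with the trace map and then recognize the remainder as a single sheaf placed in cohomological degree $1$. Since $I_{\mathscr{Z}}$ has rank $1$, the trace $\tr_{I_{\mathscr{Z}}}\colon R\lHom(I_{\mathscr{Z}},I_{\mathscr{Z}}) \to \oo$ and the identity section $\id\colon\oo \to R\lHom(I_{\mathscr{Z}},I_{\mathscr{Z}})$ satisfy $\tr\circ\id = \rk I_{\mathscr{Z}} = 1$, so the formalism of \S\ref{trace-maps} gives a splitting $R\lHom(I_{\mathscr{Z}},I_{\mathscr{Z}}) \cong \oo \oplus R\lHom(I_{\mathscr{Z}},I_{\mathscr{Z}})_0$. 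It then remains to prove $R\lHom(I_{\mathscr{Z}},I_{\mathscr{Z}})_0 \cong \lHom(I_{\mathscr{Z}},\oo_{\mathscr{Z}})[-1]$.

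Next I would bound the amplitude of the complex. Because $\mathscr{Z}$ is finite and flat over the smooth variety $X^{[k]}$ with $0$-dimensional fibers, it is Cohen--Macaulay of pure codimension $2$ in $X^{[k]}\times X$; by Auslander--Buchsbaum (equivalently, the Hilbert--Burch structure of codimension-$2$ Cohen--Macaulay ideals) the sheaf $I_{\mathscr{Z}}$ has projective dimension $\le 1$ locally, so $\lExt^j(I_{\mathscr{Z}},-)=0$ for all $j\ge 2$. Moreover $I_{\mathscr{Z}}$ is torsion free of rank $1$ on the normal variety $X^{[k]}\times X$, so $\lHom(I_{\mathscr{Z}},I_{\mathscr{Z}})=\oo$, and under the trace this is exactly the split summand. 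Hence $R\lHom(I_{\mathscr{Z}},I_{\mathscr{Z}})_0$ has trivial cohomology in degree $0$ and in all degrees $\ge 2$, so it is quasi-isomorphic to $\lExt^1(I_{\mathscr{Z}},I_{\mathscr{Z}})[-1]$. The problem is thereby reduced to a natural isomorphism of sheaves $\lExt^1(I_{\mathscr{Z}},I_{\mathscr{Z}}) \cong \lHom(I_{\mathscr{Z}},\oo_{\mathscr{Z}})$.

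To compute $\lExt^1(I_{\mathscr{Z}},I_{\mathscr{Z}})$ I would apply $R\lHom(-,I_{\mathscr{Z}})$ to the universal sequence $0\to I_{\mathscr{Z}}\to\oo\to\oo_{\mathscr{Z}}\to 0$. Using $\lHom(\oo_{\mathscr{Z}},I_{\mathscr{Z}})=0$ (no maps from a torsion sheaf to a torsion-free one), $R\lHom(\oo,I_{\mathscr{Z}})=I_{\mathscr{Z}}$, and the amplitude bound above, the long exact sequence of $\lExt$ sheaves collapses to a clean isomorphism $\lExt^1(I_{\mathscr{Z}},I_{\mathscr{Z}})\cong\lExt^2(\oo_{\mathscr{Z}},I_{\mathscr{Z}})$ (and, incidentally, $\lExt^1(\oo_{\mathscr{Z}},I_{\mathscr{Z}})\cong\oo_{\mathscr{Z}}$).

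The hard part is the final identification $\lExt^2(\oo_{\mathscr{Z}},I_{\mathscr{Z}})\cong\lHom(I_{\mathscr{Z}},\oo_{\mathscr{Z}})$, which I would obtain from Grothendieck--Serre duality for the codimension-$2$ Cohen--Macaulay embedding $\iota\colon\mathscr{Z}\hookrightarrow X^{[k]}\times X$ (equivalently, relative duality for the finite flat family $\mathscr{Z}\to X^{[k]}$). The delicate point is the twist by the dualizing sheaf: since $\iota^{!}\oo = \omega[-2]$ with $\omega=\lExt^2(\oo_{\mathscr{Z}},\oo)$, one gets $\lExt^2(\oo_{\mathscr{Z}},I_{\mathscr{Z}})\cong\iota_*(\iota^*I_{\mathscr{Z}}\otimes\omega)$, i.e. the conormal sheaf $I_{\mathscr{Z}}/I_{\mathscr{Z}}^2$ twisted by $\omega$, whereas $\lHom(I_{\mathscr{Z}},\oo_{\mathscr{Z}})$ is its $\oo_{\mathscr{Z}}$-dual; the content of the lemma is that these coincide. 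This is already nontrivial when $k=1$, where $\mathscr{Z}\subset X\times X$ is the diagonal $\Delta$: a naive local Koszul computation identifies $\lExt^1(I_{\mathscr{Z}},I_{\mathscr{Z}})$ with the conormal bundle $\Omega_X$, while $\lHom(I_{\mathscr{Z}},\oo_{\mathscr{Z}})$ is the normal bundle $T_X$, and the two are reconciled precisely by the determinant twist $\Omega_X\otimes\det N_\Delta\cong T_X$. I expect that carefully tracking this dualizing-sheaf twist, for the generally non-Gorenstein $\mathscr{Z}$, is the main obstacle; it is cleanest to argue functorially through $\iota^{!}$, so that the twist is incorporated automatically rather than recovered by hand in local coordinates.
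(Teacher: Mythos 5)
Your reductions are correct as far as they go, but they stop short of a proof of the crucial point, and you say so yourself. The trace splitting is fine ($I_{\mathscr{Z}}$ has rank one and admits a finite locally free resolution on the smooth ambient space), the amplitude bound is fine ($\mathscr{Z}$ is finite and flat over the smooth $X^{[k]}$, hence Cohen--Macaulay of codimension two, so $I_{\mathscr{Z}}$ locally has projective dimension one), $\lHom(I_{\mathscr{Z}},I_{\mathscr{Z}})\cong\oo$ is fine, and the contravariant long exact sequence does give $\lExt^1(I_{\mathscr{Z}},I_{\mathscr{Z}})\cong\lExt^2(\oo_{\mathscr{Z}},I_{\mathscr{Z}})$; the duality step $\lExt^2(\oo_{\mathscr{Z}},I_{\mathscr{Z}})\cong\iota_*(\iota^*I_{\mathscr{Z}}\otimes\omega)$ is also correct, though it silently uses that $I_{\mathscr{Z}}$ is perfect (so that $\iota^!I_{\mathscr{Z}}\cong L\iota^*I_{\mathscr{Z}}\otimes^L\omega[-2]$, with the higher Tor terms only affecting $\lExt^{\le 1}$). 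But at that point you declare that identifying $(I_{\mathscr{Z}}/I_{\mathscr{Z}}^2)\otimes\omega$ with $\lHom_{\oo_{\mathscr{Z}}}(I_{\mathscr{Z}}/I_{\mathscr{Z}}^2,\oo_{\mathscr{Z}})$ ``is the content of the lemma,'' and you never prove it. This is a genuine gap, not bookkeeping, and your proposed remedy --- ``argue functorially through $\iota^!$ so that the twist is incorporated automatically'' --- cannot close it: $\iota^!$ automatically produces the $\omega$-twisted conormal sheaf $M\otimes\omega$, and no functoriality converts $M\otimes\omega$ into $\lHom_{\oo_{\mathscr{Z}}}(M,\oo_{\mathscr{Z}})$ for $M=I_{\mathscr{Z}}/I_{\mathscr{Z}}^2$. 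Since $\mathscr{Z}$ is in general neither lci nor Gorenstein, neither $I_{\mathscr{Z}}/I_{\mathscr{Z}}^2$ nor $\omega$ is locally free along the bad locus, and for a general module over a Cohen--Macaulay ring these two operations give different modules; the coincidence here is a special structural fact about codimension-two Cohen--Macaulay ideals (provable, e.g., by comparing the two sides through a Hilbert--Burch presentation $0\to F\to G\to I_{\mathscr{Z}}\to 0$) and needs its own argument. Your $k=1$ check on the diagonal is exactly the locally free (lci) case, where this subtlety is invisible.

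The paper's proof avoids duality entirely and is worth contrasting with your route: applying $\lHom(I_{\mathscr{Z}},-)$ \emph{covariantly} to the universal sequence (rather than $\lHom(-,I_{\mathscr{Z}})$ contravariantly), it represents the map $R\lHom(I_{\mathscr{Z}},\oo)\to R\lHom(I_{\mathscr{Z}},\oo_{\mathscr{Z}})$ by restricting the dual of a two-step locally free resolution $F^{\bullet}\to I_{\mathscr{Z}}$ to $\mathscr{Z}$; the induced map on kernels vanishes because the cosection $F^0\to\oo$ has image $I_{\mathscr{Z}}$ and hence restricts to zero on $\mathscr{Z}$, while the induced map on cokernels is an isomorphism because $\operatorname{coker} d\cong\lExt^1(I_{\mathscr{Z}},\oo)$ is an $\oo_{\mathscr{Z}}$-module and restriction is right exact. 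This gives $\lExt^1(I_{\mathscr{Z}},I_{\mathscr{Z}})\cong\lHom(I_{\mathscr{Z}},\oo_{\mathscr{Z}})$ directly, with no dualizing sheaf appearing, and the splitting is then exhibited by an explicit chain map from the two-term complex $\{\ker d\xrightarrow{0}\ker\bar{d}\}$ into $\lHom^{\bullet}(F^{\bullet},\{\oo\to\oo_{\mathscr{Z}}\})$. The cheapest fix for your write-up is to replace your duality endgame with this covariant resolution computation; note also that the paper later reuses the explicit chain-level form of this splitting in the proof of Proposition~\ref{complexes-qis}, which a purely abstract duality isomorphism would not supply.
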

\begin{proof}
Because of the short exact sequence $0 \to I_Z \to \oo_X \to \oo_Z \to 0$, there is a distinguished triangle
\[
    R\lHom(I_{\mathscr{Z}},I_{\mathscr{Z}}) \to R\lHom(I_{\mathscr{Z}},\oo) \xrightarrow{\beta} R\lHom(I_{\mathscr{Z}},\oo_{\mathscr{Z}}) \to.
\]
We will compute the long exact sequence of cohomology.

Let $F^{\bullet} \to I_{\mathscr{Z}}$ be a two-step locally free resolution of $I_{\mathscr{Z}}$. Setting $F_i = (F^i)^*$, $\beta$ can be represented by the chain map
\[\xymatrix{
    \lHom^{\bullet}(F^{\bullet},\oo) \ar@{=}[r] & \{\, F_0 \ar[r]^{d} \ar[d] & F_{-1} \ar[d] \,\} \\
    \lHom^{\bullet}(F^{\bullet},\oo_{\mathscr{Z}}) \ar@{=}[r] & \{\, F_0 \otimes \oo_{\mathscr{Z}} \ar[r]^{\bar{d}} & F_{-1} \otimes \oo_{\mathscr{Z}} \,\}
}\]
The kernel of $d$ is $\lHom(I_{\mathscr{Z}},\oo) \cong \oo$ and the cokernel of $d$ is supported on $\mathscr{Z}$. Since the inclusion $\oo \hookrightarrow F_0$ vanishes on $\mathscr{Z}$, the induced map $\ker d \to \ker \bar{d}$ is 0. Since $\operatorname{coker} d$ is supported on $\mathscr{Z}$ and restriction is right-exact, the induced map $\operatorname{coker} d \to \operatorname{coker} \bar{d}$ is an isomorphism. Thus we see that \[
    \lHom(I_{\mathscr{Z}},I_{\mathscr{Z}}) \cong \lHom(I_{\mathscr{Z}},\oo) \cong \oo, \quad \lExt^1(I_{\mathscr{Z}},I_{\mathscr{Z}}) \cong \lHom(I_{\mathscr{Z}},\oo_{\mathscr{Z}}), \quad \lExt^2(I_{\mathscr{Z}},I_{\mathscr{Z}}) = 0.
\]

Now, representing $I_{\mathscr{Z}}$ by the complex
\[
    \mathcal{A}^{\bullet} = \{\, \oo \to \oo_{\mathscr{Z}} \,\}
\]
in degrees 0 and 1, there is a chain map
\[\xymatrix{           
    & \{ \, \ker d \ar@{^(->}[d] \ar[r]^-0 & \ker \bar{d} \, \} \ar@{^(->}[d] \\
    \lHom^{\bullet}(F^{\bullet},\mathcal{A}^{\bullet}) \ar@{=}[r] & \{ \, F_0 \ar[r] & (F_0 \otimes \oo_{\mathscr{Z}}) \oplus F_{-1} \ar[r] & F_{-1} \otimes \oo_{\mathscr{Z}} \, \}
}\]
that induces isomorphisms in cohomology. As $\lHom^{\bullet}(F^{\bullet},\mathcal{A}^{\bullet})$ represents $R\Hom(I_{\mathscr{Z}},I_{\mathscr{Z}})$ and $\ker d \cong \lHom(I_{\mathscr{Z}},\oo)$ and $\ker \bar{d} \cong \lHom(I_{\mathscr{Z}},\oo_{\mathscr{Z}})$ by construction, the proof is complete.
\end{proof}


Furthermore, we the following description of the traceless part.
\begin{lem}\label{diff-tr}
  The following composition is zero:
  \begin{equation*}
      \Hom(I_Z,\oo_Z) \to \Ext^1(I_Z,I_Z) \xrightarrowdbl{\tr} H^1(\oo_X).
  \end{equation*}
Moreover, $\Hom(I_Z,\oo_Z) \cong \Ext^1(I_Z,I_Z)_0$.
\end{lem}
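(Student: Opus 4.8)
The plan is to identify the first arrow as the connecting homomorphism of a long exact $\Ext$ sequence, to prove the vanishing by restricting everything to the complement of $Z$, and to deduce the isomorphism from Lemma~\ref{end-splits}. First I would fix the short exact sequence $0 \to I_Z \xrightarrow{j} \oo_X \xrightarrow{q} \oo_Z \to 0$ and apply $\Hom(I_Z,-)$ to obtain
\[
    \Hom(I_Z,\oo_X) \xrightarrow{q_*} \Hom(I_Z,\oo_Z) \xrightarrow{\delta} \Ext^1(I_Z,I_Z) \xrightarrow{j_*} \Ext^1(I_Z,\oo_X).
\]
The first map in the statement is exactly this connecting homomorphism $\delta$ (the natural deformation-theoretic comparison sending a deformation of $Z$ to the induced deformation of $I_Z$), and exactness gives $\im \delta = \ker j_*$. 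Thus it suffices to show that $\tr$ annihilates $\ker j_*$.

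The heart of the argument is restriction to $U := X \setminus Z$, where $I_Z|_U = \oo_U$. Since the trace is defined locally and reduces to the identity on the rank-one free sheaf $\oo_U$, the restriction map $\rho_U \colon \Ext^1(I_Z,I_Z) \to \Ext^1(\oo_U,\oo_U) = H^1(U,\oo_U)$ fits into a commuting square with $\tr$, so that $\rho_U = r \circ \tr$, where $r \colon H^1(\oo_X) \to H^1(U,\oo_U)$ is restriction. Because $Z$ has codimension $2$ in the smooth (hence Cohen--Macaulay) surface $X$, we have $H^0_Z(\oo_X) = H^1_Z(\oo_X) = 0$, so the local cohomology sequence shows that $r$ is injective. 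On the other hand, since $j|_U = \id_{\oo_U}$, the class $j_*\eta$ restricts on $U$ to $\rho_U(\eta)$; hence if $\eta \in \ker j_*$, then $\rho_U(\eta) = 0$, so $r(\tr \eta) = 0$, and injectivity of $r$ forces $\tr \eta = 0$. This proves that the composition is zero.

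For the isomorphism I would invoke Lemma~\ref{end-splits}, whose proof applies verbatim to a single $0$-dimensional $Z$, giving $R\lHom(I_Z,I_Z) \cong \oo_X \oplus \lHom(I_Z,\oo_Z)[-1]$. The $\oo_X$ summand is the image of $i_{F^\bullet}$, i.e.\ the copy of $\lHom(I_Z,I_Z) = \lExt^0$ generated by the identity, so this decomposition is exactly the trace splitting and $R\lHom(I_Z,I_Z)_0 \cong \lHom(I_Z,\oo_Z)[-1]$. Taking $\mathbb{H}^1$ then yields $\Ext^1(I_Z,I_Z)_0 \cong H^0(\lHom(I_Z,\oo_Z)) = \Hom(I_Z,\oo_Z)$. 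To see that $\delta$ itself realizes this isomorphism, I would note that $\Hom(I_Z,\oo_X) \cong H^0(\oo_X) = \bc$ is generated by $j$ and $q_* j = q \circ j = 0$, whence $\delta$ is injective; combined with the vanishing (so $\im\delta \subseteq \Ext^1(I_Z,I_Z)_0$) and the dimension count just obtained, $\delta$ is an isomorphism onto the traceless part.

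I expect the main obstacle to be the trace bookkeeping in the vanishing step: justifying cleanly that the global trace commutes with restriction to $U$ and there reduces to the identity, and that codimension $2$ makes $r$ genuinely injective. A secondary point needing care is the identification of the Lemma~\ref{end-splits} decomposition with the trace splitting, which amounts to checking that the two copies of $\oo_X$ (the one generated by $\id$ and the image of $i_{F^\bullet}$) coincide; everything else is formal.
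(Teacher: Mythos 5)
Your proof is correct, but it takes a partly different route from the paper's. The paper deduces both claims at once from Lemma~\ref{end-splits}: since the sheaves there are flat over $X^{[k]}$, the universal splitting restricts to fibers, giving $R\lHom(I_Z,I_Z) \cong \oo_X \oplus \lHom(I_Z,\oo_Z)[-1]$, and applying $\mathbb{H}^1$ identifies $\Hom(I_Z,\oo_Z)$ with the traceless part --- the vanishing of the composition is then implicit in the splitting rather than argued separately. You instead prove the vanishing directly and geometrically: identifying the first map as the connecting homomorphism $\delta$, using that trace commutes with restriction to $U = X \setminus Z$ where $I_Z|_U \cong \oo_U$, and invoking codimension-$2$ local cohomology ($H^0_Z(\oo_X)=H^1_Z(\oo_X)=0$) to see that $H^1(\oo_X) \to H^1(U,\oo_U)$ is injective. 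This is more elementary and self-contained (no universal family needed for the first claim), and your closing argument --- $\Hom(I_Z,\oo_X) = \bc \cdot j$ with $q \circ j = 0$, so $\delta$ is injective, hence an isomorphism onto the traceless part by the dimension count --- makes explicit which map realizes the isomorphism, a point the paper's terse proof leaves implicit. The one step you flag as delicate, that the Lemma~\ref{end-splits} decomposition is the trace splitting, is needed by both proofs and can be closed formally: any map $\lHom(I_Z,\oo_Z)[-1] \to \oo_X$ in the derived category lies in $\Ext^1(\lHom(I_Z,\oo_Z),\oo_X)$, which vanishes because $\lHom(I_Z,\oo_Z)$ is supported in codimension $2$ (its $\lExt^q(-,\oo_X)$ vanish for $q<2$), so the trace automatically annihilates that summand and the two copies of $\oo_X$ must coincide.
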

It is clear by viewing them as tangent maps between moduli spaces. But we provide a direct proof.

\begin{proof} As the sheaves in Lemma \ref{end-splits} are flat over $X^{[k]}$, we get an analogous splitting
\[
    R\lHom(I_Z,I_Z) \cong \oo_X \oplus \lHom(I_Z,\oo_Z)[-1]
\]
for each $Z \in X^{[k]}$. Applying $\mathbb{H}^1$ to both sides yields a splitting
\[
    \Ext^1(I_Z,I_Z) \cong H^1(\oo_X) \oplus H^0(\lHom(I_Z,\oo_Z))
\]
identifying $\Hom(I_Z,\oo_Z) \cong H^0(\lHom(I_Z,\oo_Z))$ with the traceless part $\Ext^1(I_Z,I_Z)_0$.
\end{proof}

\subsection{Stable pairs}\label{intro-stable-pairs}
Fix an ample line bundle $\oo_X(1)$ on $X$.
A pair $(F,\alpha \colon V \to F)$ consisting of a coherent sheaf $F$ on $X$ and a morphism $V \to F$ is \emph{(limit) stable} if $F$ is pure and the dimension of the cokernel of $\alpha$ is strictly smaller than the dimension of $F$.
Two pairs $(F,\alpha \colon V \to F)$ and $(F',\alpha' \colon V \to F')$ are \emph{equivalent} if there is an isomorphism $\beta \colon F \to F'$ such that $\beta \circ \alpha = \alpha'$.
If we fix the rank, determinant, and second Chern class of $F$, then there is a projective fine moduli space $S$ parametrizing equivalent classes of stable pairs with these fixed invariants \cite[Theorem 1.1]{Lin18}.
In particular, there is a universal family on $S \times X$
\begin{equation*}
    \tilde{\alpha}\colon\pi_2^* V\to \mathcal{F}.
\end{equation*}

For a stable pair $(F,\alpha\colon V\to F)$, let $J^\bullet$ denote the complex $\{V\xrightarrow{\alpha} F\}$ where $V$ is in degree $0$ and $F$ is in degree $1$. There is a distinguished triangle
\begin{eqnarray}
\label{str-seq} J^\bullet \to V\xrightarrow{\alpha} F\to.
\end{eqnarray}
If we fix only the Hilbert polynomial of $F$ without fixing the determinant, the deformation space and the obstruction space are $\Hom(J^\bullet, F)$ and $\Ext^1(J^\bullet, F)$ respectively. But we are fixing the determinant, so we shall take the traceless part. It means the following. There are natural maps
\begin{equation*}
    \Ext^{i-1}(J^\bullet, F) \to \Ext^{i}(F,F) \xrightarrow{\rm tr} H^i(\oo_X).
\end{equation*}
We denote the kernel of the composition above by $\Ext^{i-1}(J^\bullet, F)_0$. Then, the deformation space and the obstruction space are
\begin{equation}\label{def-obs}
\Hom(J^\bullet, F)_0 \quad\mbox{and}\quad \Ext^1(J^\bullet, F)_0,
\end{equation}
respectively.

\begin{lem}The cohomology group $\Ext^{-1}(J^{\bullet}, F)$ is zero.
\end{lem}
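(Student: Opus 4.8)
The plan is to read off $\Ext^{-1}(J^{\bullet}, F)$ from the distinguished triangle (\ref{str-seq}) by applying $R\lHom(-, F)$ (equivalently, $R\Hom(-,F)$ after taking global sections). Since this functor is contravariant, it sends (\ref{str-seq}) to a distinguished triangle whose associated long exact sequence reads
\[
\cdots \to \Ext^{i-1}(J^{\bullet}, F) \to \Ext^{i}(F, F) \to \Ext^{i}(V, F) \to \Ext^{i}(J^{\bullet}, F) \to \cdots.
\]
Both $V$ and $F$ are genuine sheaves sitting in degree $0$, so $\Ext^{j}(V, F) = \Ext^{j}(F, F) = 0$ for $j < 0$. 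Extracting the relevant terms, the sequence collapses to
\[
0 \to \Ext^{-1}(J^{\bullet}, F) \to \Hom(F, F) \xrightarrow{\,-\circ\alpha\,} \Hom(V, F),
\]
where the right-hand map is precomposition with $\alpha$. Thus the first step identifies $\Ext^{-1}(J^{\bullet}, F)$ with the space of endomorphisms $\phi \colon F \to F$ satisfying $\phi \circ \alpha = 0$, and it remains to show that this space vanishes.

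This is exactly where stability enters. If $\phi \circ \alpha = 0$, then $\phi$ annihilates $\im\alpha$ and therefore factors through the quotient $F \twoheadrightarrow \operatorname{coker}\alpha$; consequently $\im\phi$ is a quotient of $\operatorname{coker}\alpha$, and in particular it is supported on $\operatorname{Supp}(\operatorname{coker}\alpha)$. By the definition of a stable pair, $\dim\operatorname{coker}\alpha < \dim F$, so $\im\phi$ is a subsheaf of $F$ whose dimension is strictly smaller than $\dim F$. But $F$ is pure, hence has no nonzero subsheaf of smaller dimension, forcing $\im\phi = 0$, i.e. $\phi = 0$. This yields $\Ext^{-1}(J^{\bullet}, F) = 0$.

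There is no genuine obstacle here: the only content beyond bookkeeping is the final step, which uses both halves of the stability condition together — the purity of $F$ to rule out low-dimensional subsheaves, and the dimension drop of $\operatorname{coker}\alpha$ to place $\im\phi$ among them. I would take care only with the indexing and the direction of the long exact sequence, since the contravariance of $R\lHom(-,F)$ shifts $J^{\bullet}$ relative to $V$ and $F$.
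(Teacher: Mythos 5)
Your proof is correct and follows essentially the same route as the paper: both identify $\Ext^{-1}(J^{\bullet},F)$ with the kernel of $-\circ\alpha\colon \Hom(F,F)\to\Hom(V,F)$ and kill it by factoring any such $\phi$ through $\operatorname{coker}\alpha$. The paper phrases the final step as $\Hom(Q,F)=0$ because $Q=\operatorname{coker}\alpha$ is torsion and $F$ is torsion free, which is just the positive-rank specialization of your purity-and-dimension argument.
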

\begin{proof}
Let $Q$ be the cokernel of $\alpha$.
Because $Q$ is a torsion sheaf and $F$ is torsion free, $\Hom(Q,F)=0$.
Hence, the morphism $\Hom(F,F)\to \Hom(V,F)$ is injective.
Therefore, $\Ext^{-1}(J^{\bullet}, F)=0$.
\end{proof}

Applying the functor $\Hom(-, F)$ to (\ref{str-seq}), we have the long exact sequence
\begin{equation}\label{def-str-seq}
\begin{array}{ccccccccc}
          0 & \to   & \Hom(F,F) & \xrightarrow{\circ \alpha} & \Hom(V,F)&\to & \Hom(J^{\bullet},F) \\
          & \to   & \Ext^1(F,F) &\to& \Ext^1(V, F)&\to &\Ext^1(J^{\bullet},F)\\
          & \to    & \Ext^2(F,F)&\to&\Ext^2(V, F)&\to &\Ext^2(J^{\bullet}, F) & \to & 0.
\end{array}
\end{equation}

In this paper, we focus on the case where $\rk F = 1$, $\det F \cong \oo_X$, and $c_2(F) = k \ge 0$.
Since $F$ is pure, Lemma \ref{pure-ideal-sheaf} implies that $F$ is an ideal sheaf $I_Z$ of a 0-dimensional subscheme $Z \subset X$ of length $k$.

\vskip20pt
\section{Embedding of $S_{V,k}$ into a smooth ambient space}

Let $X$ be a smooth connected complex projective surface and $V$ be a fixed torsion free coherent sheaf on $X$.
Let $S_{V,k}$ denote the moduli space of stable pairs $(I_Z,\alpha \colon V \to I_Z)$, where $Z$ is a 0-dimensional subscheme of $X$ of length $k$.

In this section, we show that $\svk$ can be embedded into a smooth ambient space.

\subsection{$S_{V,0}$ is a projective space}

When $k=0$, equivalence classes of stable pairs are evidently in bijection with points of the projective space
\[
    \bp := \bp \Hom(V,\oo_X)
\]
of lines in $\Hom(V,\oo_X)$, and this bijection is in fact an isomorphism of schemes. Another way of saying this is that one can view $\bp$ as the fine moduli space for families of nonzero maps $V \to \oo_X$.

\begin{prop}
$S_{V,0} \cong \bp$.
\end{prop}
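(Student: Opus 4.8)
The plan is to prove the isomorphism at the level of moduli functors, using that $S_{V,0}$ is a fine moduli space and that $\bp$ represents the functor of line subbundles of $\Hom(V,\oo_X)$. First I would pin down the closed points. For $k=0$ the subscheme $Z$ is empty, so the only pure rank-$1$ sheaf with trivial determinant and $c_2=0$ is $I_Z=\oo_X$ (Lemma~\ref{pure-ideal-sheaf}), and a pair $(\oo_X,\alpha\colon V\to\oo_X)$ is stable exactly when $\alpha\neq 0$: a nonzero $\alpha$ has image a rank-$1$ subsheaf of $\oo_X$, so $\operatorname{coker}\alpha$ is supported in dimension $\le 1<2=\dim\oo_X$, whereas $\alpha=0$ gives $\operatorname{coker}\alpha=\oo_X$ of dimension $2$. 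Since $X$ is connected and projective, the automorphisms of $\oo_X$ are the nonzero scalars, so $(\oo_X,\alpha)$ and $(\oo_X,\alpha')$ are equivalent iff $\alpha'=c\alpha$ for some $c\in\bc^*$. Thus closed points of $S_{V,0}$ biject with lines in $\Hom(V,\oo_X)$, i.e.\ with closed points of $\bp$.

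To upgrade this to an isomorphism of schemes I would produce mutually inverse morphisms via universal families. On $\bp\times X$, twisting the tautological inclusion $\oo_{\bp}(-1)\hookrightarrow\Hom(V,\oo_X)\otimes\oo_{\bp}$ by $\oo_{\bp}(1)$ and using $\Hom(V,\oo_X)=\Hom_X(V,\oo_X)$ produces a morphism $\tilde\alpha\colon\pi_2^*V\to\pi_1^*\oo_{\bp}(1)$ that is fiberwise nonzero; the pair $(\pi_1^*\oo_{\bp}(1),\tilde\alpha)$ is a flat family of stable pairs over $\bp$, so the fine moduli property of $S_{V,0}$ yields a morphism $\bp\to S_{V,0}$. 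Conversely, the universal pair $(\mathcal{F},\tilde\alpha)$ on $S_{V,0}\times X$ has all fibers $\mathcal{F}_s\cong\oo_X$; since $\mathcal{F}$ is flat over $S_{V,0}$ with line-bundle fibers it is locally free of rank $1$, and because $h^0(\mathcal{F}_s)=h^0(\oo_X)=1$ is constant, the evaluation map $\pi_1^*\pi_{1*}\mathcal{F}\to\mathcal{F}$ is a fiberwise isomorphism of line bundles, hence an isomorphism $\mathcal{F}\cong\pi_1^*L$ with $L:=\pi_{1*}\mathcal{F}$ (a seesaw/base-change argument).

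Given $\mathcal{F}\cong\pi_1^*L$, flat base change and the projection formula identify $\tilde\alpha\in\Hom_{S_{V,0}\times X}(\pi_2^*V,\pi_1^*L)$ with an element of $H^0(S_{V,0},\Hom(V,\oo_X)\otimes L)$, i.e.\ a map $L^\vee\to\Hom(V,\oo_X)\otimes\oo_{S_{V,0}}$ that is fiberwise injective by stability, i.e.\ a line subbundle of $\Hom(V,\oo_X)\otimes\oo_{S_{V,0}}$. This is exactly a morphism $S_{V,0}\to\bp$. I would then check that the two constructions are inverse to each other, which follows because both are natural in the base and agree on closed points, the tautological pair on $\bp$ pulling back to the universal pair under the composite $\bp\to S_{V,0}\to\bp$.

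The main obstacle is the family-theoretic heart of the second and third paragraphs: showing that an arbitrary flat family with constant fiber $\oo_X$ is a pullback $\pi_1^*L$, and that the ambiguity in $L$ is exactly absorbed by the pair equivalence. Concretely I must verify that isomorphisms $\pi_1^*L\xrightarrow{\sim}\pi_1^*L'$ commuting with the $\tilde\alpha$'s correspond (via $\pi_{1*}\oo_{S_{V,0}\times X}=\oo_{S_{V,0}}$) precisely to isomorphisms $L\cong L'$ compatible with the two inclusions into $\Hom(V,\oo_X)\otimes\oo_{S_{V,0}}$, so that the equivalence relation defining $S_{V,0}$ matches the one defining $\bp$. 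Since $V$ is only assumed torsion free, the push--pull computation should be phrased using $\lHom(V,\oo_X)$ together with the constancy of $\Hom(V,\oo_X)=H^0(X,\lHom(V,\oo_X))$; this is where flat base change does the real work, while the remaining verifications are routine bookkeeping.
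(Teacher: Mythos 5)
Your functorial strategy is genuinely different from the paper's (which constructs only the morphism $\bp \to S_{V,0}$ from the tautological family, checks it is bijective on closed points, and then deduces it is an isomorphism by proving $S_{V,0}$ is \emph{smooth} via the tangent-space computation from (\ref{def-str-seq})), but as written it has a gap at its family-theoretic heart: the claim that constancy of $h^0(\mathcal{F}_s)=1$ forces $\mathcal{F}\cong\pi_1^*L$. Grauert's theorem and the seesaw argument both require the base to be \emph{reduced}, and reducedness of $S_{V,0}$ is not known a priori --- indeed, ruling out non-reduced or otherwise singular structure is exactly why the paper bothers to prove smoothness. The step genuinely fails over non-reduced bases when $X$ is irregular: for $T=\operatorname{Spec}\bc[\epsilon]/\epsilon^2$ and $0 \neq \xi \in H^1(\oo_X)$, the corresponding line bundle $\mathcal{L}_\xi$ on $T\times X$ is flat with fiber $\oo_X$ and constant $h^0=1$, yet $\pi_{1*}\mathcal{L}_\xi$ is the length-one module $\bc\cdot\epsilon$ (not free), the evaluation map vanishes on the closed fiber, and $\mathcal{L}_\xi$ is not a pullback from $T$. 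Since your argument never uses the fixed-determinant condition at this step, it would ``prove'' that every fiberwise-trivial deformation of $\oo_X$ is trivial, contradicting $T_{[\oo_X]}\operatorname{Pic}(X)=H^1(\oo_X)$. So for a general smooth projective surface --- the setting of the proposition --- the seesaw step as stated is not valid.

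The gap is repairable within the paper's framework: the moduli functor of \cite{Lin18} fixes the determinant scheme-theoretically, i.e., the classifying map $T \to \operatorname{Pic}(X)$ of $\det\mathcal{F}_T$ is constant at $[\oo_X]$, and this (not $h^0$-constancy) is what forces $\mathcal{F}_T \cong \pi_1^* L_T$ over arbitrary, possibly non-reduced $T$; this is precisely the assertion ``$\mathcal{F}_T$ differs from the pull-back by a line bundle pulled back from $T$'' used in the proof of Proposition \ref{zero-locus}. (Alternatively, when $H^1(\oo_X)=0$ one can salvage your base-change argument directly, since $H^1(\mathcal{F}_s)=0$ gives $R^1\pi_{1*}\mathcal{F}=0$ and hence surjectivity of the degree-zero base-change map without reducedness.) With that substitution, the rest of your proof --- the push--pull identification of $\tilde\alpha$ with a fiberwise-injective map $L^\vee \to \Hom(V,\oo_X)\otimes\oo_{S_{V,0}}$ via $\lHom(V,\oo_X)$ and flat base change, and the verification that the two constructions classify each other's universal families --- is correct and gives a proof that avoids deformation theory entirely, at the cost of the careful bookkeeping the paper sidesteps by proving smoothness instead.
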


\begin{proof}
There is a flat family $\pi_2^* V \to \pi_1^* \oo_{\bp}(1)$ on $\bp \times X$ parametrizing all maps $V \to \oo_X$. It is the composition
\[
    \pi_2^* V \to \Hom(V,\oo_X)^* \otimes \oo_{\bp \times X} \to \pi_1^* \oo_{\bp}(1)
\]
of the pull-backs of the natural map $V \to \Hom(V,\oo_X)^* \otimes \oo_X$ on $X$ and the universal map $\Hom(V,\oo_X)^* \otimes \oo_{\bp} \to \oo_{\bp}(1)$.
This family induces a morphism $\bp \to S_{V,0}$, which is clearly a bijection on closed points.
To show that this morphism is an isomorphism, it suffices to show that $S_{V,0}$ is smooth.

Taking $F = \oo_X$ in (\ref{def-str-seq}), the tangent space to $S_{V,0}$ at a closed point $(\oo_X, V \xrightarrow{\alpha} \oo_X)$ is the kernel of the connecting morphism $\delta$ in the exact sequence
\[
    0 \to \Hom(\oo_X,\oo_X) \xrightarrow{ \circ \alpha} \Hom(V,\oo_X) \to \Hom(J^{\bullet},\oo_X) \xrightarrow{\delta
    } \Ext^1(\oo_X,\oo_X).
\]
Here, we have identified $\Ext^1(\oo_X,\oo_X)$ with $H^1(X,\oo_X)$ via the trace map.
Since the kernel of $\delta$ is isomorphic to the cokernel of $\circ \alpha$, its dimension is independent of the choice of the closed point and equals $\dim S_{V,0} = \dim \bp$. Thus $S_{V,0}$ is smooth. (Note that this description of the tangent space is visibly the same as the tangent space of $\bp$.)
\end{proof}

\subsection{Description of $S_{V,k}$ as the zero locus of a section}
The above description of $S_{V,0}$ can be generalized to all $S_{V,k}$ as follows. We will realize the moduli space $S_{V,k}$ of stable pairs as the zero locus of a section of a coherent sheaf on a smooth scheme.

As above, let $\bp=\bp\Hom(V,\oo_X)$.
Over $\bp\times X$, we have the universal family
\begin{equation*}
\pi_2^*V \to \pi_1^*\oo_\bp(1),
\end{equation*}
and recall that there is a short exact sequence
 $ 0\to I_{\mathscr{Z}} \to \oo \to \oo_{\mathscr{Z}} \to 0$
on $X^{[k]} \times X$.
Over $\bp \times X^{[k]} \times X$, we have the following maps
\begin{equation}\label{univ-triple}
  \begin{tikzcd}
    &
    & V \arrow{d}{\phi}
    &
    &
    \\
    0 \arrow{r}
    & \oo_\bp(1)\otimes I_{\mathscr{Z}}\arrow{r}
    & \oo_\bp(1) \arrow{r}{\psi}
    & \oo_\bp(1)\otimes \oo_{\mathscr{Z}} \arrow{r}
    & 0
  \end{tikzcd}
\end{equation}
where the bottom row is exact. (We have omitted the notation for pull-backs.) Note that $\pi_1^*\oo_\bp(1)\otimes \pi_{23}^*\ozk$ is flat over $\bp\times X^{[k]}$.
Considering the cokernel of $\psi\circ \phi$, let $Z_0\subset \bp\times X^{[k]} $ be the closed subscheme, which is the flattening stratum with the maximum Hilbert polynomial $k$. Then, over $Z_0\times X$, the restriction of $\phi$ in (\ref{univ-triple}) factors through $\pi_1^*\oo_\bp(1)\otimes \pi^*_{23}I_{\mathscr{Z}}$.
This induces a family over $Z_0\times X$:
\begin{equation}\label{univ-fam}
  \tilde{\alpha}\colon \pi_2^* V\to
  (\pi_1^*\oo_\bp(1)\otimes \pi^*_{23}I_{\mathscr{Z}})|_{Z_0\times X}.
\end{equation}
(On the left, $\pi_2$ is the projection $Z_0 \times X \to X$, while on the right, $\pi_i$ denote the projection maps to the factors of $\bp \times X^{[k]} \times X$.) Moreover, assuming that $V$ is locally free, the morphism $\psi\circ \phi$
induces a section
\begin{equation*}
  \tilde{\sigma}\colon\oo_{\bp\times X^{[k]}\times X} \to \pi_3 ^*V^* \otimes \pi_1^*\oo_\bp(1)\otimes \pi_{23}^*\ozk.
\end{equation*}
Pushing forward this section along $\pi_{12}$,
we obtain
\begin{equation}\label{section}
    \sigma\colon \oo_{\bp\times X^{[k]}}\to \oo_{\bp}(1) \boxtimes {V^*}^{[k]}.
\end{equation}
By considering stalks, we know that the section $\sigma$ vanishes exactly along $Z_0$, so we denote $Z_0$ as $Z(\sigma)$.

\begin{rem}\label{zero-locus-nlf} In the case when $V$ is not locally free, we can still exhibit $Z_0$ as the zero locus of a section of a locally free sheaf. For this, we choose any quotient $\tilde{V} \twoheadrightarrow V$ with $\tilde{V}$ locally free. Then, replacing $V$ by $\tilde{V}$ and $\phi$ by the composition $\tilde{V} \twoheadrightarrow V \xrightarrow{\phi} \pi_1^* \oo_{\bp}(1)$ (which has the same image as $\phi$), the same construction will exhibit $Z_0$ as the zero locus of a section of $\oo_{\bp}(1) \boxtimes {\tilde{V}^{*[k]}}$, and the following proposition will still hold. But we will consider only the locally free case in later sections.
\end{rem}

\begin{prop}\label{zero-locus}
The scheme $Z(\sigma)$ is isomorphic to $\svk$ and (\ref{univ-fam}) is the universal family.
\end{prop}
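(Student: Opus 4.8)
The plan is to use that $\svk$ is a fine moduli space (\cite[Theorem 1.1]{Lin18}) and to produce an isomorphism via universal properties in both directions. First I would check that the family (\ref{univ-fam}) over $Z(\sigma)=Z_0$ is a flat family of stable pairs. Flatness of the target holds because $I_{\mathscr{Z}}$ is flat over $X^{[k]}$, a property preserved under the twist by $\oo_\bp(1)$ and under restriction to $Z_0\times X$. For fiberwise stability, restricting (\ref{univ-fam}) at a closed point $(\ell,Z)\in Z(\sigma)$ gives a map $\alpha\colon V\to I_Z$ obtained by factoring the nonzero cosection $\phi|_{(\ell,Z)}\colon V\to\oo_X$ (nonzero since $\ell\in\bp$) through $I_Z\hookrightarrow\oo_X$; as a nonzero map to $\oo_X$ is generically surjective, the cokernel of $\alpha$ is $0$-dimensional, while $I_Z$ is pure of dimension $2$, so the pair is stable. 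Hence (\ref{univ-fam}) defines a morphism $f\colon Z(\sigma)\to\svk$ pulling the universal pair back to (\ref{univ-fam}).

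For the inverse I would use the universal pair $\tilde\alpha\colon\pi_2^*V\to\mathcal F$ on $\svk\times X$. By Lemma \ref{pure-ideal-sheaf}, $\mathcal F$ is a flat family of ideal sheaves of length-$k$ subschemes with trivial determinant, so I claim it decomposes as $\mathcal F\cong\pi_1^*\mathcal M\otimes I_{\mathscr{Z}}$, where $\mathscr{Z}\subset\svk\times X$ is flat of relative length $k$ and $\mathcal M$ is a line bundle on $\svk$. Concretely, the reflexive hull $\mathcal F^{**}$ (equivalently $\det\mathcal F$) is a line bundle whose fiberwise restriction is $\det I_Z\cong\oo_X$, so by the seesaw principle $\mathcal F^{**}\cong\pi_1^*\mathcal M$, and $\mathscr{Z}$ is cut out by the inclusion $\mathcal F\hookrightarrow\mathcal F^{**}$. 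The subscheme $\mathscr{Z}$ yields a morphism $\svk\to X^{[k]}$, while the composite $\pi_2^*V\xrightarrow{\tilde\alpha}\mathcal F\hookrightarrow\pi_1^*\mathcal M$ is fiberwise nonzero (again by generic surjectivity of a stable pair) and hence corresponds to a morphism $\svk\to\bp$ with $\mathcal M$ the pullback of $\oo_\bp(1)$. Together these give $g\colon\svk\to\bp\times X^{[k]}$, and because $V\to\pi_1^*\mathcal M$ factors through $I_{\mathscr{Z}}$, the section $\sigma$ of (\ref{section}) pulls back to zero, so $g$ factors through $Z(\sigma)$.

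It then remains to see that $f$ and $g$ are mutually inverse, which is cleanest to phrase as a natural isomorphism of functors of points. A $T$-valued point of $Z(\sigma)$ is precisely a triple consisting of a flat family $\mathscr{Z}_T$ of length-$k$ subschemes, a line bundle $\mathcal M_T$ on $T$, and a fiberwise-nonzero factorization $V_T\to\pi_1^*\mathcal M_T\otimes I_{\mathscr{Z}_T}$, whereas a $T$-valued point of $\svk$ is a flat family of stable pairs. The assignments $(\mathscr{Z}_T,\mathcal M_T,\alpha_T)\mapsto(\pi_1^*\mathcal M_T\otimes I_{\mathscr{Z}_T},\alpha_T)$ and its inverse (the decomposition above) are mutually inverse and compatible with base change. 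By Yoneda this gives $Z(\sigma)\cong\svk$, and since $f$ pulls the universal pair back to (\ref{univ-fam}), the latter is the universal family.

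I expect the main obstacle to be the decomposition $\mathcal F\cong\pi_1^*\mathcal M\otimes I_{\mathscr{Z}}$ in families, specifically verifying that the reflexive hull (or determinant) of the universal sheaf is a line bundle whose formation commutes with base change, and that $\mathscr{Z}$ is genuinely flat of relative length $k$ over the possibly singular base $\svk$. Once this base-change-compatible structure result is in hand, the identification of the two functors, and hence the isomorphism together with the universal family statement, is formal.
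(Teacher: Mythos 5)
Your overall architecture is essentially the paper's: both directions are built via the functor of points, a $T$-point of $Z(\sigma)$ is unpacked as a triple (flat family of length-$k$ subschemes, line bundle on $T$, fiberwise-nonzero factorized map), and the two assignments are checked to be mutually inverse. But the step you defer as ``the main obstacle'' --- that $\mathcal{F}$ decomposes as $\pi_1^*\mathcal{M}\otimes I_{\mathscr{Z}}$ with the line bundle recovered functorially as the reflexive hull, over an arbitrary (possibly nonreduced, singular) base $T$ --- is not a formality to be assumed; it is precisely the mathematical content of the uniqueness half of the proof. Your seesaw/reflexive-hull sketch does not establish it: dualization does not commute with base change in general, and the seesaw argument presupposes exactly what must be shown, namely that $\mathcal{F}^{**}$ is a line bundle whose formation commutes with restriction to the fibers of $T\times X\to T$. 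The paper closes this gap with a dedicated lemma. Dualizing the sequence $0\to L\otimes I_{\mathscr{Z}_T}\to L\to L\otimes\oo_{\mathscr{Z}_T}\to 0$ on $T\times X$ reduces the claim to the vanishing $\lExt^1(\oo_{\mathscr{Z}_T},\oo_{T\times X})=0$, which is proved by Grothendieck--Verdier duality: $R\pi_{1*}R\lHom(\oo_{\mathscr{Z}_T},\omega_X[2])\cong R\lHom(R\pi_{1*}\oo_{\mathscr{Z}_T},\oo_T)$ is quasi-isomorphic to a sheaf in degree $0$ because $\pi_{1*}\oo_{\mathscr{Z}_T}$ is locally free, while the cohomology sheaves of the left side are $\pi_{1*}\lExt^{j}(\oo_{\mathscr{Z}_T},\omega_X[2])$ (with $\pi_{1*}$ exact on sheaves finite over $T$), so $\lExt^j=0$ for $j\ne 2$. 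With this lemma (or an equivalent base-change-compatible statement) inserted, your proof is complete and coincides with the paper's; without it, the key step is missing, not merely routine.

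A smaller slip: in your stability check you assert that the cokernel of $\alpha\colon V\to I_Z$ is $0$-dimensional because a nonzero cosection is generically surjective. This is false at points of $Z(\sigma)$ where the cosection $V\to\oo_X$ has image $\oo_X(-D)\otimes I_W$ for an effective curve $D>0$ --- such points exist unless $V$ satisfies condition (\ref{no-neg-quot}), which is not assumed here --- and there the cokernel of $\alpha$ is $1$-dimensional. The conclusion nevertheless stands, since limit stability only requires $\dim\operatorname{coker}\alpha<\dim I_Z=2$, and any nonzero map to the rank-one torsion-free sheaf $I_Z$ has torsion cokernel.
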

\begin{proof}
We will prove that $Z(\sigma)$ equipped with the universal family $\tilde{\alpha}$ represents the moduli functor $\frak{S}_{V,k}$ of stable pairs.

Let $\alpha_T\colon\pi_2^* V\to \mathcal{F}_T$ be a flat family of stable pairs parametrized by $T$, with the fixed invariants. Then $\mathcal{F}_T$ is a family of ideal sheaves of $k$ points flat over $T$. It induces a morphism $\eta_{X^{[k]}}\colon T\to X^{[k]}$. The sheaf $\mathcal{F}_T$ differs from the pull-back $(\eta_{X^{[k]}}\times \id_X)^*(I_{\mathscr{Z}})$ by a line bundle pulled back from $T$. Letting $L_T$ denote this line bundle on $T$, we have
\begin{equation*}
    \pi_2^*V\to \mathcal{F}_T\hookrightarrow \pi^*_1L_T
\end{equation*}
where the composition is nontrivial on each fiber over $T$.
Here, $\pi_1$ and $\pi_2$ denote the projection maps on $T\times X$. We therefore obtain a morphism $\eta_\bp\colon T\to \bp$ such that $L_T$ is the pull-back of $\oo_\bp(1)$. We thus have a morphism
\begin{equation*}
      \eta=\eta_{\bp}\times \eta_{X^{[k]}}\colon T\to  \bp\times X^{[k]}.
\end{equation*}
The restriction of $\alpha_T$ to a closed point of $T$ is a map $\alpha \colon V\to I_Z$, and the image of this closed point under $\eta$ is $([V\xrightarrow{\alpha} I_Z\hookrightarrow \oo_X], Z)$.
Let $\tilde{\eta} = \eta \times \id_X$.
From the definition of $\eta$, it is clear that $\tilde{\eta}^*(\psi\circ \phi)=\tilde{\eta}^*\psi\circ \tilde{\eta}^*\phi=0$ over fibers.
Thus, $\eta$ factors through $Z_0=Z(\sigma)$ according to the universal property of flattening stratification.
Moreover, $\alpha_T$ is the pull-back of $\tilde{\alpha}$ via $\tilde{\eta}$.

So, we have defined a map
\begin{equation*}
      \tau_T\colon \frak{S}_{V,k}(T) \to {\rm Mor}(T,Z(\sigma)).
\end{equation*}
There is also a map
\begin{equation*}
      \upsilon_T\colon {\rm Mor}(T,Z(\sigma))\to \frak{S}_{V,k}(T),
\end{equation*}
which is given by pulling back $\tilde{\alpha}$. We have seen that $\upsilon_T\circ \tau_T$ equals identity.

To complete the proof, we show that $\tau_T \circ \upsilon_T$ is the identity. Let $\eta_1\in {\rm Mor}(T,Z(\sigma))$ and $\eta_2=\tau_T\circ\upsilon_T(\eta_1)$. Then, their images under $\upsilon_T$ are the same. Namely, letting $\tilde{\eta}_i = \eta_i \times \id_X$, there is an isomorphism
\begin{equation}\label{ggprime} \tilde{\eta}_1^*(
  \oo_\bp(1)\otimes
  I_{\mathscr{Z}}|_{Z(\sigma)\times X})\cong
  \tilde{\eta}_2^*(
  \oo_\bp(1)\otimes
  I_{\mathscr{Z}}|_{Z(\sigma)\times X})
\end{equation}
commuting $\upsilon_T(\eta_1)$ and $\upsilon_T(\eta_2)$.
(We are omitting some pull-backs again.)
According to the next lemma, $\tilde{\eta}_1^*\oo_\bp(1)\cong \tilde{\eta}_2^*\oo_\bp(1)$, as they are the double duals of the sheaves in (\ref{ggprime}).
So, $\eta_1$ and $\eta_2$ induce the same morphism to $\bp$. Combined with (\ref{ggprime}), this implies that they induce the same morphism to $X^{[k]}$ as well.
Thus, $\eta_1=\eta_2$.
\end{proof}

\begin{lem}
   We have an isomorphism $\tilde{\eta}_1^*\oo_\bp(1)\cong \tilde{\eta}_2^*\oo_\bp(1)$ extending (\ref{ggprime}).
\end{lem}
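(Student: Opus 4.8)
The plan is to recover the line bundle $\tilde{\eta}_i^*\oo_\bp(1)$ from the twisted ideal sheaf appearing in (\ref{ggprime}) by passing to the reflexive hull, so that the isomorphism (\ref{ggprime}) automatically induces the desired isomorphism of line bundles. First I would fix notation: write $M_i := \tilde{\eta}_i^*\oo_\bp(1)$, a line bundle on $T\times X$ pulled back from $T$. Because $\oo_{\mathscr{Z}}$ is flat over $X^{[k]}$, pulling back the universal sequence $0\to I_{\mathscr{Z}}\to\oo\to\oo_{\mathscr{Z}}\to 0$ along $\tilde{\eta}_i$ stays exact, so $\tilde{\eta}_i^* I_{\mathscr{Z}} = I_{Z_i}$ is genuinely the ideal sheaf of a subscheme $Z_i\subset T\times X$ that is finite and flat over $T$. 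With this, (\ref{ggprime}) reads as an isomorphism $M_1\otimes I_{Z_1}\cong M_2\otimes I_{Z_2}$ of sheaves on $T\times X$, and the goal becomes $M_1\cong M_2$.

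The crux is the following claim: for $M$ a line bundle and $Z$ finite flat over $T$, the natural inclusion $M\otimes I_Z\hookrightarrow M$ exhibits $M$ as the double dual $(M\otimes I_Z)^{\vee\vee}$, where $(-)^\vee = \lHom(-,\oo_{T\times X})$. Since $M$ is invertible this reduces to showing $I_Z^{\vee\vee}\cong\oo$, and in turn to $\lHom(I_Z,\oo)\cong\oo$. To prove the latter I would dualize $0\to I_Z\to\oo\to\oo_Z\to 0$ to obtain the exact sequence
\[
0\to\lHom(\oo_Z,\oo)\to\oo\xrightarrow{\,r\,}\lHom(I_Z,\oo)\to\lExt^1(\oo_Z,\oo)\to\cdots
\]
and then argue that the outer terms involving $\oo_Z$ vanish. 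Fiberwise this is the standard fact that on the smooth surface $X$ the sheaf $\oo_Z$ is Cohen--Macaulay of codimension $2$, so $\lExt^j(\oo_Z,\oo_X)=0$ for $j<2$; the vanishing then propagates to the family because $Z_i\to T$ is finite flat while $T\times X\to T$ is smooth of relative dimension $2$, so that formation of these $\lExt$ sheaves commutes with base change to fibers. Granting the vanishings, $r$ is an isomorphism, whence $\lHom(I_Z,\oo)\cong\oo$ and therefore $I_Z^{\vee\vee}\cong\oo$.

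With the claim in hand, I would finish by functoriality: applying $(-)^{\vee\vee}$ to (\ref{ggprime}) yields an isomorphism $M_1\cong M_2$, and under the identifications $(M_i\otimes I_{Z_i})^{\vee\vee}\cong M_i$ the canonical maps $M_i\otimes I_{Z_i}\to(M_i\otimes I_{Z_i})^{\vee\vee}$ become the natural inclusions $M_i\otimes I_{Z_i}\hookrightarrow M_i$; the resulting commutative square shows that the isomorphism $M_1\cong M_2$ restricts to (\ref{ggprime}), which is exactly the ``extending'' assertion. I expect the main obstacle to be the relative $\lExt$-vanishing over an \emph{arbitrary} test scheme $T$, which may be non-reduced or singular: the comfortable normality and depth arguments available on the smooth total space $X^{[k]}\times X$ do not apply to $T\times X$, so the vanishing must be obtained by fiberwise reduction and base change for $\lExt$, using the finite flatness of $Z_i$, rather than by a direct codimension-$2$ extension argument.
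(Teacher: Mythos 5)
Your proposal is correct, and its overall skeleton is the same as the paper's: pull back the universal sequence (exact by flatness of $\oo_{\mathscr{Z}}$ over $X^{[k]}$), apply $\lHom(-,\oo_{T\times X})$, and reduce the lemma to the vanishing of $\lExt^{j}(\tilde{\eta}_i^*\oo_{\mathscr{Z}},\oo_{T\times X})$ for $j\le 1$, after which dualization turns (\ref{ggprime}) into the desired isomorphism of line bundles compatibly with the inclusions. Where you genuinely diverge is in the proof of that key vanishing. The paper argues globally via Grothendieck--Verdier duality along $\pi_1\colon T\times X\to T$: since $R\pi_{1*}(\tilde{\eta}_i^*\oo_{\mathscr{Z}})$ is locally free, $R\lHom(R\pi_{1*}(\tilde{\eta}_i^*\oo_{\mathscr{Z}}),\oo_T)$ is a sheaf in degree $0$, and comparing degrees (using that the $\lExt$ sheaves are finite over $T$, so $\pi_{1*}$ detects their vanishing) kills $\lExt^j$ for $j\ne 2$ in one stroke. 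You instead argue fiberwise -- $\oo_{Z_t}$ is Cohen--Macaulay of codimension $2$ on the smooth surface fiber, so $\lExt^j(\oo_{Z_t},\oo_{X_t})=0$ for $j<2$ -- and propagate this to the family by base change for $\lExt$. That propagation is the one step you leave unproved, and you rightly flag it as the crux: over an arbitrary, possibly non-reduced $T$ it does not follow from depth or normality considerations, and it needs either a citation (e.g.\ Altman--Kleiman's base-change theory for relative Ext) or a direct argument: locally choose a surjection from a free sheaf, iterate twice, and observe that the second syzygy is $T$-flat with locally free fibers, hence locally free, giving a length-$2$ locally free resolution whose dual complex restricts to fibers; fiberwise exactness in degrees $0,1$ then propagates by the standard criterion (a map of locally free sheaves injective on all fibers is injective with locally free cokernel, applied twice). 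With that supplied, your double-dual formulation, including the check that the canonical maps $M_i\otimes I_{Z_i}\to (M_i\otimes I_{Z_i})^{\vee\vee}$ are the natural inclusions, correctly yields the ``extending'' assertion. The trade-off: the paper's duality argument is shorter and avoids base-change issues entirely, while your route is more elementary in its ingredients but must pay for working over a general test scheme.
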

\begin{proof}
  Over $T\times X$, we have short exact sequences
\begin{equation*}
    0 \to \tilde{\eta}_i^*(
\oo_\bp(1)\otimes
I_{\mathscr{Z}}|_{Z(\sigma)\times X})  \to \tilde{\eta}_i^*
\oo_\bp(1) \to \tilde{\eta}_i^*\oo_{\mathscr{Z}} \to 0,
\end{equation*}
for $i=1,2$. Applying the functor $\lHom(-,\oo_{T\times X})$, we have the exact sequences
\[
    0\to \tilde{\eta}_i^*\oo_\bp(-1) \to
    (\tilde{\eta}_i^*(\oo_\bp(1)\otimes
I_{\mathscr{Z}}|_{Z(\sigma)\times X}))^* \to \mathcal{E}xt^1(\tilde{\eta}_i^*\oo_{\mathscr{Z}},\oo_{T\times X}) \to 0.
\]
Because of (\ref{ggprime}), it is enough to show $\mathcal{E}xt^1(\tilde{\eta}_i^*\oo_{\mathscr{Z}},\oo_{T\times X})=0$.
Notice that $\mathcal{E}xt^j(\tilde{\eta}_i^*\oo_{\mathscr{Z}},\oo_{T\times X})$ has relative dimension $0$ over $T$.
According to Grothendieck-Verdier duality,
\begin{equation*}
    R\pi_{1*}R\lHom(\tilde{\eta}_i^*\oo_{\mathscr{Z}},\oo_{T\times X}\otimes\omega_X[2])
    \cong
    R\lHom(R\pi_{1*}(\tilde{\eta}_i^*\oo_{\mathscr{Z}}),\oo_T).
\end{equation*}
Since $R\pi_{1*}(\tilde{\eta}_i^*\oo_{\mathscr{Z}})$ is a locally free sheaf, the right side is quasi-isomorphic to a sheaf over $T$. The cohomology sheaves of the left side are isomorphic to
$\pi_{1*}\mathcal{E}xt^j(\tilde{\eta}_i^*\oo_{\mathscr{Z}},\oo_{T\times X}\otimes \omega_X[2])$.
Thus, by comparing degrees, we see that $\mathcal{E}xt^1(\tilde{\eta}_i^*\oo_{\mathscr{Z}},\oo_{T\times X})=0$.
\end{proof}

We use
\begin{equation*}
    \iota \colon \svk \to \bp\Hom(V,\oo_X)\times X^{[k]}
\end{equation*}
to denote the map constructed in the proof above, which is an embedding and induces the isomorphism between $\svk$ and $Z(\sigma)$.
We denote the two induced maps to the factors by $\iota_\bp$ and $\iota_{X^{[k]}}$.

In \cite{KooTho14a,KooTho14b}, the authors use a similar embedding to study the moduli space of stable pairs and related deformation-obstruction theories, with applications towards curve counting invariants.
Their stable pairs are of the form $(F,s\colon \oo_X\to F)$ where $F$ has dimension 1.

\vskip20pt
\section{The perfect obstruction theory and the virtual fundamental class}\label{section_pot_vfc}

Let $X$ be a smooth connected projective surface over $\mathbb{C}$ and $V$ be a locally free coherent sheaf. Morever, assume  $V^*$ has no higher cohomology. Then the existence of a virtual fundamental class for $S_{V,k}$ is predicted by the vanishing of the higher obstruction space
$\Ext^{2}(J^{\bullet}, I_Z)$ for every ideal sheaf $I_Z$ of $k$ points, which follows from
\begin{equation}\label{ext2-v-i}
    \Ext^2(V,I_Z)\cong H^2(V^*\otimes I_Z)\cong H^2(V^*)=0
\end{equation}
and (\ref{def-str-seq}).

In this section, we prove Theorem~\ref{vfc}.
In \S~\ref{pot-vfc}, we show that the embedding $\iota$ induces a 2-term complex of vector bundles concentrated at degrees $-1$ and 0
\[\mathcal{E}^{\bullet}=\{\mathcal{E}^{-1}\to
\mathcal{E}^0\}\]
and a natural morphism to the truncated cotangent complex of $\svk$. This morphism is a perfect obstruction theory and in turn defines a virtual fundamental class. In \S~\ref{exp-description}, we show that the dual of $\mathcal{E}^{\bullet}$ can be described intrinsically as a complex $\mathcal{G}^{\bullet}$, which is essentially the traceless part of $R\lHom(\mathbb{J}^{\bullet},\mathcal{F})$.
Finally, in \S~\ref{obs-rhom}, we impose the condition that
\begin{equation}\label{tr-def-surj}
    \mbox{the composition }\Hom(J^\bullet, I_Z) \to \Ext^{1}(I_Z,I_Z) \xrightarrow{\rm tr} H^1(\oo_X)\mbox{ is surjective}
\end{equation}
for all points of $S_{V,k}$ and deduce that $\mathcal{G}^{\bullet}$ encodes the deformation and obstruction spaces (\ref{def-obs}) of stable pairs.

In this section, we often omit notation for pull-backs along projections.

\subsection{The perfect obstruction theory and the virtual fundamental class}
\label{pot-vfc}
The embedding $\iota$ yields an exact sequence
\begin{equation}\label{sec-seq}
    (\oo_\bp(1) \boxtimes V^{*[k]} )^*|_{\svk} \to  \Omega_{\bp\times X^{[k]}}|_{\svk}\to \Omega_{\svk}\to 0.
\end{equation}

We describe more explicitly (the dual of) the first morphism.
Let $W$ be the geometric vector bundle associated to the locally free sheaf $\oo_\bp(1) \boxtimes V^{*[k]}$ on $\bp \times X^{[k]}$.
With slight abuse of notation, we view the section $\sigma$ in (\ref{section}) as a geometric section of the vector bundle $W$
over $\bp\times X^{[k]}$.
More concretely, on closed points, $\sigma$ sends
\begin{equation*}
    ([s\colon V\to \oo_X],[q\colon \oo_X\to \oo_Z]) \mapsto  q\circ s,
\end{equation*}
where the fiber of $W$
at $([s],[q])$ is $H^0(X,V^* \otimes \oo_Z) \cong \Hom(V,\oo_Z)$. Here, we do have unambiguous choices of representatives $s$ and $q$, which are given by $\phi$ and $\psi$ in (\ref{univ-triple}).
Let $\pi\colon W\to \bp \times X^{[k]}$ be the projection and $T_\pi W$ the relative tangent bundle.
Then, we have the exact sequence of tangent bundles on $W$
\begin{equation*}
    0\to T_\pi W \to TW \xrightarrow{d\pi} \pi^*T(\bp \times X^{[k]}) \to 0.
\end{equation*}
Restricted to the zero section of $W$, which we denote as $0(\bp \times X^{[k]})$, the sequence splits, providing a surjection $$TW|_{0(\bp \times X^{[k]})}\to T_\pi W|_{0(\bp \times X^{[k]})}\cong W.$$
The section $\sigma$ induces $T(\bp\times X^{[k]})\xrightarrow{d\sigma} TW$.
Then, in terms of sheaves, the first morphism in (\ref{sec-seq}) is the dual of the composition of the restrictions
\begin{equation}\label{dsigma-comp}
T(\bp\times X^{[k]})|_{Z(\sigma)}\xrightarrow{d\sigma|_{Z(\sigma)}}
T W|_{Z(\sigma)} \to \oo_{\bp}(1) \boxtimes {V^*}^{[k]}|_{Z(\sigma)}.
\end{equation}

We next explain that the composition (\ref{dsigma-comp}) can be obtained from some natural exact sequences.
The geometric section $\sigma$ is defined by compositions with $s\colon V\to \oo_X$ and $q\colon \oo_X \to \oo_Z$ on two components.
(Recall that, for a point in $Z(\sigma)$, $\alpha$ factors through $I_Z$, so the composition is 0.)
This is also the effect on tangent vectors.
Since the composition over $\bp\times X^{[k]}$
\[
    \sigma \colon \oo \to H^0(V^*) \otimes \oo_{\bp}(1) \to \oo_{\bp}(1) \boxtimes {V^*}^{[k]}
\]
is 0 on $Z(\sigma) = S_{V,k}$, there is an induced map
\[
    (H^0(V^*) \otimes \oo_{\bp}(1))/\oo|_{Z(\sigma)} \to  \oo_{\bp}(1) \boxtimes {V^*}^{[k]} |_{Z(\sigma)}.
\]
Furthermore, we can view it as
\begin{equation}\label{first-tangent-map}
     T \bp|_{Z(\sigma)} \to \oo_{\bp}(1) \boxtimes {V^*}^{[k]}|_{Z(\sigma)}.
\end{equation}
There is also a natural map
\[
    \pi_{1*}\lHom(I_{\mathscr{Z}},\oo_{\mathscr{Z}})|_{Z(\sigma)} \to \pi_{1*}\lHom(V,\oo_{\mathscr{Z}}) \otimes \oo_{\bp}(1)|_{Z(\sigma)}
\]
induced by $\pi_2^* V \to \mathcal{F}$, which we can view as
\begin{equation}\label{second-tangent-map}
    TX^{[k]}|_{Z(\sigma)} \to \oo_{\bp}(1) \boxtimes {V^*}^{[k]}|_{Z(\sigma)}.
\end{equation}
Then, the composition (\ref{dsigma-comp}) is the combination of (\ref{first-tangent-map}) and (\ref{second-tangent-map}).
To make this fact more transparent, the appendix includes a description of the restrictions of (\ref{dsigma-comp}) to fibers, using deformations over the dual numbers.

Let $ L_{\svk}$ denote the cotangent complex of $\svk$ and $t_{\geq -1}L_{\svk}$ be the truncation. Let $I$ be the ideal sheaf of $\svk$ in $\bp\times X^{[k]}$. The description of $\svk$ as the zero locus $Z(\sigma)$ provides a natural morphism $\phi$ of complexes concentrated in degrees $-1$ and $0$,
\begin{equation}\label{obs-thy}
    \begin{CD}
     \mathcal{E}^{\bullet} @= \{ (\oo_\bp(1) \boxtimes V^{*[k]})^*|_{\svk} @>>> \Omega_{\bp\times X^{[k]}}|_{\svk}\}\\
    @. @VVV @|\\
    @. \{I/I^2 @>>> \Omega_{\bp\times X^{[k]}}|_{\svk}\}.
    \end{CD}
\end{equation}
Moreover, the map $\mathcal{H}^0(\phi)$ between cohomology sheaves is an isomorphism and $\mathcal{H}^{-1}(\phi)$ is surjective. On the other hand, in the derived category ${\rm D}(\svk)$, the lower complex is isomorphic to $t_{\geq -1}L_{\svk}$, according to \cite[Corollaire 3.1.3, P205]{Ill71}. Thus, we have obtained a perfect obstruction theory for $\svk$ \cite[Definition 4.4, Definition 5.1]{BehFan97}. It follows from \cite[\S6 ``The basic example"]{BehFan97} that the push-forward to $\bp\times X^{[k]}$ of the corresponding virtual fundamental class is
\begin{equation*}
    \iota_* [\svk]^\vir=
    e(\oo_\bp(1) \boxtimes V^{*[k]}) \quad \mbox{in } A_{\chi(V^*)-1-(\rk V-2)k}(\bp\times X^{[k]}).
\end{equation*}

\begin{rem}
Let us consider taking an elementary modification $W$ of $V$ by a pure dimension 1 sheaf.
Then, $W$ is also locally free.
If $W$ has no higher cohomology either,
with this description of the virtual fundamental class, a formula relating the virtual fundamental classes is immediate.
\end{rem}

\subsection{A description of $\mathcal{E}^{\bullet}$ in terms of the universal sheaf}
\label{exp-description}
By Lemma \ref{end-splits} and the fact that $I_{\mathscr{Z}}$ is flat over $X^{[k]}$, 
the trace map splits $R\lHom(\mathcal{F},\mathcal{F})$ as 
$$\oo_{Z(\sigma) \times X} \oplus \lHom(I_{\mathscr{Z}},\oo_{\mathscr{Z}})|_{Z(\sigma) \times X}[-1].$$ The sheaf $\lHom(I_{\mathscr{Z}},\oo_{\mathscr{Z}})$ is supported on $\mathscr{Z}$ and its pushforward to $X^{[k]}$ is known to be the tangent sheaf $TX^{[k]}$. 
Thus
\begin{equation*}
    R\pi_{1*}R\lHom(\mathcal{F},\mathcal{F})\cong R\pi_{1*}\oo_{\svk\times X} \oplus\iota_{X^{[k]}}^* TX^{[k]}[-1].
\end{equation*}
Let $t_{\geq 1}R\pi_{1*}\oo_{\svk\times X}$ denote the truncation, which has cohomologies $H^1(\oo_X)\otimes \oo_{S_{V,k}}$ and $H^2(\oo_X)\otimes \oo_{S_{V,k}}$ at degrees 1 and 2 respectively. Then, we have a distinguished triangle over $\svk$:
\begin{equation*}
    \oo_{S_{V,k}} \oplus \iota_{X^{[k]}}^* TX^{[k]}[-1] \to R\pi_{1*}R\lHom(\mathcal{F},\mathcal{F}) \to t_{\geq 1}R\pi_{1*}\oo_{\svk\times X}\to
\end{equation*}
Let $\jbullet= \{\pi_2^*V\xrightarrow{\tilde{\alpha}} \mathcal{F}\}$
be the complex over $\svk\times X$ positioned at degrees 0 and 1.
Applying $R\pi_{1*}R\lHom(-,\mathcal{F})$ to the distinguished triangle
\[
    \jbullet \to \pi^*_2 V \xrightarrow{\tilde{\alpha}} \mathcal{F} \to,
\]
we get a distinguished triangle
\begin{equation}\label{rel-hom-triangle}
    R\pi_{1*}R\lHom(\mathcal{F},\mathcal{F}) \xrightarrow{\hat{\alpha}} R\pi_{1*}R\lHom(\pi^*_2 V,\mathcal{F}) \to R\pi_{1*}R\lHom(\jbullet,\mathcal{F})\to.
\end{equation}
We consider the composition $R\pi_{1*}R\lHom(\jbullet,\mathcal{F})[-1]\to R\pi_{1*}R\lHom(\mathcal{F},\mathcal{F})\to t_{\geq 1}R\pi_{1*}\oo_{\svk\times X}$ and complete it to get the following distinguished triangle
\begin{equation}\label{define-Gbullet}
    \mathcal{G}^{\bullet}[-1]\to R\pi_{1*}R\lHom(\jbullet,\mathcal{F})[-1] \to t_{\geq 1}R\pi_{1*}\oo_{\svk\times X} \to.
\end{equation}
Furthermore, (\ref{rel-hom-triangle}) induces a distinguished triangle
\begin{equation}\label{def-g}
    \oo_{S_{V,k}} \oplus \iota_{X^{[k]}}^* TX^{[k]}[-1]
    \to R\pi_{1*}R\lHom(\pi^*_2 V,\mathcal{F}) \to \mathcal{G}^\bullet \to.
\end{equation}
We will show in \S \ref{obs-rhom} that $\mathcal{G}^\bullet$ encodes the deformation and obstruction spaces (\ref{def-obs}) of stable pairs 
under the condition (\ref{tr-def-surj}).





In the following, we use $\pi_1,\pi_2$ to denote the projection maps on $Z(\sigma) \times X$. Unless otherwise specified, the subscheme $\mathscr{Z}$ will denote the pull-back of the universal subscheme to $\bp \times X^{[k]} \times X$ followed by the restriction to $Z(\sigma) \times X$.

\begin{prop}\label{complexes-qis}
The complex $\mathcal{G}^\bullet$ on $Z(\sigma)$ is isomorphic to 
\[
    \mathcal{E}^{\bullet \vee} = \{ (T\bp \oplus TX^{[k]})|_{Z(\sigma)} \to \oo_{\bp}(1) \boxtimes {V^*}^{[k]}|_{Z(\sigma)} \}
\]
obtained by combining (\ref{first-tangent-map}) and (\ref{second-tangent-map}).
\end{prop}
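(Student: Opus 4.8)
The plan is to realize both $\mathcal{G}^\bullet$ and $\mathcal{E}^{\bullet\vee}$ as honest two-term complexes sharing the target $\oo_\bp(1)\boxtimes{V^*}^{[k]}$ and to match them through the cone description (\ref{def-g}). First I would evaluate $A:=R\pi_{1*}R\lHom(\pi_2^*V,\mathcal{F})$. Since $V$ is locally free and $\mathcal{F}\cong\oo_\bp(1)\otimes I_{\mathscr{Z}}$ on $Z(\sigma)\times X$, we have $A\cong R\pi_{1*}(\pi_2^*V^*\otimes\oo_\bp(1)\otimes I_{\mathscr{Z}})$. Applying $R\pi_{1*}(\pi_2^*V^*\otimes\oo_\bp(1)\otimes(-))$ to the universal sequence $0\to I_{\mathscr{Z}}\to\oo\to\oo_{\mathscr{Z}}\to 0$ produces a distinguished triangle whose second term is $H^0(V^*)\otimes\oo_\bp(1)$, concentrated in degree $0$ by the hypothesis $H^{>0}(V^*)=0$ and base change, and whose third term is $\oo_\bp(1)\boxtimes{V^*}^{[k]}$, concentrated in degree $0$ because $\mathscr{Z}\to Z(\sigma)$ is finite. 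Hence $A$ is quasi-isomorphic to the two-term complex
\[
    \mathcal{B}^\bullet=\{H^0(V^*)\otimes\oo_\bp(1)\xrightarrow{\rho}\oo_\bp(1)\boxtimes{V^*}^{[k]}\}
\]
in degrees $0$ and $1$, with $\rho$ the natural restriction map of (\ref{section}). I record for later that $\rho$ precomposed with the tautological inclusion $\oo\to H^0(V^*)\otimes\oo_\bp(1)$ is the section $\sigma$, which vanishes on $Z(\sigma)=\svk$.

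Next I would pin down, at the chain level, the map $g\colon\oo\oplus\iota_{X^{[k]}}^*TX^{[k]}[-1]\to A$ appearing in (\ref{def-g}). By construction $g$ is the restriction of $\hat\alpha$ to the summand $\oo_{\svk}\oplus\iota_{X^{[k]}}^*TX^{[k]}[-1]$ of $R\pi_{1*}R\lHom(\mathcal{F},\mathcal{F})$ provided by the trace splitting (Lemma \ref{end-splits}). I would lift $g$ to a chain map into $\mathcal{B}^\bullet$ and identify its two components. The degree-$0$ component $g^0\colon\oo\to H^0(V^*)\otimes\oo_\bp(1)$ is obtained by transporting $\id_{\mathcal{F}}\in\pi_{1*}\lHom(\mathcal{F},\mathcal{F})$ through $\hat\alpha=(-)\circ\tilde\alpha$ to $\tilde\alpha\in\mathcal{H}^0(\mathcal{B}^\bullet)=\ker\rho$; this is precisely the tautological sub-line-bundle inclusion of the Euler sequence of $\bp=\bp\Hom(V,\oo_X)$, so $\operatorname{coker}g^0\cong T\bp$. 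The degree-$1$ component $g^1\colon\iota_{X^{[k]}}^*TX^{[k]}\to\oo_\bp(1)\boxtimes{V^*}^{[k]}$ is read off from $\mathcal{H}^1(\hat\alpha)$ via the identification $\pi_{1*}\lHom(I_{\mathscr{Z}},\oo_{\mathscr{Z}})\cong TX^{[k]}$, and equals the map (\ref{second-tangent-map}). That $g$ is a genuine chain map is forced by $\rho\circ g^0=\sigma|_{Z(\sigma)}=0$ from the first step.

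Finally I would form the cone and simplify. The cone of $g$ is the three-term complex
\[
    \oo\xrightarrow{(0,\,g^0)}\iota_{X^{[k]}}^*TX^{[k]}\oplus(H^0(V^*)\otimes\oo_\bp(1))\xrightarrow{(g^1,\,\rho)}\oo_\bp(1)\boxtimes{V^*}^{[k]}
\]
in degrees $-1,0,1$. Since $g^0$ is a sub-line-bundle inclusion, the subcomplex $\{\oo\xrightarrow{\sim}\im g^0\}$ is acyclic, and because $\rho|_{\im g^0}=\sigma|_{Z(\sigma)}=0$ the differential descends to the quotient. The quotient is $\{(T\bp\oplus TX^{[k]})|_{Z(\sigma)}\to\oo_\bp(1)\boxtimes{V^*}^{[k]}\}$ in degrees $0,1$, whose differential is $\rho$ on $T\bp=(H^0(V^*)\otimes\oo_\bp(1))/\oo$, i.e. (\ref{first-tangent-map}), and $g^1$ on $TX^{[k]}$, i.e. (\ref{second-tangent-map}). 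This is exactly $\mathcal{E}^{\bullet\vee}$, with the differential (\ref{dsigma-comp}) realized as the combination of (\ref{first-tangent-map}) and (\ref{second-tangent-map}), yielding $\mathcal{G}^\bullet\cong\mathcal{E}^{\bullet\vee}$.

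I expect the main obstacle to be the chain-level identification of $g$ in the second step: one must verify that the homotopy class of the derived map in (\ref{def-g}) is represented by precisely these two natural maps, which requires careful bookkeeping of the trace splitting of $R\pi_{1*}R\lHom(\mathcal{F},\mathcal{F})$ and of the action of $\hat\alpha=(-)\circ\tilde\alpha$ on the $TX^{[k]}$-summand. The dual-number computation in the appendix corroborates this interpretation of the differential and should serve as the sanity check that the induced map is indeed (\ref{second-tangent-map}).
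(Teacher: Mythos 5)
Your proposal is correct and follows essentially the same route as the paper: both represent $R\pi_{1*}R\lHom(\pi_2^*V,\mathcal{F})$ by the two-term complex $\{H^0(V^*)\otimes\oo_\bp(1)\to\oo_\bp(1)\boxtimes {V^*}^{[k]}\}$, identify the map from the trace-split summand $\oo\oplus\iota_{X^{[k]}}^*TX^{[k]}[-1]$ componentwise as the universal line of $\bp$ together with (\ref{second-tangent-map}), and then pass to the cone and cancel the acyclic subcomplex $\{\oo\xrightarrow{\sim}\im g^0\}$ via the Euler sequence. The chain-level identification you flag as the main obstacle is exactly what the paper supplies: it chooses a locally free resolution $\mathcal{F}^\bullet\to\mathcal{F}$ admitting a lift of $\tilde\alpha$, represents $\hat\alpha$ by the explicit chain map $\lHom^\bullet(\mathcal{F}^\bullet,\mathcal{B}^\bullet)\to\lHom^\bullet(V,\mathcal{B}^\bullet)$ with $\mathcal{B}^\bullet=\{\oo_\bp(1)\to\oo_\bp(1)\otimes\oo_{\mathscr{Z}}\}$ on $Z(\sigma)\times X$ before pushing forward, so the homotopy class is pinned down by construction rather than inferred from its effect on cohomology sheaves.
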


\begin{proof} Let $\mathcal{F}^{\bullet} \to \mathcal{F}$ be a two-step locally free resolution of the universal sheaf $\mathcal{F} \cong \oo_{\bp}(1) \otimes I_{\mathscr{Z}}$ on $Z(\sigma) \times X \subset \bp \times X^{[k]}\times X$. We may assume $\mathcal{F}^{\bullet}$ is chosen such that $\tilde{\alpha} \colon V \to \mathcal{F}$ lifts to a chain map $V \to \mathcal{F}^0$ (replace $\mathcal{F}^0$ by $\mathcal{F}^0 \oplus V$, use $\tilde{\alpha}$ to define the map $\mathcal{F}^0 \oplus V \to \mathcal{F}$, and take the new kernel).
We also note that $\mathcal{F}$ is quasi-isomorphic to the complex $$\mathcal{B}^\bullet=\{\oo_{\bp}(1)\to \oo_{\bp}(1) \otimes \oo_{\mathscr{Z}}\}$$
in degrees 0 and 1. Then, the morphism $R\lHom(\mathcal{F},\mathcal{F}) \to R\lHom(V,\mathcal{F})$ can be represented by the chain map
\[
 \lHom^{\bullet}(\mathcal{F}^{\bullet},\mathcal{B}^{\bullet}) \to \lHom^{\bullet}(V,\mathcal{B}^{\bullet}).
\]
Combining this with the proof of Lemma \ref{end-splits}, there is a commutative diagram of chain maps
\begin{equation*}
    \begin{tikzcd}
      \oo\oplus \lHom(I_\mathscr{Z},\oo_\mathscr{Z})[-1] \ar{d}{\rm qis} \arrow{rd}{\beta} 
      & \\
      \lHom^\bullet(\mathcal{F}^\bullet, \mathcal{B}^\bullet) \arrow{r}{}
      & \lHom^\bullet(V, \mathcal{B}^\bullet)
    \end{tikzcd}
\end{equation*}
on $Z(\sigma) \times X$. Note that the composition $\beta$ is induced by the maps $\oo \cong \lHom(I_{\mathscr{Z}},\oo) \to \lHom(V,\oo_{\bp}(1)) \cong V^* \otimes \oo_{\bp}(1)$ and $\lHom(I_{\mathscr{Z}}, \oo_{\mathscr{Z}}) \to \lHom(V,\oo_{\bp}(1)\otimes \oo_{\mathscr{Z}}) \cong V^* \otimes \oo_{\bp}(1) \otimes \oo_{\mathscr{Z}}$ induced by $\tilde{\alpha}$.
  
Taking the (derived) push-forward of $\beta$ to $Z(\sigma)$ and taking the modification by the trace map (which amounts to removing the higher pushforwards of $\oo$) now yields a chain map
\begin{equation*}
    \begin{tikzcd}
      \{ \oo \arrow{r}{0} \arrow[hook]{d}
      & TX^{[k]}|_{Z(\sigma)}\} \arrow{d}\\
      \{ H^0(V^*)\otimes \oo_{\bp}(1) \arrow{r}
      & \oo_{\bp}(1)\boxtimes V^{*[k]}\},
    \end{tikzcd}
\end{equation*}
in which the left vertical map is the universal line from $\bp$ and the right vertical map is (\ref{second-tangent-map}). We can represent $\mathcal{G}^{\bullet}$ by the cone of this chain map, which is quasi-isomorphic to $\{ (T\bp \oplus TX^{[k]})|_{Z(\sigma)} \to \oo_{\bp}(1) \boxtimes {V^*}^{[k]}|_{Z(\sigma)} \}$ coming from (\ref{first-tangent-map}) and (\ref{second-tangent-map}).
\end{proof}

\subsection{Agreement of two obstruction theories}\label{obs-rhom}

Recall that, given a stable pair $(I_Z,\alpha\colon V\to I_Z)$ in $\svk$, the deformation and obstruction spaces are $\Hom(J^\bullet,I_Z)_0$ and $\Ext^1(J^\bullet,I_Z)_0$ respectively.
The vanishing (\ref{ext2-v-i}) implies that the following maps are surjective:
\begin{equation*}
    \Ext^{1}(J^\bullet, I_Z) \xrightarrowdbl{} \Ext^{2}(I_Z,I_Z) \xrightarrowdbl{\rm tr} H^2(\oo_X).
\end{equation*}
We now assume the condition (\ref{tr-def-surj}) holds.
Then the expected dimension of $S_{V,k}$, using these deformation and obstruction spaces, is given by
\begin{eqnarray}
&&\hom(J^\bullet,I_Z)_0-\ext^1(J^\bullet,I_Z)_0\nonumber\\
&=&\hom(J^\bullet,I_Z)-h^1(\oo_X)-\ext^1(J^\bullet,I_Z)_0+h^2(\oo_X)\nonumber\\
&=&\chi(J^\bullet,I_Z)+\chi(\oo_X)-1\nonumber\\
&=&\chi(V^*)-1-(\rk V-2)k
\label{sp_expdim}
\end{eqnarray}
This is the same as the expected dimension from the deformation-obstruction theory (\ref{obs-thy}) given by the embedding.




To complete the proof of Theorem \ref{vfc}, we show, assuming (\ref{tr-def-surj}), that the deformation and obstruction spaces are encoded in the complex $\mathcal{G}^\bullet$.
As before, let $J^\bullet$ denote the complex $\{V\xrightarrow{\alpha} I_Z\}$. 
To see that the cohomology of the (derived) restriction of $\mathcal{G}^\bullet$ to fibers at a point $p = (I_Z,\alpha \colon V \to I_Z)$ of $S_{V,k}$ agrees with the deformation and obstruction spaces, we use a simple base change argument. 
We consider the base change diagram
\[\xymatrix{
     \{ p \} \times X \ar[d]_{\pi} \ar@{^(->}[r]^{\tilde{j}} & S_{V,k} \times X \ar[d]^{\pi_1} \\
    \{ p \} \ar@{^(->}[r]_j & \svk
}.\]
Since $\pi_1$ is flat, the natural transformation of derived functors
\[
    Lj^* R{\pi_1}_* \to R{\pi}_* L\tilde{j}^*
\]
is an isomorphism \cite[Proposition 3.9.5, Theorem 3.10.3]{Lip09}. Applying this to $R\lHom(\jbullet,\mathcal{F})$ and using the fact that $R\lHom$ commutes with pull-backs \cite{Huy06}, we get
\begin{equation}\label{rhoms}
    Lj^* R{\pi_1}_* R\lHom(\jbullet,\mathcal{F}) \cong R\Gamma R\lHom(J^{\bullet},I_Z) \cong R\Hom(J^{\bullet},I_Z).
\end{equation}
The second isomorphism comes from $R\Gamma R\lHom(E^{\bullet},-) \cong R\Hom(E^{\bullet},-)$ for any $E^{\bullet}$, which follows from \cite[Proposition 2.58]{Huy06} since if $I^{\bullet}$ is a complex of injective sheaves, then $\lHom(E^{\bullet},I^{\bullet})$ is $\Gamma$-acyclic \cite{God58}.
Applying $Lj^*$ to the distinguished triangle (\ref{define-Gbullet}) and taking cohomology, we get the long exact sequence
\begin{multline*}
    0\to H^0(Lj^*\mathcal{G}^\bullet) \to \Hom(J^\bullet, I_Z) \to H^1(\oo_X)\xrightarrow{=0} \\
    H^1(Lj^*\mathcal{G}^\bullet) \to \Ext^1(J^\bullet, I_Z) \to H^2(\oo_X)\to 0.
\end{multline*}
The fourth arrow equals zero because of the condition (\ref{tr-def-surj}).
Therefore, 
\begin{equation*}
    H^i(Lj^*\mathcal{G}^\bullet)\cong \Ext^i(J^\bullet, I_Z)_0, \quad\mbox{for }i=0,1.
\end{equation*}

\vskip20pt
\section{Comparing moduli of stable pairs and Quot schemes}



In this section, we explore the relationship between the moduli of stable pairs $S_{V,k}$ and the Quot scheme $\Quot(V,k)$ (defined below). The connection between these two spaces is especially close if we assume that the zero locus of every nonzero map $V \to \oo_X$ is 0-dimensional, which we state more simply as
\begin{equation}\label{no-neg-quot}
  \text{$V$ has no cosections that vanish on curves}.
\end{equation}
To justify making such an assumption, we explain how to construct sheaves $V$ satisfying (\ref{no-neg-quot}), and we also ensure that the conditions assumed in previous sections ($V$ is locally free, $V^*$ has vanishing higher cohomology), but which are not necessary in this section, can be guaranteed.

But first, we make an observation about the condition (\ref{no-neg-quot}). Using the fact that any torsion free rank 1 sheaf on $X$ is the tensor product of a line bundle and an ideal sheaf of points, together with the fact that ideal sheaves of points and negative line bundles have inclusions into $\oo_X$, we easily see that

\begin{lem}\label{ideal-sheaf} The following conditions on $V$ are equivalent:
\begin{enumerate}
    \item $V$ has no cosections that vanish on curves.
    \item The image of any nonzero map $V \to I_Z$ is an ideal sheaf of a 0-dimensional subscheme.
    \item There is no nonzero morphism $V \to \oo_X(-D)$ for any effective divisor $D > 0$.
\end{enumerate}
\end{lem}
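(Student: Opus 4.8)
The plan is to reduce all three conditions to a single statement about the image of a nonzero map into $\oo_X$, after which the equivalences become formal. The starting observation is that for any $0$-dimensional $Z$ (allowing $Z=\emptyset$, so that $I_Z=\oo_X$) the ideal sheaf $I_Z$ sits inside $\oo_X$; hence a nonzero map $\phi\colon V\to I_Z$ composed with $I_Z\hookrightarrow\oo_X$ is a nonzero cosection whose image $\im\phi$ is the same rank-one torsion-free subsheaf of $\oo_X$ whether we regard it inside $I_Z$ or inside $\oo_X$. So it suffices to understand rank-one torsion-free subsheaves of $\oo_X$.

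The key step is the following dictionary. Given a nonzero subsheaf $\im\phi\subseteq\oo_X$, the classification of torsion-free rank-one sheaves on the smooth surface $X$ (a line bundle tensored with an ideal of points) together with the reflexive hull lets me write $\im\phi=\oo_X(-D)\otimes I_{Z'}$ for a unique effective divisor $D\ge 0$ and $0$-dimensional $Z'$, in such a way that the inclusion factors as $\oo_X(-D)\otimes I_{Z'}\hookrightarrow\oo_X(-D)\hookrightarrow\oo_X$. From this decomposition I read off three things at once: the cokernel $\oo_X/\im\phi$ is supported on $D\cup Z'$, so the zero locus of $\phi$ is $1$-dimensional exactly when $D>0$; when $D>0$ the map $\phi$ factors through a nonzero map $V\to\oo_X(-D)$; and $\im\phi$ is the ideal sheaf of a $0$-dimensional subscheme exactly when $D=0$.

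With this in hand each condition becomes ``$D=0$ for every relevant map''. Condition (1) says $D=0$ for every cosection $V\to\oo_X$; condition (2) says $D=0$ for every nonzero $V\to I_Z$, which by the first paragraph is the same family of images; and condition (3) forbids exactly the factorization $V\to\oo_X(-D)$ with $D>0$. Thus (2) $\Rightarrow$ (1) is the special case $Z=\emptyset$; (1) $\Rightarrow$ (2) follows because the image of $V\to I_Z$ is unchanged when composed into $\oo_X$, so $D=0$ forces it to be an ideal of points; and for (1) $\Leftrightarrow$ (3), a cosection with $D>0$ factors through some $\oo_X(-D)$, $D>0$ (contradicting (3) under (1)), while conversely composing any nonzero $V\to\oo_X(-D)$, $D>0$, with $\oo_X(-D)\hookrightarrow\oo_X$ yields a cosection whose image lies in $\oo_X(-D)$ and hence has positive divisorial part (contradicting (1) under (3)).

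The only non-formal point, and the one I would be most careful about, is the structural decomposition $\im\phi=\oo_X(-D)\otimes I_{Z'}$ and in particular the compatibility of the inclusion $\im\phi\hookrightarrow\oo_X$ with the inclusion $\oo_X(-D)\hookrightarrow\oo_X$ of its reflexive hull. Concretely, I would take double duals of $\im\phi\hookrightarrow\oo_X$: since $\im\phi$ is torsion-free of rank one on the smooth surface, $(\im\phi)^{\vee\vee}$ is a line bundle $L$, the natural inclusion $\im\phi\hookrightarrow L$ has $0$-dimensional cokernel, and functoriality of $(-)^{\vee\vee}$ gives $L\hookrightarrow\oo_X$, whence $L\cong\oo_X(-D)$ for an effective $D\ge 0$ by the correspondence between nonzero maps $L\to\oo_X$ and effective divisors. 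Everything else is bookkeeping with these inclusions.
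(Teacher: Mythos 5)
Your proof is correct and follows essentially the same route as the paper, which only sketches the argument by invoking the decomposition of a torsion-free rank-one sheaf as a line bundle tensored with an ideal sheaf of points and the inclusions $I_{Z'}\hookrightarrow\oo_X$, $\oo_X(-D)\hookrightarrow\oo_X$. Your double-dual construction of $\im\phi=\oo_X(-D)\otimes I_{Z'}$ compatibly with the inclusion into $\oo_X$ simply supplies the details the paper leaves to the reader.
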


\subsection{Stable pairs and quotients}
Grothendieck \cite{Gro60} showed that for fixed $V$, there is a projective fine moduli space of quotients of $V$ with fixed invariants, called the \emph{Quot scheme}.
Let $\Quot(V,k)$ denote the Quot scheme whose closed points correspond to surjective maps $V \twoheadrightarrow F$,
where $F$ is a coherent sheaf with $\rk(F)=1$, $\det(F) = \oo_X$, and $c_2(F) = k$.
Let $U_{V,k} \subset \Quot(V,k)$ denote the open subscheme of quotients $V \twoheadrightarrow F$ such that $F$ is pure.
Then $U_{V,k}$ is isomorphic to the open subscheme of $S_{V,k}$ consisting of stable pairs in which the map is surjective (we denote this subscheme by $U_{V,k}$ as well).
Thus $S_{V,k}$ and $\Quot(V,k)$ can be viewed as two (usually different) compactifications of $U_{V,k}$.




One advantage of working with the Quot scheme is that its recursive structure can be exploited to prove results like

\begin{lem}
Let $r = \rk V \ge 2$ and suppose every component of $\Quot(V,k)$ has dimension $\le d$. Then $\Quot(V,k+\ell)$ is empty for all $\ell > \tfrac{d}{r}$.
\end{lem}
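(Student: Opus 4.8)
The plan is to exploit the recursive structure of the Quot scheme by relating quotients with second Chern class $k+\ell$ to quotients with second Chern class $k$, via enlargement of the kernel. Concretely, suppose $\Quot(V,k+\ell)$ is nonempty and fix a point $[V \twoheadrightarrow F]$ with kernel $E$, so that $0 \to E \to V \to F \to 0$ with $\rk E = r-1 \ge 1$, and $E$ is torsion free, being a subsheaf of the torsion free sheaf $V$. Given any subsheaf $E' \subset E$ with $E/E'$ a zero-dimensional sheaf of length $\ell$, the quotient $V/E'$ fits into $0 \to E/E' \to V/E' \to F \to 0$, and a Chern class computation gives $\rk(V/E') = 1$, $\det(V/E') = \oo_X$, and $\ch_2(V/E') = \ch_2(F) + \ell = -k$, so that $[V \twoheadrightarrow V/E']$ is a point of $\Quot(V,k)$. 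Since a point of a Quot scheme is recorded by its kernel subsheaf, distinct $E'$ yield distinct points of $\Quot(V,k)$.

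First I would organize these subsheaves $E'$ into the Quot scheme $\Quot(E,\ell)_0$ of zero-dimensional length-$\ell$ quotients of $E$. Its universal sequence $0 \to \mathcal{E}' \to E \to \mathcal{T} \to 0$ on $\Quot(E,\ell)_0 \times X$ has $\mathcal{T}$ flat and $E$ flat over the base, so $\mathcal{E}'$ is flat; composing $\mathcal{E}' \hookrightarrow E \hookrightarrow V$ produces a flat family of quotients $V/\mathcal{E}'$, hence a morphism $\Quot(E,\ell)_0 \to \Quot(V,k)$. By the previous paragraph this morphism is injective on closed points, so its fibers are zero-dimensional and the image has dimension equal to $\dim \Quot(E,\ell)_0$.

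Next I would bound $\dim \Quot(E,\ell)_0$ from below by $r\ell$. Since $E$ is torsion free of rank $\rho = r-1$, it is locally free on a dense open subset $U \subset X$; over a general point $x \in U$ the length-one quotients of $E$ supported at $x$ form a $\bp^{\rho-1}$. Choosing $\ell$ distinct general points of $U$ and a one-dimensional quotient of the fiber at each produces a constructible family of dimension $2\ell + \ell(\rho-1) = \ell(\rho+1) = r\ell$ inside $\Quot(E,\ell)_0$. Combining with the previous step gives $\dim \Quot(V,k) \ge r\ell > d$, contradicting the hypothesis that every component of $\Quot(V,k)$ has dimension at most $d$; hence $\Quot(V,k+\ell)$ must be empty.

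The Chern class bookkeeping and the injectivity are routine; the main obstacle is the lower bound on $\dim \Quot(E,\ell)_0$ when $E$ fails to be locally free, since then the punctual structure of $\Quot(E,\ell)_0$ over the singular locus of $E$ could in principle differ from the free model. This is precisely why I would restrict to general points lying in the locally free locus of $E$, where the count reduces to the familiar computation for $\oo_X^{\rho}$; this suffices because we only need the lower bound coming from a single constructible family, not a full description of $\Quot(E,\ell)_0$.
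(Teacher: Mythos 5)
Your proof is correct and follows essentially the same route as the paper: both take a single point of $\Quot(V,k+\ell)$ and produce from it an $r\ell$-dimensional family inside $\Quot(V,k)$ by modifying at $\ell$ general points ($2\ell$ dimensions for the choice of points, a $\bp^{r-2}$ at each point for the rank-one quotient of the kernel fiber), contradicting $\dim \Quot(V,k) \le d < r\ell$. Your kernel-side packaging via the morphism $\Quot(E,\ell)_0 \to \Quot(V,k)$ is just the dual description of the paper's quotient-side family $V \twoheadrightarrow F \oplus \oo_Z$, and your restriction to the locally free locus of $E$ plays the same role as the paper's restriction to points where the fibers of $F$ are one-dimensional.
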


\begin{proof}
Assume for contradiction that there is a quotient $\beta \colon V \twoheadrightarrow F$ with $\ch(F) = (1,0,-k-\ell)$ and $\ell > \tfrac{d}{r}$. Since $F$ has rank one, its fibers are generically one-dimensional. Let $Z$ be any collection of $\ell$ distinct points at which the fibers of $F$ are one-dimensional. For each $p \in Z$, let $K_p$ denote the kernel of $\beta(p) \colon V(p) \twoheadrightarrow F(p)$. Then quotients $K_p \twoheadrightarrow \bc$ are in bijection with surjective maps $V(p) \twoheadrightarrow F(p) \oplus \bc$ that extend $\beta(p)$, and these quotients are parametrized by $\bp(K_p^*) \cong \bp^{r-2}$. Thus, for each $Z$, there is an $\ell(r-2)$-dimensional family of quotients $V \twoheadrightarrow F \oplus \oo_Z$. But there is a $2\ell$-dimensional family of choices of $Z$, so altogether we get an $\ell r$-dimensional family of quotients (as a Grassmann bundle) that must be a subscheme of $\Quot(V,k)$. This is a contradiction.
\end{proof}

\begin{rem}\label{quot-empty}
Two immediate consequences of the lemma are (1) $\Quot(V,k)$ is empty for all $k \gg 0$ and (2) if $\Quot(V,k)$ is empty, then $\Quot(V,k+\ell)$ is empty for all $\ell \ge 0$.
\end{rem}

Now, assume that $V$ satisfies the condition (\ref{no-neg-quot}). This guarantees that for every closed point of $S_{V,k}$, the image of $V \to I_Z$ is an ideal sheaf of points. It also ensures that for every closed point $V \twoheadrightarrow F$ of $\Quot(V,k)$, $F$ is an extension of a zero-dimensional torsion sheaf by an ideal sheaf of points.
Thus every closed point in $S_{V,k}$ or $\Quot(V,k)$ induces a closed point of $U_{V,k+\ell}$ for some $\ell \ge 0$, as follows: for $S_{V,k}$, replace the target sheaf by the image; for $\Quot(V,k)$, replace the target sheaf by its torsion-free quotient. Combining this observation with the previous lemma, we conclude the following:

\begin{prop}\label{svk-equals-quot} Suppose $V$ has no cosections that vanish on curves.
\begin{enumerate}[(a)]
\item The moduli space $S_{V,k}$ is empty if and only if $\Quot(V,k)$ is empty.
\item If $\Quot(V,k+1)$ (equivalently, $S_{V,k+1}$) is empty, then $S_{V,k} \cong \Quot(V,k)$.
\item If $\Quot(V,k)$ is 0-dimensional, then $S_{V,k} \cong \Quot(V,k)$.
\end{enumerate}
\end{prop}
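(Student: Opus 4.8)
The plan is to use the common open subscheme $U_{V,k}$ of $\svk$ and $\Quot(V,k)$ as a bridge, reducing all three parts to statements about when $U_{V,k}$ fills up each space. The discussion preceding the proposition already records, for a point of either space, a \emph{forward} recipe producing a point of $U_{V,k+\ell}$ for some $\ell\ge 0$: take the image of $\alpha$ for $\svk$, and the torsion-free quotient of $F$ for $\Quot(V,k)$; under (\ref{no-neg-quot}) and Lemma~\ref{ideal-sheaf}, both outputs are honest surjections $V\twoheadrightarrow I_W$ onto ideal sheaves of points, so these land in $U_{V,m}$ with $m=\operatorname{length}W\ge k$. First I would supply the two \emph{reverse} constructions starting from a point $V\twoheadrightarrow I_W$ of $U_{V,m}$, $m\ge k$. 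For $\svk$, choose any length-$k$ subscheme $Z\subseteq W$ (such subschemes of every intermediate length exist for a $0$-dimensional scheme); then $I_W\subseteq I_Z$ and the composite $V\twoheadrightarrow I_W\hookrightarrow I_Z$ is a stable pair, since its cokernel $I_Z/I_W$ is $0$-dimensional, so it defines a point of $\svk$. For $\Quot(V,k)$, set $K=\ker(V\twoheadrightarrow I_W)$, which is torsion-free of rank $\rk V-1\ge 1$, and pick a $0$-dimensional quotient $K\twoheadrightarrow T$ of length $m-k$ (a sum of skyscrapers at general points where $K$ is locally free). Writing $K'=\ker(K\twoheadrightarrow T)$, the sheaf $F:=V/K'$ sits in $0\to T\to F\to I_W\to 0$, is a quotient of $V$ with $\det F=\oo_X$, and by the Chern-class bookkeeping underlying Lemma~\ref{rank1_classification} has $c_2(F)=m-(m-k)=k$, hence defines a point of $\Quot(V,k)$.

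Combining the forward and reverse constructions shows that $\svk$ is nonempty if and only if $U_{V,m}\neq\emptyset$ for some $m\ge k$, and likewise for $\Quot(V,k)$; chaining the two equivalences proves part (a). For part (b), assume $\Quot(V,k+1)$ (equivalently $S_{V,k+1}$) is empty. By Remark~\ref{quot-empty}(2) every $\Quot(V,m)$ with $m\ge k+1$ is empty, hence $U_{V,m}=\emptyset$ for all $m\ge k+1$. Running a point of $\svk$ through the forward recipe lands it in some $U_{V,k+\ell}$, forcing $\ell=0$; thus $\alpha$ is already surjective and the point lies in $U_{V,k}$. The same reasoning shows the torsion-free quotient of any point of $\Quot(V,k)$ has length exactly $k$, so every quotient is pure. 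Therefore $U_{V,k}$ is set-theoretically all of $\svk$ and all of $\Quot(V,k)$. Since $U_{V,k}$ is an open subscheme of each, and an open immersion that is surjective on the underlying topological space is an isomorphism, we obtain $\svk\cong U_{V,k}\cong\Quot(V,k)$, the composite restricting to the identity on $U_{V,k}$. Part (c) then reduces to (b): if $\Quot(V,k)$ is $0$-dimensional, the dimension lemma preceding Remark~\ref{quot-empty} (applied with $d=0$, using $\rk V\ge 2$) gives $\Quot(V,k+1)=\emptyset$.

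The most delicate step is the reverse construction in part (a). I must verify that the torsion $T$ is a genuine quotient of $K$ realizing the prescribed length with $\det T=\oo_X$, so that $F$ has the right rank, trivial determinant, and $c_2=k$, landing it in the correct component of $\Quot(V,k)$; the torsion-freeness of $K$ inherited from $V$, together with condition (\ref{no-neg-quot}) via Lemma~\ref{ideal-sheaf}, is exactly what keeps every sheaf produced a rank-one sheaf with no curve components. By contrast, the upgrade from a bijection on closed points to an isomorphism of schemes in parts (b) and (c) is soft, relying only on the open-immersion-plus-surjectivity principle rather than on any deformation-theoretic comparison, so I expect no trouble there. The remaining routine checks are the Chern-class computations and the existence of length-$k$ subschemes of a length-$m$ scheme, which I would state without belaboring.
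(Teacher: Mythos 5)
Your proof is correct and takes essentially the same route as the paper's: both treat $U_{V,k}$ as the bridge, using the forward recipe (image of $\alpha$, resp.\ torsion-free quotient of $F$) together with the reverse constructions (composing $V \twoheadrightarrow I_W \hookrightarrow I_Z$, resp.\ adding $0$-dimensional torsion to the quotient) for (a), emptiness of $U_{V,k+\ell}$ via Remark~\ref{quot-empty} for (b), and the dimension lemma with $d=0$ for (c). You merely spell out the Chern-class bookkeeping and the surjective-open-immersion upgrade that the paper's terse proof leaves implicit.
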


\begin{proof}

(a) As mentioned above, a closed point of $S_{V,k}$ or $\Quot(V,k)$ induces a point of $U_{V,k+\ell}$ for some $\ell \ge 0$. But such a point can be used to construct a point of both $S_{V,k}$ (by embedding one ideal sheaf in another) and $\Quot(V,k)$ (by adding torsion to the quotient).

(b) Every closed point in $S_{V,k}$ or $\Quot(V,k)$ that is in the complement of $U_{V,k}$ induces a point of $U_{k+\ell}$ for some $\ell > 0$, but the assumption, (a), and Remark \ref{quot-empty} ensure that $U_{k+\ell}$ is empty for all $\ell > 0$. Thus $S_{V,k} \cong U_{V,k} \cong \Quot(V,k)$.

(c) By the previous lemma, $\Quot(V,k+\ell)$ is empty for all $\ell > 0$. Then apply (b).
\end{proof}

\subsection{Constructing $V$ with good properties}

Since local freeness is one of the conditions we imposed on $V$ in \S\ref{section_pot_vfc}, we will state the results in this subsection in terms of the dual sheaf $V^*$. Note that $V$ has no cosections vanishing on curves if and only if $V^*$ has no sections vanishing on curves.

\begin{prop} Let $X$ be a surface such that $h^1(\oo_X)=h^2(\oo_X)=0$. Let $r \ge 2$, let $L$ be a globally-generated line bundle on $X$ with vanishing higher cohomology, and let $W$ be a general 0-dimensional subscheme of $X$ of length $w$. Define $V^*$ as a general extension
\[
    0 \to \oo_X^{r-1} \to V^* \to L \otimes I_W \to 0.
\]
Then:
\begin{enumerate}[(a)]
    \item if $w > \chi(L \otimes \omega_X)$, then there are non-split extensions and a general extension is locally free;
    \item if $w \le \chi(L)$, then $V^*$ has vanishing higher cohomology;
    \item if the linear system of curves in $\bp H^0(X,L)$ that contain $W$ has no additional base points, then $V^*$ is globally generated.
\end{enumerate}

\end{prop}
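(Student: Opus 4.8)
The plan is to read (a), (b), (c) off the defining extension $0\to\oo_X^{r-1}\to V^*\to L\otimes I_W\to 0$ and its associated long exact sequences, treating the three parts separately. Throughout I use the standing hypothesis $h^1(\oo_X)=h^2(\oo_X)=0$ together with the fact that a general collection of at most $h^0$ points imposes independent conditions on a globally generated line bundle.

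For (a), extensions are classified by $\Ext^1(L\otimes I_W,\oo_X^{r-1})=\Ext^1(L\otimes I_W,\oo_X)^{\oplus(r-1)}$, and Serre duality gives $\Ext^1(L\otimes I_W,\oo_X)\cong H^1(L\otimes I_W\otimes\omega_X)^\vee$. Twisting $0\to I_W\to\oo_X\to\oo_W\to 0$ by $L\otimes\omega_X$ yields $\chi(L\otimes I_W\otimes\omega_X)=\chi(L\otimes\omega_X)-w$, which is negative exactly when $w>\chi(L\otimes\omega_X)$; since $\chi=h^0-h^1+h^2$, negativity of the Euler characteristic forces $h^1(L\otimes I_W\otimes\omega_X)>0$, so the extension group is nonzero and non-split extensions exist. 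For local freeness I would invoke the Serre construction: on a smooth surface $V^*$ is locally free iff it is reflexive, and the only locus where reflexivity can fail is $W$. The local-to-global $\lExt$ spectral sequence identifies the local obstruction with the image of the extension class in $H^0(\lExt^1(L\otimes I_W,\oo_X^{r-1}))$, where $\lExt^1(L\otimes I_W,\oo_X)\cong (L\otimes\omega_X)^{-1}|_W$ is a length-$w$ sheaf with one-dimensional stalk at each reduced point; $V^*$ is locally free precisely when this image is nonzero at every point of $W$. For a general $W$ and a general extension class this pointwise nonvanishing (a Cayley--Bacharach-type condition relative to $|L\otimes\omega_X|$) holds, giving local freeness. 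I expect this to be the main obstacle: it is the only step requiring control of the extension class pointwise on $W$ rather than merely cohomologically, and the genericity of $W$ is essential here.

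For (b), the hypothesis that $L$ has vanishing higher cohomology gives $\chi(L)=h^0(L)$, so $w\le\chi(L)$ means $w\le h^0(L)$. Hence for general $W$ the evaluation $H^0(L)\to L|_W$ is surjective, and the sequence $0\to L\otimes I_W\to L\to L|_W\to 0$ shows $H^1(L\otimes I_W)=\mathrm{coker}(\mathrm{ev})=0$ and $H^2(L\otimes I_W)=0$, using $H^1(L)=H^2(L)=H^1(L|_W)=0$. Feeding this into the cohomology sequence of the defining extension and using $H^1(\oo_X)=H^2(\oo_X)=0$ gives $H^i(V^*)\cong H^i(L\otimes I_W)=0$ for $i=1,2$, which is the vanishing of higher cohomology of $V^*$.

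For (c), the hypothesis that the curves through $W$ have no base points beyond $W$ says, together with generality of $W$ at its own points, that $L\otimes I_W$ is globally generated as a sheaf. I would then compare evaluation maps: since $H^1(\oo_X^{r-1})=0$, the defining extension induces a short exact sequence $0\to H^0(\oo_X)^{r-1}\otimes\oo_X\to H^0(V^*)\otimes\oo_X\to H^0(L\otimes I_W)\otimes\oo_X\to 0$ mapping to it. The left evaluation is an isomorphism and the right evaluation is surjective by global generation of $L\otimes I_W$, so the snake lemma forces the middle evaluation $H^0(V^*)\otimes\oo_X\to V^*$ to be surjective, i.e. $V^*$ is globally generated. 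This part requires no local freeness and is the most direct of the three.
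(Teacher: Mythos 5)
Your proposal follows the paper's route: (b) and (c) are correct and essentially identical to the paper's proof --- in fact your treatment of (c) via the snake lemma is more explicit than the paper's one-line ``extension of globally generated sheaves is globally generated,'' which implicitly needs $h^1(\oo_X)=0$ to lift sections of $L \otimes I_W$, exactly as you use it. In (a), your existence argument for non-split extensions (Serre duality plus $\chi(L\otimes I_W \otimes \omega_X) = \chi(L \otimes \omega_X) - w < 0$ forcing $h^1 > 0$) matches the paper's computation $\ext^1(L \otimes I_W, \oo_X) = w - \chi(L\otimes\omega_X) > 0$, and your reduction of local freeness to nonvanishing of the induced section of $\lExt^1(L\otimes I_W,\oo_X^{r-1})$ on $W$ is the correct criterion.

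However, the local-freeness step contains a genuine gap, and it is the one you yourself flagged: you assert that the pointwise nonvanishing ``holds for a general $W$ and a general extension class,'' but never prove it --- and, tellingly, your argument for this step makes no use of the hypothesis $w > \chi(L \otimes \omega_X)$. Genericity alone cannot suffice: if $w \le h^0(L\otimes\omega_X)$, then even for general (reduced) $W$ there is, for each $p \in W$, a section of $L\otimes\omega_X$ vanishing on $W \smallsetminus \{p\}$ but not at $p$; the Cayley--Bacharach condition then fails at every point of $W$, the evaluation $\Ext^1(L\otimes I_W,\oo_X) \to \lExt^1(L\otimes I_W,\oo_X)\otimes k(p)$ is zero, and \emph{no} extension is locally free. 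The paper closes precisely this step: since $L\otimes\omega_X$ has vanishing higher cohomology, $h^0(L\otimes\omega_X) = \chi(L\otimes\omega_X) \le w-1$, so for any colength-one subscheme $W' \subset W$ (a general length-$(w-1)$ scheme, by generality of $W$) the points of $W'$ impose independent conditions and $h^0(L\otimes\omega_X\otimes I_{W'}) = 0$. Hence $(L\otimes\omega_X, W)$ satisfies the Cayley--Bacharach property, the evaluation above is nonzero at each of the $w$ points, and a general class (a general $(r-1)$-tuple of classes, for $r \ge 3$) yields a locally free $V^*$. This two-line numerical verification is the actual content of (a)'s local-freeness claim, and your proof is incomplete without it.
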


\begin{proof}
(a) Note that $\ext^1(L\otimes I_W,\oo_X) = h^1(X,L \otimes \omega_X \otimes I_W) = w-\chi(L \otimes \omega_X) > 0$ since $L \otimes \omega_X$ has vanishing higher cohomology by Kodaira vanishing and $W$ is general. Thus there are non-split extensions. Similarly, the inequality $w-1 \ge \chi(L \otimes \omega_X) \ge 0$ implies that $h^0(X,L \otimes \omega_X \otimes I_{W'}) = 0$ for any subscheme $W' \subset W$ of length $w-1$. Therefore, $(L \otimes \omega_X,W)$ satisfies the Cayley-Bacharach property. It follows that a general extension is locally free.

(b) Since $L$ has vanishing higher cohomology, $W$ is general, and $w \le \chi(L)$, $L \otimes I_W$ also has vanishing higher cohomology, and the same is true for $\oo_X$ by assumption. This implies that $V^*$ has vanishing higher cohomology.

(c) The assumptions ensure that $V^*$ is an extension of globally generated sheaves, hence is globally generated.
\end{proof}

This method of constructing $V^*$ is quite natural:
If $V^*$ is globally generated of rank $r$, then a general choice of $r-1$ sections of $V^*$ yields a short exact sequence $0 \to \oo_X^{r-1} \to V^* \to L \otimes I_W \to 0$ for some line bundle $L$ and some 0-dimensional subscheme $W \subset X$.

\begin{prop}
\label{P2-good-properties}
Assume $X=\bp^2$.
Let $r \ge 2$, $d \ge 1$, and
\[
    \binom{d+1}{2} \le w \le \binom{d+2}{2} - 3 + \epsilon, \qquad \epsilon = \begin{cases}
    1 & \text{if $d=1,2$;} \\
    0 & \text{otherwise}.
    \end{cases}
\]
Let $W \subset \bp^2$ be a general 0-dimensional subscheme of length $w$. Then a general extension
\[
    0 \to \oo_{\bp^2}^{r-1} \to V^* \to \oo_{\bp^2}(d) \otimes I_W \to 0
\]
is locally free and globally generated, has vanishing higher cohomology, and has no nonzero sections vanishing on curves.
\end{prop}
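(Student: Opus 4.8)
The plan is to specialize the preceding proposition to $X=\bp^2$ with $L=\oo_{\bp^2}(d)$, and then to supply by hand the two properties it does not address: the absence of sections vanishing on curves, and the base-point hypothesis needed for its part~(c). Its hypotheses hold, since $h^1(\oo_{\bp^2})=h^2(\oo_{\bp^2})=0$ and, for $d\ge 1$, $\oo_{\bp^2}(d)$ is globally generated with no higher cohomology. Because $\omega_{\bp^2}=\oo_{\bp^2}(-3)$, we have $\chi(L\otimes\omega_X)=\binom{d-1}{2}$ and $\chi(L)=\binom{d+2}{2}$. Local freeness then follows from part~(a) once I check $w>\binom{d-1}{2}$, which is immediate from $w\ge\binom{d+1}{2}$ together with $\binom{d+1}{2}-\binom{d-1}{2}=2d-1>0$; and vanishing of higher cohomology follows from part~(b) once I check $w\le\binom{d+2}{2}$, which holds since $w\le\binom{d+2}{2}-3+\epsilon<\binom{d+2}{2}$. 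These numerical checks are routine.

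For the absence of sections vanishing on curves, I would dualize Lemma~\ref{ideal-sheaf}: $V^*$ has no such sections iff $V$ has no cosection $V\to\oo_{\bp^2}(-D)$ with $D>0$ effective, i.e. iff $H^0(V^*(-m))=0$ for all $m\ge 1$. On $\bp^2$ this reduces to $H^0(V^*(-1))=0$, since multiplying a section of $V^*(-m)$ by a nonzero form of degree $m-1$ produces a nonzero section of $V^*(-1)$. Twisting the defining sequence by $\oo_{\bp^2}(-1)$ and using $H^0(\oo_{\bp^2}(-1))=0$ gives an injection $H^0(V^*(-1))\hookrightarrow H^0(\oo_{\bp^2}(d-1)\otimes I_W)$. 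As $w\ge\binom{d+1}{2}=h^0(\oo_{\bp^2}(d-1))$ and a general $W$ imposes independent conditions, $h^0(I_W(d-1))=0$; this is precisely where the lower bound on $w$ is used.

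The substantive step is the base-point hypothesis of part~(c): for general $W$ the degree-$d$ curves through $W$ should have no common zero off $W$. I would rephrase this as the condition that for every $p\notin W$ the scheme $W\cup\{p\}$ imposes independent conditions on $|\oo_{\bp^2}(d)|$, equivalently, writing $\nu_d\colon\bp^2\hookrightarrow\bp^{N-1}$ ($N=\binom{d+2}{2}$) for the $d$-uple Veronese, that the span $\langle\nu_d(W)\rangle$ meets the Veronese surface only in $\nu_d(W)$. For $d\ge 3$ (so $\epsilon=0$ and $w\le N-3$) I would run a dimension count on the incidence variety $\Sigma=\{(W,p): p\notin W,\ \nu_d(p)\in\langle\nu_d(W)\rangle\}\subset X^{[w]}\times\bp^2$: projecting from $\nu_d(p)$ converts the condition into a linear dependency among $w$ points of a nondegenerate projected Veronese surface in $\bp^{N-2}$, whose locus has the expected codimension $N-w$, so the fiber over each $p$ has dimension $\le 3w-N$ and $\dim\Sigma\le 3w-N+2<2w=\dim X^{[w]}$. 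Hence $\Sigma\to X^{[w]}$ is not dominant and a general $W$ has no extra base point. The cases $d=1,2$ (where $\epsilon=1$ permits $w=N-2$) I would settle by direct inspection: for $d=1$ the only value is $w=1$, a pencil of lines with base locus the single point; for $d=2$ the values $w=3,4$ give, respectively, a net and a pencil of conics whose base locus is exactly $W$, the latter arising as the complete intersection of two conics.

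The hard part is the fiber bound in this last count, namely that the locus of unexpected dependencies $\nu_d(p)\in\langle\nu_d(W)\rangle$ genuinely has the expected codimension; this relies on the non-defectivity and general-position behaviour of the Veronese surface. It is sharp exactly at $w=N-2$: for $d\ge 3$ a general codimension-two span meets the Veronese surface in $d^2>N-2$ points, forcing extra base points, whereas for $d=1,2$ one has $d^2=N-2=w$ so these intersection points coincide with $W$ itself. This dichotomy is what the correction term $-3+\epsilon$ records. An alternative that avoids the transversality verification would be to exhibit, for each admissible $w$, a single configuration $W_0$ with base locus exactly $W_0$ and conclude by semicontinuity, but producing such $W_0$ uniformly across the whole range appears no easier than the count above.
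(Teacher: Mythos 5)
Your first two paragraphs coincide with the paper's proof: parts (a) and (b) of the preceding proposition are applied with the same numerical checks, and the paper rules out sections vanishing on curves by exactly your argument, namely the injection $H^0(V^*(-1)) \hookrightarrow H^0(\oo_{\bp^2}(d-1) \otimes I_W) = 0$ from $w \ge \binom{d+1}{2} = h^0(\oo_{\bp^2}(d-1))$, plus multiplication by a form to reduce degree $m$ to degree $1$. The divergence is in the extra-base-point statement. The paper disposes of it by citing Lemma~\ref{nets-no-base-points} (Coppens): restrict the series to a general smooth degree-$d$ curve $C$ through the points, and bound the locus $V_{s-1}^{s-2}(\delta_C)$ of divisors failing to impose independent conditions via a generalization of the trisecant lemma. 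You replace this with an incidence count on $\Sigma \subset X^{[w]} \times \bp^2$, and there your key step is false as stated. You claim the fiber of $\Sigma$ over a fixed $p$ has dimension at most $3w-N$, i.e.\ that the dependency locus $\{W : \nu_d(p) \in \langle \nu_d(W)\rangle\}$ has the expected determinantal codimension $N-w$ in $X^{[w]}$. Take $d=3$, $w=6$ (which is in your range, $\binom{4}{2}=6 \le w \le N-3=7$): fix a line $L$ through $p$, take four distinct points on $L$ and two free points. Every cubic through such $W$ meets $L$ in at least four points, hence contains $L$ and passes through $p$; so $\nu_3(p)\in\langle\nu_3(W)\rangle$ for every member of this family, which has dimension $1+4+4=9 > 8 = 3w-N$. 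The fiber bound fails, so your inequality $\dim \Sigma \le 3w-N+2 < 2w$ does not follow.

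The underlying logical problem is that your justification points in the wrong direction: determinantal loci automatically have codimension \emph{at most} the expected value, so the inequality you need (codimension at least $N-w$) is precisely the nontrivial content, and it must hold on \emph{every} stratum of configurations $W$, not just general ones. Non-defectivity of the Veronese (Alexander--Hirschowitz-type statements) controls only generic configurations and gives nothing about degenerate strata such as collinear or co-conic points, which is where the failure above occurs. The conclusion you want may still be rescued: in the example, $\dim \Sigma \le 9+2 = 11 < 12 = \dim X^{[6]}$, so non-dominance survives, but verifying this across all strata (points on lines, conics, etc., for all $d$ and all admissible $w$) is a genuine case analysis, and that uniform control is exactly what the paper's appeal to the generalized trisecant lemma on a general curve $C$ supplies in one stroke. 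As written, your route has a gap at its central step; your numerical reductions, the $d=1,2$ inspection, and the sharpness discussion at $w=N-2$ are all fine.
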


\begin{proof} Local freeness and vanishing higher cohomology follow immediately from the previous proposition. Nets of degree $d$ curves defined by imposing general base points have no further base points by Lemma \ref{nets-no-base-points} (this is true even for pencils when $d = 1,2$), so $V^*$ is globally generated. Tensoring the short exact sequence by $\oo_{\bp^2}(-1)$ and taking cohomology yields
\[
    H^0(X,V^*(-1)) \cong H^0(X,\oo_{\bp^2}(d-1) \otimes I_W),
\]
and the latter is 0 due to the condition $w \ge \binom{d+1}{2} = h^0(\bp^2,\oo_{\bp^2}(d-1))$. Thus $V^*$ has no sections that vanish on lines, hence it has no sections that vanish on curves.
\end{proof}

\begin{lem}[\cite{Cop95} 3.3]\label{nets-no-base-points} Let $\delta = \bp H^0(\bp^2,\oo_{\bp^2}(d))$ denote the complete linear series, which has dimension $s = \binom{d+2}{2}-1$. Let $p_1,\dots,p_{s-2}$ be general points in $\bp^2$. Then the linear series $\delta(p_1,\dots,p_{s-2})$ of curves of degree $d$ that contain all the $p_i$ has no other base points.
\end{lem}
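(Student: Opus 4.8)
The plan is to translate the statement into the language of the Veronese embedding, where ``base point'' becomes ``membership in a linear span,'' and then to run a dimension count on an incidence variety. Set $s = \binom{d+2}{2}-1$ and $N = s-2$, and let $\nu_d \colon \bp^2 \hookrightarrow \bp^s = \bp(H^0(\bp^2,\oo_{\bp^2}(d))^*)$ be the degree-$d$ Veronese embedding, sending $p$ to its evaluation functional $\mathrm{ev}_p$; write $V = \nu_d(\bp^2)$. For $q \in \bp^2$, every degree-$d$ curve through $p_1,\dots,p_{s-2}$ passes through $q$ exactly when $\mathrm{ev}_q$ vanishes on the common kernel $\bigcap_i \ker \mathrm{ev}_{p_i}$, i.e.\ when $\mathrm{ev}_q \in \langle \mathrm{ev}_{p_1},\dots,\mathrm{ev}_{p_{s-2}}\rangle$. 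Thus, writing $\Lambda = \langle \nu_d(p_1),\dots,\nu_d(p_{s-2})\rangle$ for the projective span, $q$ is a base point of $\delta(p_1,\dots,p_{s-2})$ if and only if $\nu_d(q) \in \Lambda$. Since $N$ general points of the nondegenerate surface $V$ impose independent conditions, $\Lambda$ is a $\bp^{s-3}$, and the statement to prove becomes: for general $p_i$, the intersection $V \cap \Lambda$ consists of exactly the $s-2$ points $\nu_d(p_i)$.

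Next I would set up the incidence variety
\[
    I = \{(p_1,\dots,p_{s-2},q) \in (\bp^2)^{s-1} : \nu_d(q) \in \langle \nu_d(p_1),\dots,\nu_d(p_{s-2})\rangle\}
\]
together with its off-diagonal part $I^\circ = I \cap \{q \neq p_i \ \forall i\}$. Writing $\mathrm{pr}$ for the projection to the first $s-2$ factors, the lemma is equivalent to $\mathrm{pr}(I^\circ)$ not being dense, for which it suffices to show $\dim I^\circ < 2(s-2)$. Projecting $I^\circ$ instead onto the $q$-factor and bounding the fibers, this reduces to showing that for each $q$ the locus of configurations $(p_i)$ with $\nu_d(q) \in \Lambda$ has codimension at least three in $(\bp^2)^{s-2}$; after forgetting one point, this in turn amounts to the claim that a $\bp^{s-3}$ spanned by $s-2$ general points of $V$ meets $V$ in a finite set.

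The main obstacle is exactly this finiteness/transversality: controlling $\dim(V \cap \Lambda)$ for a linear space $\Lambda$ forced to contain many points of $V$. A general $\bp^{s-3}$ misses $V$ (expected dimension $2 - 3 < 0$), but $\Lambda$ is highly special, and a positive-dimensional intersection would force $\Lambda$ to contain the Veronese image $\nu_d(\Gamma)$ of a plane curve $\Gamma$ of degree $\le d-1$. One could try to rule this out by a Kleiman-type genericity argument, but that is complicated here by the constraint that the spanning points lie on $V$ rather than moving freely. The approach I would actually pursue, being more robust and natural on $\bp^2$, is degeneration: by the openness of the condition ``the base scheme is reduced of length $s-2$,'' it suffices to exhibit a \emph{single} configuration of $s-2$ distinct points, imposing independent conditions, whose net of degree-$d$ curves has base locus exactly those points. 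I would build such a configuration by distributing the points on a union of lines so that every degree-$d$ member is forced, by Bézout (having more than $d$ of the points on a line), to contain those lines; this splits off linear factors, reducing the verification to a lower-degree net and setting up an induction on $d$ whose base cases $d=1,2$ are direct. Producing this configuration, computing its base locus, and checking independence of conditions is where essentially all the work lies.
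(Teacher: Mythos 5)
Your Veronese translation and the openness framework are fine: over the locus of configurations of $s-2$ distinct points imposing independent conditions, the condition ``the base scheme of the net is reduced of length $s-2$'' is indeed open, by semicontinuity of fiber dimension and fiber length of the universal base scheme. The fatal gap is in the special configuration you propose to exhibit. If more than $d$ of the points lie on a line $L$, then by B\'ezout \emph{every} member of the net contains $L$, so $L$ itself is a fixed component and hence lies in the base locus. Thus the very mechanism you use to split off linear factors guarantees that your configuration has one-dimensional base scheme (the split-off lines union the residual base locus), so it does \emph{not} lie in the good open locus, and openness transfers nothing: semicontinuity goes the wrong way here, since base schemes only grow under specialization, and exhibiting a degeneration with large base locus is vacuous. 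Repairing this would require either a special configuration with at most $d$ points on any line, at most $2d$ on any conic, etc.\ --- at which point computing its base locus directly is essentially the original problem --- or a genuinely finer limit argument tracking where a hypothetical unassigned base point of the general net specializes; it can escape into the fixed line $L$, which is precisely the case your setup cannot exclude.

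A secondary problem, which you partly flagged: in the incidence-variety step, finiteness of $V \cap \Lambda$ yields only codimension $2$, not the codimension $3$ you need. Fixing $p_1,\dots,p_{s-3}$ generic, the condition $\nu_d(q) \in \Lambda$ becomes $\nu_d(p_{s-2}) \in \langle \nu_d(p_1),\dots,\nu_d(p_{s-3}),\nu_d(q)\rangle$, and finiteness of the intersection of $V$ with this $\bp^{s-3}$ constrains $p_{s-2}$ to finitely many points --- two conditions, so the fiber over $q$ has codimension $2$ and $\dim I^\circ \le 2(s-2)$, which does not force the image to be non-dense. Getting codimension $3$ would require knowing that a $\bp^{s-3}$ spanned by $s-2$ points of $V$ (one of them $\nu_d(q)$) meets $V$ \emph{only} in those points, which by your own dictionary is the lemma itself, so that route is circular. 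For contrast, the paper avoids both issues by restricting $\delta$ to a general smooth member $C$ through the $p_i$ (so the $p_i$ are general on $C$) and showing that an extra base point would produce an $(s-2)$-dimensional family of degree-$(s-1)$ divisors failing to impose independent conditions on the very ample series $\delta_C$, contradicting the generalized trisecant lemma $\dim V_e^{e-1}(\delta_C) \le e-2$; some such input about special divisors on curves (or an equivalently strong projective-geometry fact about the Veronese surface) appears unavoidable.
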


\begin{proof}
Let $C$ be a general smooth curve of degree $d$ that contains all the $p_i$. Since $C$ is general and the $p_i$ are general in $\bp^2$, the $p_i$ are also general in $C$. The restriction $\delta_C$ of $\delta$ to $C$ is very ample on $C$ and has dimension $s-1$. If $\delta(p_1,\dots,p_{s-2})$ has an additional base point $q$, then $q \in C$, and for all divisors $D \in \delta_C$ containing all the $p_i$, $D$ also contains $q$. But since the $p_i$ are general in $C$, this implies that the scheme $V_{s-1}^{s-2}(\delta_C)$ of effective divisors in $C$ of degree $s-1$ that fail to impose independent conditions on $\delta_C$ has dimension $\ge s-2$. This contradicts a generalization of the trisecant lemma, which states that $\dim V_{e}^{e-1}(\delta_C) \le e-2$ for all $e < s$ (\cite{ArbCorGri85} L-1, p.152).
\end{proof}

When $X$ is a del Pezzo surface, the condition that $V^*$ should have no sections vanishing on curves can be guaranteed by choosing the lower bound on $w$ with respect to a basis of the effective cone of $X$. This is not difficult to do, but as it requires a case-by-case analysis, we have not provided precise bounds. A similar upper bound for $w$ could also be established for each del Pezzo surface.



\vskip20pt
\section{Application to strange duality}

In this section, we review how counting the points of finite Quot schemes can be part of a strategy for proving Le Potier's strange duality conjecture. We then explain how $k$-fold point formulas were used in \cite{BerGolJoh16} to obtain ``expected'' counts of Quot schemes, and we discuss how the moduli spaces of stable pairs relate to the loci of $k$-fold points that appear in the construction in \cite{BerGolJoh16}. Finally, we use Theorem \ref{vfc} to count finite Quot schemes in this context, which, in combination with results in \cite{BerGolJoh16} and \cite{Joh18}, allows us to prove Theorem \ref{quot-equals-euler}. 

Unless otherwise mentioned, we impose the condition $H^1(\oo_X)=0$ on the smooth projective surface $X$ to simplify the exposition.

\subsection{Strange duality}

One source of motivation to count points of finite Quot schemes comes from Le Potier's strange duality conjecture.


Let $X$ be a smooth projective surface over $\mathbb{C}$. Let $e,f \in H^*(X,\mathbb{Q})$ be cohomology classes such that $\chi(e \cdot f) = 0$. For sheaves $E$ and $F$ with $\ch(E)=e$ and $\ch(F)=f$, this condition says that
\begin{equation}\label{orthogonal}
    \sum_{i=0}^2 (-1)^i h^i(E \otimes F)=0.
\end{equation}
Let $M(e)$ and $M(f)$ denote the moduli spaces of semistable coherent sheaves on $X$ with Chern characters $e$ and $f$, respectively. Suppose that these moduli spaces are nonempty and satisfy the following two conditions on pairs $(E,F) \in M(e) \times M(f)$:
\begin{enumerate}[(a)]
\item $h^2(E \otimes F) = 0$ and $\mathrm{Tor}^1(E,F) = \mathrm{Tor}^2(E,F)=0$ for all $(E,F)$ away from a subset of codimension $\ge 2$;
\item $h^0(E \otimes F)=0$ for some $(E,F)$.
\end{enumerate}
Then the ``jumping locus''
\[
    \Theta = \{([E],[F]) \in M(e) \times M(f) \mid h^0(X,E \otimes F) \ne 0 \}
\]
has the structure of a Cartier divisor. Then we can restrict $\Theta$ to general fibers $\pi_2^{-1}([F])$ and $\pi_1^{-1}([E])$ of the projection maps $\pi_i$ on $M(e) \times M(f)$ to define Cartier divisors $\Theta_F \subset M(e)$ and $\Theta_E \subset M(f)$. The corresponding line bundles, which are called \emph{determinant line bundles}, depend only on the Chern characters of $F$ and $E$, so they are independent of the choice of fiber and we denote them by $\oo_{M(e)}(\Theta_f)$ and $\oo_{M(f)}(\Theta_e)$.
Then, by the seesaw lemma, there is an isomorphism
\[
    \oo_{M(e) \times M(f)}(\Theta) \cong \pi_1^* \oo_{M(e)}(\Theta_f) \otimes \pi_2^* \oo_{M(f)}(\Theta_e),
\]
so the K\"{u}nneth formula implies that
\[
    H^0(M(e) \times M(f),\oo_{M(e) \times M(f)}(\Theta)) \cong H^0(M(e),\oo_{M(e)}(\Theta_f)) \otimes H^0(M(f),\oo_{M(f)}(\Theta_e)).
\]
The divisor $\Theta$ determines a unique (up to scaling) section of this line bundle, which induces a linear map
\[
    \operatorname{SD}_{e,f} \colon H^0(M(f),\oo_{M(f)}(\Theta_e))^* \to H^0(M(e),\oo_{M(e)}(\Theta_f))
\]
called the \emph{strange duality map}. One can check that this map sends the evaluation map $\operatorname{ev}_{[F]}$ to a scalar multiple of the section $s_F$ corresponding to the divisor $\Theta_F$.


\begin{conj}[Le Potier's strange duality, \cite{LP05}] If $\operatorname{SD}_{e,f}$ is nonzero, then it is an isomorphism.
\end{conj}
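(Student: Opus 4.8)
The plan is to deduce the isomorphism from two ingredients beyond the hypothesized nonvanishing: an equality of the dimensions of the two section spaces, and the injectivity of $\operatorname{SD}_{e,f}$. Since a nonzero linear map between vector spaces of equal dimension is an isomorphism exactly when it is injective, it suffices to establish $\dim H^0(M(e),\oo_{M(e)}(\Theta_f)) = \dim H^0(M(f),\oo_{M(f)}(\Theta_e))$ together with $\ker \operatorname{SD}_{e,f} = 0$. In the regime relevant here the rank-one side is the Hilbert scheme, $M(f) \cong X^{[k]}$, so the source of $\operatorname{SD}_{e,f}$ has dimension $h^0(X^{[k]},\oo_{X^{[k]}}(\Theta_e))$; this is precisely the number appearing in $(\dagger)$ and $(\ddagger)$, and Theorem~\ref{quot-equals-euler} identifies it with $\#\Quot(V,k)$ in the accessible cases. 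Thus the dimension count on one side is controlled directly by the point count of a finite Quot scheme.

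For injectivity I would follow the strategy of Marian--Oprea, using finite Quot schemes as test configurations. First I would take a sufficiently positive sheaf $V$ of the type produced in Proposition~\ref{P2-good-properties}, for which $\Quot(V,k)$ is finite. Each closed point is a quotient $V \twoheadrightarrow F$ whose kernel has dual $E$ satisfying the orthogonality relation $\chi(E \otimes F)=0$, so $[E]$ is a candidate point of $M(e)$. The theta sections $s_{F_j}$ attached to the corresponding points of $M(f)$, tested against these $[E_i]$, assemble into a square incidence matrix recording the vanishing pattern of $h^0(E_i \otimes F_j)$; by Theorem~\ref{quot-equals-euler} this matrix has size $\#\Quot(V,k) = h^0(X^{[k]},\oo_{X^{[k]}}(\Theta_e))$, which is exactly the dimension of the section space. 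If the matrix is nondegenerate---which would follow from the finite Quot scheme being reduced and the kernels being suitably generic---then the theta pairing induced by $\Theta$ is perfect and $\operatorname{SD}_{e,f}$ is an isomorphism. The virtual-class description in Theorem~\ref{vfc}, together with the rigorous point count of \S\ref{k-fold-points}, is what would supply the finiteness and reducedness needed to control this matrix.

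The hard part will be the semistability of the kernels. To interpret each $[E]$ as an honest point of $M(e)$, and hence to even make sense of the theta sections and the incidence matrix above, one must know that the duals of the kernels arising from $\Quot(V,k)$ are semistable; this is the gap flagged after Theorem~\ref{quot-equals-euler}, and I expect it to be the decisive obstacle, since it does not follow formally from finiteness and appears to require a direct stability analysis of the kernels produced by the quotients. A secondary difficulty is that the finiteness and reducedness of $\Quot(V,k)$ are established here only for $\bp^2$, for $r \ge 3$, and for $k \le 11$, so extending the argument to other del Pezzo surfaces and to all $k$ (the content of Conjecture~\ref{numbers-match}) would be needed before this route could address the conjecture in full generality. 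Realistically, therefore, this strategy yields injectivity of $\operatorname{SD}_{e,f}$ in the special cases reachable by the point-counting machinery, \emph{conditional} on semistability of the kernels, rather than a proof of Le Potier's conjecture in general.
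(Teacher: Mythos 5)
The statement you set out to prove is Le Potier's strange duality conjecture, which the paper records precisely as a \emph{conjecture} (citing \cite{LP05}) and does not prove; the paper itself notes that ``few cases of the strange duality conjecture have been proved, even on $\bp^2$,'' and that its main result, Theorem~\ref{quot-equals-euler}, gives only the equality of the first and third quantities in Conjecture~\ref{sd-numerical}. So there is no proof in the paper to compare against, and your proposal---as you concede in your final paragraph---is not a proof either, but a conditional strategy sketch. It is worth naming exactly where it fails.

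First, your reduction ``equal dimensions plus injectivity'' founders on the dimension equality itself: nothing in the paper, and nothing in your argument, controls $h^0(M(e),\oo_{M(e)}(\Theta_f))$. The point-counting machinery (Theorem~\ref{vfc}, Proposition~\ref{quot-count}, Theorem~\ref{quot-equals-euler}) identifies $\#\Quot(V,k)$ with $h^0(X^{[k]},\oo_{X^{[k]}}(\Theta_e))$, i.e.\ with the dimension of the \emph{source} $H^0(M(f),\oo_{M(f)}(\Theta_e))^*$ only; the paper states in so many words that ``we do not know how to relate these quantities to the number of sections of the determinant line bundle on $M(e)$.'' Without that, even a nondegenerate incidence matrix yields at best injectivity, never surjectivity. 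Second, the incidence-matrix argument presupposes that the dual kernels $E_i$ are semistable, so that $[E_i]\in M(e)$ makes sense and the sections $s_{F_j}$ can be evaluated there; as you correctly flag, this is open and does not follow from finiteness or reducedness of $\Quot(V,k)$. Third, even granting both points, the argument reaches only the special classes $(e,f)$ accessible on $\bp^2$ with $r\ge 3$, $d\gg 0$, $k\le 11$ (or all $k$ modulo Conjecture~\ref{numbers-match}), whereas the conjecture quantifies over all orthogonal pairs on any surface where the setup applies. In short, your proposal accurately reproduces the motivating strategy the paper describes (following Marian--Oprea and \cite{BerGolJoh16}), but what it would deliver, conditional on kernel semistability, is injectivity of $\operatorname{SD}_{e,f}$ in those special cases---which is precisely the open status the paper itself reports, not a proof of the conjecture.
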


See \cite{LP05, Dan02, Sca07} for more details on strange duality and \cite{HuyLeh10} for a discussion of determinant line bundles.

Few cases of the strange duality conjecture have been proved, even on $\bp^2$. For a sample of results related to strange duality, consult
\cite{Dan01, Yua16, Yua17b, Yua18, GotYua19, MarOpr13, MarOpr14, BolMarOpr17}.

\subsection{Relevance of Quot schemes to strange duality}
The following discussion is based on \cite{BerGolJoh16}, but the idea to use finite Quot schemes to study strange duality (on curves) first appeared in \cite{MarOpr07}.

Assuming a general sheaf in $M(e)$ is locally free, let $V$ be a sheaf with Chern character $e^* + f$, where we assume $\chi(e \cdot f)=0$ as in the previous subsection. Consider the Quot scheme $\Quot(V,f)$ parametrizing quotients $V \twoheadrightarrow F$ with $\ch(F)=f$. A point of the Quot scheme yields a short exact sequence
\begin{equation}\label{quot-extension}
    0 \to \tilde{E} \to V \to F \to 0.
\end{equation}
The orthogonality condition $\chi(E \otimes F)=0$ implies that
\[
    \sum_{i=0}^2 (-1)^i \ext^i(\tilde{E},F)=0
\]
and (assuming $\Ext^2(\tilde{E},F)$ vanishes for all $\tilde{E}$ and $F$) that the expected dimension of $\Quot(V,f)$ is 0. When the kernels and quotients are semistable, vanishing of the alternating sum should in general imply vanishing of each of the terms (since $\Theta$ has codimension 1), so
in particular $\Hom(\tilde{E},F)$, the Zariski tangent space to the Quot scheme, should be 0. Thus we expect $\Quot(V,f)$ to be finite and reduced.\footnote{We can only hope for this expected behavior if $V$ is general in some sense. In particular, we usually need each extension (\ref{quot-extension}) to be nonsplit, which is only possible if $\ext^1(F,\tilde{E}) > 0$. Since we need $\ext^1(\tilde{E},F) = 0$, Serre duality suggests that this is likelier to work if $\omega_X$ is negative.}

Assume that $\Quot(V,f)$ is indeed finite and reduced. Assume that all of the kernels are locally free. Then, writing $\Quot(V,f) = \{ [0 \to E_i^* \to V \to F_i \to 0] \}$ and assuming that the $E_i$ and $F_i$ are all semistable, one can observe that $([E_i],[F_j])$ lies on $\Theta$ if and only if $i \ne j$, which implies that the $\operatorname{ev}_{[F_i]}$ are linearly independent and that the restricted map
\[
    \operatorname{SD}_{e,f} \colon \operatorname{span}\{ \operatorname{ev}_{[F_i]}\} \to \operatorname{span} \{ s_{F_i} \}
\]
is an isomorphism. Thus, in this setting, strange duality reduces to the numerical statement
\begin{conj}\label{sd-numerical} In the setting above, suppose $\Quot(V,f)$ is finite and reduced. Then
\[
    \#\Quot(V,f) = h^0(M(e),\oo_{M(e)}(\Theta_f)) = h^0(M(f),\oo_{M(f)}(\Theta_e)).
\]
\end{conj}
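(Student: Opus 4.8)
The plan is to break the conjecture's chain of equalities into a half that the methods of this paper can reach and a half that is genuinely the content of strange duality. The key asymmetry is that $M(f)$ is a Hilbert scheme while $M(e)$ is a higher-rank moduli space, so I would treat them separately.

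First I would prove $\#\Quot(V,f) = h^0(M(f),\oo_{M(f)}(\Theta_e))$. Since $F$ has rank $1$, trivial determinant, and $c_2(F)=k$, and since $H^1(\oo_X)=0$, the moduli space $M(f)$ is identified with $X^{[k]}$ and $\oo_{M(f)}(\Theta_e)$ with the determinant line bundle $\oo_{X^{[k]}}(\Theta_e)$. Assuming $V$ has no cosections vanishing on curves and that $\Quot(V,f)$ is $0$-dimensional, Proposition~\ref{svk-equals-quot} gives an isomorphism $\svk \cong \Quot(V,f)$, so the count equals the length of $\svk$. Theorem~\ref{vfc} then identifies this length with the Euler-class integral $\int_{X^{[k]}} c_{2k}({V^*}^{[k]})$, which is ($\dagger$), and Johnson's numerical identity ($\ddagger$) rewrites it as $\chi(X^{[k]},\oo_{X^{[k]}}(\Theta_e))$. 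Replacing this Euler characteristic by $h^0$ requires the vanishing of the higher cohomology of the (sufficiently positive) determinant line bundle. On $\bp^2$ with $k \le 11$ this is exactly Theorem~\ref{quot-equals-euler}; beyond that range it is conditional on Johnson's Conjecture~\ref{numbers-match}.

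With the first equality established, the remaining equality $h^0(M(f),\oo_{M(f)}(\Theta_e)) = h^0(M(e),\oo_{M(e)}(\Theta_f))$ is the symmetry at the heart of strange duality, and I would try to extract it from the count together with the linear independence already noted before the conjecture. The evaluation maps $\operatorname{ev}_{[F_i]}$ at the finitely many points of $\Quot(V,f)$ are linearly independent in $H^0(M(f),\oo_{M(f)}(\Theta_e))^*$, and $\operatorname{SD}_{e,f}$ carries their span isomorphically onto $\operatorname{span}\{s_{F_i}\} \subseteq H^0(M(e),\oo_{M(e)}(\Theta_f))$. Once $\#\Quot(V,f) = h^0(M(f),\oo_{M(f)}(\Theta_e))$, these evaluation maps form a basis, so $\operatorname{SD}_{e,f}$ is injective on the whole space and the two cohomology groups coincide exactly when the theta sections $s_{F_i}$ span $H^0(M(e),\oo_{M(e)}(\Theta_f))$.

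The hard part will be precisely this spanning statement: it is the surjectivity half of strange duality, and nothing in the present toolkit reaches it. The computation of $\#\Quot(V,f)$ works only because $M(f)=X^{[k]}$ is realized as the zero locus of a section of an explicit vector bundle on a smooth ambient space (Proposition~\ref{zero-locus}), turning the count into a tautological integral. For $r = \rk V \ge 3$ the moduli space $M(e)$ of rank-$r$ sheaves admits no analogous presentation, so neither $\chi(M(e),\oo_{M(e)}(\Theta_f))$ nor a vanishing theorem for its higher cohomology is available by the same route, and a direct computation of $h^0(M(e),\oo_{M(e)}(\Theta_f))$ on higher-rank moduli is an essentially independent problem. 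In practice the missing input is a semistability statement for the kernels $E_i$ produced by the Quot scheme, the ingredient flagged after Theorem~\ref{quot-equals-euler}: it is this, rather than any further intersection-theoretic computation, that would let one conclude that the $s_{F_i}$ exhaust the sections and thereby close the conjecture.
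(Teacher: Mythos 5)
First, a point of framing: the statement you were asked to prove is labeled a \emph{conjecture} in the paper, and the paper does not prove it. What the paper does establish is the equality of the \emph{first and third} quantities, $\#\Quot(V,k) = h^0(X^{[k]},\oo_{X^{[k]}}(\Theta_e))$, on $\bp^2$ for $k \le 11$, $r \ge 3$, $d \gg 0$ (Theorem \ref{quot-equals-euler}), and it explicitly states that it does not know how to relate these quantities to $h^0(M(e),\oo_{M(e)}(\Theta_f))$. Your first half reconstructs the paper's argument for Theorem \ref{quot-equals-euler} exactly: Proposition \ref{svk-equals-quot} to identify $\Quot(V,k)$ with $S_{V,k}$, Theorem \ref{vfc} (via Proposition \ref{quot-count}) to compute the count as $\int_{X^{[k]}} c_{2k}({V^*}^{[k]})$, the verified cases of Conjecture \ref{numbers-match} to convert this to $\chi(X^{[k]},\oo_{X^{[k]}}(\Theta_e))$, and vanishing of higher cohomology of the determinant line bundle for $d \gg 0$ to replace $\chi$ by $h^0$. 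This matches the paper, including the honest acknowledgment that the conjecture in its full generality remains out of reach.

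The genuine flaw is in your closing claim about the second equality. You assert that semistability of the kernels $E_i$ is the missing input that ``would let one conclude that the $s_{F_i}$ exhaust the sections and thereby close the conjecture.'' That overstates what semistability buys. Granting semistability of all $E_i$ and $F_j$, the argument sketched before the conjecture shows $([E_i],[F_j]) \in \Theta$ if and only if $i \ne j$, whence the $\operatorname{ev}_{[F_i]}$ are linearly independent and $\operatorname{SD}_{e,f}$ is injective on their span; combined with your first equality, this makes the $\operatorname{ev}_{[F_i]}$ a basis and $\operatorname{SD}_{e,f}$ injective on all of $H^0(M(f),\oo_{M(f)}(\Theta_e))^*$, giving only the inequality $h^0(M(f),\oo_{M(f)}(\Theta_e)) \le h^0(M(e),\oo_{M(e)}(\Theta_f))$. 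This is precisely what the paper claims in the introduction: semistability of the kernels would imply \emph{injectivity} of the strange duality map. Surjectivity --- the spanning of $H^0(M(e),\oo_{M(e)}(\Theta_f))$ by the $s_{F_i}$ --- does not follow from semistability plus the count; it is the remaining open content of the conjecture, and would require either an independent computation of $h^0(M(e),\oo_{M(e)}(\Theta_f))$ or a dual counting argument on the higher-rank moduli space $M(e)$, neither of which the paper's rank-one techniques provide. So your diagnosis of where the difficulty lies is correct, but the cure you propose is insufficient to close it.
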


We study the case when $f = \ch(I_Z)$, where $Z$ is a 0-dimensional subscheme of length $k$. In this case, $M(f) = X^{[k]}$, and we abbreviate the notation for the Quot scheme by writing $k$ instead of $f = \ch(I_Z)$.

\subsection{Quot schemes and $k$-fold points}\label{k-fold-points}

Motivated by strange duality, Bertram, the first author, and Johnson \cite{BerGolJoh16} attempted to count the points of the Quot schemes $\Quot(V,k)$ on del Pezzo surfaces in cases where these Quot schemes have expected dimension 0. If the Quot scheme is indeed 0-dimensional, then every quotient is an ideal sheaf, and one expects that every quotient is an ideal sheaf of a reduced scheme. Then each point $[V \twoheadrightarrow I_Z]$ corresponds to a section $\oo_X \to V^*$ that vanishes at the $k$ distinct points $Z$.

In order to count sections of $V^*$ that vanish at $k$ distinct points, the authors make use of $k$-fold point formulas. Assuming that $V^*$ is globally generated and has vanishing higher cohomology, there is a short exact sequence
\[
    0 \to G \to H^0(X,V^*) \otimes \oo_X \to V^* \to 0.
\]
The closed points of $\bp(G)$, the projective bundle of lines in the fibers of the locally free sheaf $G$, are of the form $([s],p)$, where $s$ is a section of $V^*$ and $p$ is a point at which $s$ vanishes. There is a map
\[
    g \colon \bp(G) \to \bp H^0(X,V^*), \quad ([s],p) \mapsto [s]
\]
obtained by forgetting the point of the base $X$, and a section $\oo_X \to V^*$ that vanishes at $k$ distinct points $Z$ corresponds exactly to a point of $\bp H^0(X,V^*)$ at which the fiber of $g$ consists of $k$ reduced points. A point of $\bp H^0(X,V^*)$ with this property is called a \emph{$k$-fold point} of $g$.

In general, if $g \colon M \to N$ denotes a holomorphic map of complex manifolds, then the \emph{set of $k$-fold points of $g$} is defined as
\[
	N_k:=\left\{q \in N \colon \parbox{20em}{\centering $q$ has exactly $k$ distinct preimages $p_1,\dots,p_k$ and $g$ is an immersion at each $p_i$}\right\} \subset N.
\]
There are universal formulas that compute the cohomology class $[\overline{N}_k]$ of the closure of $N_k$ (in the analytic topology) in terms of Chern classes of the map $g$, but the most general formulas require $g$ to be suitably generic in a real analytic sense, which tends to be impossible to check for a holomorphic or algebraic map (in the literature, it is common practice to assume genericity). These formulas can be explicitly computed for $k \le 7$ by a method of Marangell and Rimanyi \cite{MarRim10}, but there is a serious difficulty in moving past $k=7$ because the classification of the possible singularities of the map $g$ is no longer discrete.

Returning to the map $g \colon \bp(G) \to \bp H^0(X,V^*)$ defined above, the fact that $\Quot(V,k)$ has expected dimension 0 implies that the set of $k$-fold points of $g$ has expected dimension 0 (we use the fact that $V^*$ has vanishing higher cohomology to ensure that $\bp H^0(X,V^*)$ has the expected dimension). The authors compute $k$-fold point formulas for $k \le 7$ in this setting to obtain an \emph{expected} count of the sections of $V^*$ that vanish at $k$ points (the word ``expected'' relates to the fact that genericity of $g$ cannot be checked) and thus also an \emph{expected} count of the points of $\Quot(V,k)$.

\subsection{Stable pairs and $k$-fold points}

In this subsection, we explain a close relationship between the sets of $k$-fold points of the map $g \colon \bp(G) \to \bp H^0(X,V^*)$ from the previous subsection and the spaces of stable pairs $S_{V,k}$. In order to keep track of $k$ in the notation, we write $\iota_{\bp,k} \colon S_{V,k} \to \bp$ for the maps that were previously denoted $\iota_{\bp}$.

Since sections of $V^*$ that vanish at a point are dual to cosections of $V$ that vanish at a point, a simple first observation is that $S_{V,1} \cong \bp(G)$. Moreover, $g$ coincides with $\iota_{\bp,1} \colon S_{V,1} \to \bp$.

\begin{rem} More generally, suppose $V^*$ is $(k-1)$-very ample, namely that the map $H^0(X,V^*) \otimes \oo_X \to V^*$ induces a surjection $H^0(X,V^*) \otimes \oo_{X^{[k]}} \to {V^*}^{[k]}$ (for example, $0$-very ampleness is equivalent to global generation). Let $G_k$ denote the kernel of the latter map. Then one can show that $S_{V,k}$ is isomorphic to $\bp(G_k)$, the projective bundle of lines in the fibers of $G_k$.
\end{rem}

\begin{lem}
Over a closed point $[s \colon V \to \oo_X] \in \bp$ with $\im s = I_W$,
the fiber of $\iota_{\bp,k} \colon S_{V,k} \to \bp$  is isomorphic to $S_{I_W,k}$.
Furthermore, it is isomorphic to $W^{[k]}$, which is the Hilbert scheme of length $k$ closed subschemes of $W$.
\end{lem}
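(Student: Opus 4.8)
The plan is to identify the fiber first with $S_{I_W,k}$ and then with $W^{[k]}$, in each case by constructing mutually inverse maps on points and then promoting the resulting bijection to a scheme isomorphism via the moduli functors. The key technical input throughout is the computation $\Hom(I_W,\oo_X)\cong\bc$, which follows from $\Hom(\oo_W,\oo_X)=\Ext^1(\oo_W,\oo_X)=0$ for a $0$-dimensional $W$ on a smooth surface (the first because $\oo_W$ is torsion and $\oo_X$ torsion free, the second from the local-to-global spectral sequence since $\lExt^0(\oo_W,\oo_X)=\lExt^1(\oo_W,\oo_X)=0$).

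A closed point of the fiber over $[s]$ is, by definition of $\iota_{\bp,k}$, a stable pair $(I_Z,\alpha\colon V\to I_Z)$ whose composite $V\xrightarrow{\alpha} I_Z\hookrightarrow\oo_X$ is a scalar multiple of $s$. Since $\im s = I_W$, I would factor $s = j_W\circ s'$, where $s'\colon V\twoheadrightarrow I_W$ is the corestriction and $j_W\colon I_W\hookrightarrow\oo_X$ the inclusion, and define a map $S_{I_W,k}\to(\text{fiber})$ by $(I_Z,\beta\colon I_W\to I_Z)\mapsto(I_Z,\beta\circ s')$. Because $s'$ is surjective, the cokernels of $\beta\circ s'$ and $\beta$ agree, so stability is preserved; and because $\Hom(I_W,\oo_X)\cong\bc$, the composite $I_W\xrightarrow{\beta}I_Z\hookrightarrow\oo_X$ is forced to be a scalar multiple of $j_W$, so $(I_Z,\beta\circ s')$ genuinely lands in the fiber over $[s]$. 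For the inverse, given $(I_Z,\alpha)$ in the fiber I note $\ker\alpha=\ker(j_Z\circ\alpha)=\ker s=\ker s'$, so $\alpha$ factors uniquely as $\beta\circ s'$ with $\beta\colon I_W\hookrightarrow I_Z$ injective, recovering a point of $S_{I_W,k}$; the two assignments are visibly inverse.

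For the second isomorphism, I would observe that in $S_{I_W,k}$ stability forces $\beta\colon I_W\to I_Z$ to be injective with $0$-dimensional cokernel, and that $\Hom(I_W,\oo_X)\cong\bc$ again shows that, after composing with $j_Z$ and rescaling, $\beta$ is an inclusion $I_W\subseteq I_Z$ of subsheaves of $\oo_X$. Thus $I_W\subseteq I_Z$, equivalently $Z\subseteq W$ as a length-$k$ subscheme, which sets up the bijection with $W^{[k]}$; conversely each length-$k$ subscheme $Z\subseteq W$ gives the inclusion pair $(I_Z, I_W\hookrightarrow I_Z)$.

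To upgrade both bijections to isomorphisms of schemes, I would run each construction in families over an arbitrary base $T$ using the universal families, checking that the assignments are inverse at the level of the representable functors $\frak{S}_{V,k}$, $\frak{S}_{I_W,k}$, and the Hilbert functor of $W$. The zero-locus description of Proposition~\ref{zero-locus} gives a convenient concrete handle: the fiber is the zero scheme inside $X^{[k]}$ of the section of ${V^*}^{[k]}$ induced by $s\in H^0(X,V^*)$, so its scheme structure is manifest and I would match it with that of $W^{[k]}$. The main obstacle is precisely this passage from points to schemes: one must verify that the factorization $\alpha_T=\beta_T\circ s'$ persists in flat families, so that $\ker s'$ and the surjectivity of $s'$ are stable under base change, and that the scheme structure cut out on the fiber is reduced onto $W^{[k]}$ rather than a thickening. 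The $\Hom$-vanishing above, together with flatness of $\oo_{\mathscr{Z}}$ over $X^{[k]}$, are what make these base-change and reducedness checks go through.
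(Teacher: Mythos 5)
Your proposal is correct and takes essentially the same route as the paper: factor a pair in the fiber through the image $I_W$ of $s$ to identify the fiber with $S_{I_W,k}$, observe that a nonzero map $I_W \to I_Z$ must be (a scalar multiple of) an inclusion so that points correspond to $Z \subseteq W$, i.e.\ to $W^{[k]}$, and then upgrade the bijections to scheme isomorphisms via the moduli functors and universal properties, just as the paper does by invoking arguments similar to Proposition~\ref{zero-locus}. Your explicit rigidity computation $\Hom(I_W,\oo_X)\cong\bc$ is a welcome justification of the step the paper leaves implicit (``such a map of ideal sheaves must be an inclusion''); the only slip is the phrase about the fiber being ``reduced onto $W^{[k]}$,'' since $W^{[k]}$ itself may well be non-reduced---the correct check, which is what you actually describe, is that the cut-out scheme structure agrees with that of $W^{[k]}$.
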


\begin{proof}
Set-theoretically, a point in the fiber of $\iota_{\bp,k}$ over $s$ is a pair $V \to I_Z$ such that the composition $V \to I_Z \hookrightarrow \oo_X$ is $s$. Since $I_W$ is the image of $s$, this induces a map $I_W \to I_Z$, which is exactly a point of
$S_{I_W,k}$.
Such a map of ideal sheaves must be an inclusion, so $Z \subset W$, and it is equivalent to a surjective map $\oo_W\twoheadrightarrow \oo_Z$.
Thus the fiber, $S_{I_W,k}$, and $W^{[k]}$ all coincide as sets. To obtain these identifications at the level of schemes, one can compare the fiber with $S_{I_W,k}$ using arguments similar to the proof of Proposition \ref{zero-locus}, and one can compare $S_{I_W,k}$ with $W^{[k]}$ by using the universal properties of stable pairs and of the Hilbert scheme.
\end{proof}

Applying the lemma to $\iota_{\bp,1}$, we see that a $k$-fold point of $\iota_{\bp,1}$ is exactly a cosection $s \colon V \to \oo_X$ whose image is an ideal sheaf of $k$ distinct points. Thus, the set of $k$-fold points of $\iota_{\bp,1}$ is the image $\iota_{\bp,k}(U_{V,k}')$, where $U_{V,k}'\subset S_{V,k}$ is the open subscheme of surjective maps $V \to I_Z$ where $Z$ is reduced. Moreover, we can conclude that:

\begin{prop} Assume $U_{V,k}'$ is dense in $S_{V,k}$. Then the set-theoretic image of $\iota_{\bp,k} \colon S_{V,k} \to \bp$ is equal to the closure of the set of $k$-fold points of $\iota_{\bp,1} \colon S_{V,1} \to \bp$.
\end{prop}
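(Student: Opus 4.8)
The plan is to deduce the set equality from a soft topological argument, using the projectivity of $S_{V,k}$ together with the density hypothesis, once we recall the identification of the $k$-fold point set established just above. Write $N_k \subset \bp$ for the set of $k$-fold points of $\iota_{\bp,1} \colon S_{V,1} \to \bp$. The preceding discussion shows $N_k = \iota_{\bp,k}(U_{V,k}')$, so the claim to prove is exactly
\[
    \iota_{\bp,k}(S_{V,k}) = \overline{\iota_{\bp,k}(U_{V,k}')}.
\]

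First I would check that the image $\iota_{\bp,k}(S_{V,k})$ is closed in $\bp$. Since $\iota = \iota_{\bp} \times \iota_{X^{[k]}}$ is a closed immersion into the projective variety $\bp \times X^{[k]}$, the moduli space $S_{V,k}$ is itself projective; as $\iota_{\bp,k}$ is the composition of this closed immersion with the projection $\pi_1$ (which is proper because $X^{[k]}$ is projective), the map $\iota_{\bp,k}$ is proper. In particular its image is closed.

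The two inclusions then follow formally. Because $U_{V,k}'$ is dense in $S_{V,k}$ and $\iota_{\bp,k}$ is continuous, $\iota_{\bp,k}(S_{V,k}) = \iota_{\bp,k}(\overline{U_{V,k}'}) \subseteq \overline{\iota_{\bp,k}(U_{V,k}')}$. Conversely, $\iota_{\bp,k}(U_{V,k}')$ is contained in $\iota_{\bp,k}(S_{V,k})$, which is closed by the previous step, so $\overline{\iota_{\bp,k}(U_{V,k}')} \subseteq \iota_{\bp,k}(S_{V,k})$. Combining the two gives the desired equality.

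The only genuine subtlety---the main obstacle, such as it is---concerns which closure is intended: the $k$-fold point formalism records the analytic (Euclidean) closure of $N_k$, whereas the argument above naturally produces the Zariski closure. These agree here because $\iota_{\bp,k}(U_{V,k}')$ is the image of an open subscheme under a morphism, hence constructible by Chevalley's theorem, and for a constructible subset of a variety the Zariski closure and the analytic closure coincide. Thus the equality holds for either notion of closure, completing the proof.
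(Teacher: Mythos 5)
Your proof is correct and follows essentially the same route as the paper's: both use the identification of the $k$-fold point set with $\iota_{\bp,k}(U_{V,k}')$, properness (hence closedness) of $\iota_{\bp,k}$ together with the density hypothesis to conclude $\iota_{\bp,k}(S_{V,k}) = \overline{\iota_{\bp,k}(U_{V,k}')}$, and constructibility to reconcile Zariski with analytic closure. Your write-up merely unpacks the paper's one-line closed-map argument into the two inclusions and explicitly justifies properness, which the paper takes for granted.
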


\begin{proof} We have already observed that the set of $k$-fold points of $\iota_{\bp,1}$ is equal to $\iota_{\bp,k}(U'_{V,k})$. Since $\iota_{\bp,k}$ is proper, it is closed as a map of the underlying topological spaces, hence $\overline{\iota_{\bp}(U_{V,k}')} = \iota_{\bp}(\overline{U_{V,k}'}) = \iota_{\bp}(S_{V,k})$ (for constructible sets, closure in the Zariski topology is the same as closure in the analytic topology).
\end{proof}

Since the image of $\iota_{\bp,k} \colon S_{V,k} \to \bp$ has a natural scheme structure, we can view the proposition as a way to equip the closure of the locus of $k$-fold points of $\iota_{\bp,1}$ with a scheme structure (assuming the denseness condition holds).

As we have noted, the cohomology class of the closure of the set of $k$-fold points is only known to be well-behaved if the map $\iota_{\bp,1}$ is suitably generic, which we are unable to check. Even if we know that the set of $k$-fold points is finite, we still cannot guarantee that the $k$-fold point formula will correctly count the points. On the other hand, the virtual class of $S_{V,k}$ is both computable and is guaranteed to agree with the fundamental class of $S_{V,k}$ when $S_{V,k}$ is smooth. In the next subsection, we use these virtual classes as a rigorous way to count the points of finite Quot schemes.

\subsection{Counting points of finite Quot schemes}

We now return to the case of a general surface $X$. Assuming that $V$ is locally free,
we have constructed a closed embedding $\iota = \iota_{\bp} \times \iota_{X^{[k]}} \colon S_{V,k} \to \bp \times X^{[k]}$ and realized the push-forward of $[S_{V,k}]^{\text{vir}}$ as
\[
    e(\oo_{\bp}(1){ \boxtimes V^*}^{[k]}) = \sum_{i=0}^{kr} \pi_1^*(h)^i \cdot \pi_2^*(c_{kr-i}({V^*}^{[k]})),
\]
where $r = \rk(V)$ and $h$ is the the hyperplane class on $\bp$. Motivated by the connection between $S_{V,k}$ and the locus of $k$-fold points of $\iota_{\bp,1}$ described above, we push forward this class to $\bp$ under the projection map $\pi_1$ and get
\[
    {\iota_{\bp}}_*[S_{V,k}]^{\text{vir}} = \left(\int_{X^{[k]}} c_{2k}({V^*}^{[k]}) \right) h^{k(r-2)}.
\]
In cases where $k(r-2) \le \dim \bp$, this number
\[
    N_{V,k} := \int_{X^{[k]}} c_{2k}({V^*}^{[k]})
\]
can be interpreted as the degree of the pushforward of the virtual class (which is a replacement for the degree of the $k$-fold point class considered in \cite{BerGolJoh16}).

In cases where $S_{V,k}$ is finite and reduced, we can use this formula to count its points. We thus obtain a rigorous statement of the connection between $N_{V,k}$ and counts of finite Quot schemes that was observed by Johnson in \cite{Joh18}.

\begin{prop}\label{quot-count} Suppose $V$ is locally free and has no cosections vanishing on curves
and that the expected dimension of $\Quot(V,k)$ is zero. If $\Quot(V,k)$ is indeed finite and reduced, then
\[
    \# \Quot(V,k) = \# S_{V,k} = \int_{X^{[k]}} c_{2k}({V^*}^{[k]}).
\]
\end{prop}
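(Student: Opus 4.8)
The plan is to reduce the statement to the Euler-class computation of Theorem~\ref{vfc}, by first identifying $S_{V,k}$ with $\Quot(V,k)$ as schemes and then recognizing the virtual class as an honest fundamental class. Since $\Quot(V,k)$ is finite it is in particular $0$-dimensional, so the hypothesis that $V$ has no cosections vanishing on curves lets me apply Proposition~\ref{svk-equals-quot}(c) to obtain an isomorphism $S_{V,k}\cong\Quot(V,k)$. This settles the first equality $\#\Quot(V,k)=\#S_{V,k}$ at once, and it transports finiteness and reducedness from $\Quot(V,k)$ to $S_{V,k}$, so $S_{V,k}$ is a reduced $0$-dimensional (hence smooth) scheme.

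Next I would identify the virtual class with the fundamental class. The perfect obstruction theory of Theorem~\ref{vfc} has virtual dimension $\chi(V^*)-1-(\rk V-2)k$; the standing assumption $H^1(\oo_X)=0$ (so that (\ref{tr-def-surj}) holds automatically) together with the hypothesis that $\Quot(V,k)$ has expected dimension zero forces this virtual dimension to vanish. Since $S_{V,k}$ is smooth of dimension equal to its virtual dimension, the obstruction bundle of the perfect obstruction theory has rank zero and hence vanishes, so $[S_{V,k}]^{\vir}=[S_{V,k}]$. As $S_{V,k}$ is finite and reduced, the degree of this $0$-cycle is exactly $\#S_{V,k}$.

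Finally I would extract the number from Theorem~\ref{vfc}(b). Pushing $\iota_*[S_{V,k}]^{\vir}=e(\oo_\bp(1)\boxtimes {V^*}^{[k]})$ forward along $\pi_1$ gives ${\iota_\bp}_*[S_{V,k}]^{\vir}=(\int_{X^{[k]}}c_{2k}({V^*}^{[k]}))\,h^{k(\rk V-2)}$, as recorded above. Vanishing of the virtual dimension gives $k(\rk V-2)=\dim\bp$, using that $V^*$ has no higher cohomology so that $\dim\bp=\chi(V^*)-1$; thus $h^{k(\rk V-2)}$ is the point class on $\bp$, and taking degrees (which is preserved by proper pushforward) yields $\#S_{V,k}=\int_{X^{[k]}}c_{2k}({V^*}^{[k]})$, completing the chain of equalities.

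The main obstacle is the middle step: one must justify that the virtual class coming from the \emph{embedding} obstruction theory, rather than from the intrinsic deformation theory of pairs, collapses to the ordinary fundamental class. This rests on the dimension bookkeeping $\chi(V^*)-1-(\rk V-2)k=0$ and on the general principle that a perfect obstruction theory on a smooth scheme whose dimension equals its virtual dimension has trivial obstruction bundle. The delicate point is matching the numbers: the agreement of the two obstruction theories in Theorem~\ref{vfc}(a) is what guarantees that the virtual dimension of $S_{V,k}$ genuinely coincides with the expected dimension of the Quot scheme, so that the hypothesis of vanishing expected dimension really does place $[S_{V,k}]^{\vir}$ in $A_0$.
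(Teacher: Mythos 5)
Your argument is correct and is essentially the paper's own proof: Proposition~\ref{svk-equals-quot}(c) gives $S_{V,k} \cong \Quot(V,k)$, smoothness in dimension zero makes $[S_{V,k}]^{\vir} = [S_{V,k}]$, and the count is extracted from the pushforward ${\iota_{\bp}}_* [S_{V,k}]^{\vir} = \left(\int_{X^{[k]}} c_{2k}({V^*}^{[k]})\right) h^{k(\rk V - 2)}$ recorded just before the proposition. One small caveat in your (otherwise more explicit) dimension bookkeeping: the inference ``expected dimension of $\Quot(V,k)$ zero implies virtual dimension of $S_{V,k}$ zero'' actually uses $\chi(\oo_X)=1$, since the two expected dimensions differ by $\chi(\oo_X)-1 = h^2(\oo_X)$ when $H^1(\oo_X)=0$; this holds in the paper's application to $\bp^2$ (and can alternatively be forced from nonemptiness plus finiteness and reducedness, since vanishing tangent spaces make $d\sigma$ fiberwise injective, giving $\dim \bp - k(\rk V - 2) \le 0$), but $H^1(\oo_X)=0$ alone does not give it.
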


\begin{proof}
In this case $\Quot(V,k) \cong S_{V,k}$ (Proposition \ref{svk-equals-quot}), so $S_{V,k}$ is smooth of the expected dimension $0$. Thus the virtual class coincides with the fundamental class.
\end{proof}

\subsection{Proof of Theorem \ref{quot-equals-euler}}

In order to apply Proposition \ref{quot-count}, we need to know that the Quot scheme is finite and reduced in cases when the expected dimension is 0. The main result in this direction is on $\bp^2$, though it is expected to hold in greater generality.

\begin{thm}[\cite{BerGolJoh16} Theorem B]\label{quot-finite}
Let $k \ge 1$, $r \ge 2$, and $d \gg 0$. Let $V$ be a general stable vector bundle on $\bp^2$ of rank $r$, degree $-d$, and second Chern class chosen to ensure that the expected dimension of $\Quot(V,k)$ is 0. Then $\Quot(V,k)$ is finite and reduced and each quotient is the ideal sheaf of a reduced subscheme.
\end{thm}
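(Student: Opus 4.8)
The plan is to prove the apparently stronger statement that for a general $V$ the Quot scheme is \emph{smooth of dimension $0$}. Since the expected dimension is $0$ by hypothesis, and since for $d \gg 0$ stability of the kernel $E^* = \ker(V \twoheadrightarrow F)$ forces $\Ext^2(E^*, F) \cong \Hom(F, E^*(-3))^\vee = 0$ (the slope of $E^*(-3)$ lies far below that of the rank-one sheaf $F$), it suffices to show that the Zariski tangent space $\Hom(E^*, F)$ vanishes at every closed point. Vanishing of the tangent space makes each point reduced and isolated, delivering finiteness and reducedness at once; the remaining assertions (that $F$ is torsion free, an ideal sheaf $I_Z$, and that $Z$ is reduced) will fall out of the same genericity analysis.

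First I would pin down the shape of the quotients. For $d \gg 0$ a general stable $V$ has no cosection vanishing on a curve, which can be arranged exactly as in Proposition \ref{P2-good-properties}; by Lemma \ref{ideal-sheaf} the image of any composite $V \to F \hookrightarrow \oo_X$ is then the ideal sheaf of a $0$-dimensional scheme. Combined with the classification in Lemma \ref{rank1_classification}, this reduces the possible quotients to ideal sheaves $I_Z$ together with torsion. A torsion summand would place the point inside $\Quot(V, k')$ for some $k' < k$ plus extra, positive-dimensional Hilbert-scheme data, and a dimension count on these lower strata shows they cannot fill up the expected $0$-dimensional Quot scheme for general $V$. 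Thus I may assume every quotient is an honest $I_Z$ with $Z$ of length $k$ and $E^* = \ker(V \twoheadrightarrow I_Z)$ locally free of rank $r-1$ and $c_1 = -dH$.

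The core is a dimension count on the incidence variety
\begin{equation*}
    \mathcal{I} = \{ (V, [V \twoheadrightarrow I_Z]) \}, \qquad \pi \colon \mathcal{I} \to \mathcal{M}_V,
\end{equation*}
where $\mathcal{M}_V$ is the moduli space of stable bundles with the invariants of $V$. I would refibre $\mathcal{I}$ over the pairs $(E^*, Z)$ by sending a short exact sequence $0 \to E^* \to V \to I_Z \to 0$ to the pair $(E^*, Z)$; the fibre over $(E^*, Z)$ is the open locus of stable extensions in $\bp\Ext^1(I_Z, E^*)$, so $\mathcal{I}$ is an open subset of a projective bundle over the smooth parameter space $\mathcal{M}_{E^*} \times X^{[k]}$ and is therefore smooth. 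Using $\dim \mathcal{M}_V = 1 - \chi(V, V)$ and $\dim \mathcal{M}_{E^*} = 1 - \chi(E^*, E^*)$ on $\bp^2$ (stability forces simplicity and $\Ext^2(-,-)=0$), bilinearity of $\chi$ applied to $[V] = [E^*] + [I_Z]$, together with the numerology $\chi(E^*, I_Z) = 0$ (expected dimension $0$) and $\chi(I_Z, I_Z) = 1 - 2k$, yields the clean identity
\begin{equation*}
    \dim \mathcal{I} - \dim \mathcal{M}_V = \hom(I_Z, E^*) + \ext^2(I_Z, E^*).
\end{equation*}
Once both groups vanish, $\dim \mathcal{I} \le \dim \mathcal{M}_V$, so the general fibre of $\pi$ is finite or empty; and since $\mathcal{I}$ is smooth of dimension $\dim \mathcal{M}_V$, generic smoothness in characteristic $0$ makes the general fibre, i.e.\ a general $\Quot(V,k)$, reduced. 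Restricting the same count to the codimension-one locus of non-reduced $Z \in X^{[k]}$ shows that locus cannot dominate $\mathcal{M}_V$, so the subschemes appearing over a general $V$ are reduced.

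The two vanishings are where $d \gg 0$ enters, and they constitute the main obstacle. That $\Hom(I_Z, E^*) = 0$ is easy: a nonzero map is injective, embedding a rank-one subsheaf of slope $0$ into the stable bundle $E^*$ of slope $-d/(r-1) < 0$, which violates stability. The vanishing $\Ext^2(I_Z, E^*) \cong \Hom(E^*, I_Z(-3))^\vee = 0$ is more delicate, since slope alone does not forbid a map $E^* \to I_Z(-3)$; here I would resolve $I_Z$ by line bundles and reduce to the vanishing of $H^2$ of controlled twists of $E^*$, exploiting the positivity of $E = (E^*)^*$ for $d \gg 0$ and the genericity of $Z$. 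Two further points demand care and are, I expect, the genuinely hard part: the kernels $E^*$ need not be stable, so the bound $\dim \mathcal{M}_{E^*} \le 1 - \chi(E^*, E^*)$ must be replaced by a boundedness estimate valid for the whole family of possible kernels; and the implicit threshold ``$d \gg 0$'' must be made uniform over the range of $k$ under consideration. Both are precisely the places where the hypothesis $d \gg 0$ and the genericity of $V$ are indispensable.
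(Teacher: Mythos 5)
You should first be aware that the paper does not prove this statement at all: it is imported verbatim as Theorem~B of \cite{BerGolJoh16} and used as a black box (its only role here is to feed Proposition~\ref{quot-count}), so there is no internal proof to compare against and your proposal has to stand on its own. On its own terms, the skeleton is sound: I checked your key identity, and indeed with $\dim\mathcal{M}_{E^*}=1-\chi(E^*,E^*)$, $\dim X^{[k]}=2k$, fibers of dimension $\ext^1(I_Z,E^*)-1$, $\chi(I_Z,I_Z)=1-2k$, and $\chi(E^*,I_Z)=0$, bilinearity of $\chi$ gives exactly $\dim\mathcal{I}-\dim\mathcal{M}_V=\hom(I_Z,E^*)+\ext^2(I_Z,E^*)$, and $\Hom(I_Z,E^*)=0$ by stability is correct. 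For the other vanishing, note $\hom(E^*,I_Z(-3))\le h^0(E(-3))$ and $\chi(E(-3))=k(r-1)-3d<0$ for $d\gg0$, so generic vanishing follows from weak Brill--Noether for general stable bundles on $\bp^2$ (G\"ottsche--Hirschowitz) rather than from resolving $I_Z$; but this only holds on the stable locus with generic cohomology, and components of $\mathcal{I}$ over the jumping locus need separate bounds.

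There are, however, two genuine gaps beyond the ones you flag. First, your reduction to torsion-free quotients runs in the wrong direction: if $F$ has torsion $T$ of length $\ell$, then by Lemma~\ref{rank1_classification} its torsion-free part is $I_{Z'}$ with $|Z'|=k+\ell>k$, so the torsion strata of $\Quot(V,k)$ fiber over $\Quot(V,k+\ell)$ for $\ell>0$, not over smaller Quot schemes. Ruling them out therefore requires controlling all $\Quot(V,k+\ell)$ simultaneously (their expected dimension is $-(r-2)\ell$, which for $r=2$ is not even negative), i.e.\ an additional induction over $\ell$, bounded by an emptiness statement as in \S5 of the paper --- but the dimension bounds feeding that lemma are precisely what you are trying to prove, so this must be organized carefully to avoid circularity. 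Second, the unstable-kernel problem you acknowledge is not a peripheral technicality but the bulk of the work: the identity above uses $\dim\mathcal{M}_{E^*}=1-\chi(E^*,E^*)$, which fails off the stable locus, and kernels of surjections from a stable $V$ need not be semistable. One must stratify the (bounded, by Grothendieck) family of kernels by Harder--Narasimhan type and prove dimension estimates for each stratum (on $\bp^2$, Dr\'ezet--Le Potier-type bounds), after which the clean comparison with $\dim\mathcal{M}_V$ no longer drops out of bilinearity; relatedly, $\bp\Ext^1(I_Z,E^*)$ is a projective bundle only over strata where the relative Ext sheaf has constant rank, so the smoothness of $\mathcal{I}$ also needs this stratification. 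A small repair: your claim $\Ext^2(E^*,F)\cong\Hom(F,E^*(-3))^{\vee}=0$ presupposes $E^*$ stable, which is the very gap at issue; the unconditional route is the surjection $\Ext^2(V,I_Z)\twoheadrightarrow\Ext^2(E^*,I_Z)$ together with $h^2(V^*\otimes I_Z)=h^2(V^*)=h^0(V(-3))^{\vee}=0$ for stable $V$ of negative slope, as in (\ref{ext2-v-i}).
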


A simple computation shows that in this case,
\[
    c_2(V^*) = \binom{d+2}{2} - (k-1)(r-2),
\]
which is greater than $\binom{d+1}{2}$ since $d \gg 0$. Thus, we can apply Proposition \ref{P2-good-properties} and (\cite{BerGolJoh16}, Corollary 5.4) to conclude that $V$ has no cosections vanishing on curves, hence the number of points of $\Quot(V,k)$ is equal to $\int_{X^{[k]}} c_{2k}({V^*}^{[k]})$ (Proposition \ref{quot-count}).

The next step is to relate these integrals $\int_{X^{[k]}} c_{2k}({V^*}^{[k]})$ to the Euler characteristics of the relevant determinant line bundles. For this, we need $X$ to satisfy the condition $\chi(\oo_X)=1$ (but not necessarily $H^1(\oo_X)=0$). As before, let $f$ denote the invariants of an ideal sheaf of $k$ points (rank 1, determinant $\oo_X$, second Chern class $k$),
let $V$ be any vector bundle such that $\Quot(V,k)$ has expected dimension 0, and let $e^*$ denote the invariants of the kernels of the surjective maps that are parametrized by $\Quot(V,k)$. The expected dimension of the Quot scheme is 0 exactly when $\chi(e \cdot f) = 0$, and in this setting, Johnson conjectured that
\begin{conj}[\cite{Joh18} Conjecture 3.1]\label{numbers-match} Let $X$ be a smooth projective surface with $\chi(\oo_X)=1$. Then
\[
    \int_{X^{[k]}} c_{2k}({V^*}^{[k]}) = \chi(X^{[k]},\oo_{X^{[k]}}(\Theta_e)).
\]
\end{conj}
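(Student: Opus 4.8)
The plan is to reinterpret the conjecture as an equality of two generating series in a formal variable $q$, namely
\[
    Z^{\mathrm{Segre}}(q) = \sum_{k\ge 0} \Big(\int_{X^{[k]}} c_{2k}({V^*}^{[k]})\Big)\, q^k
    \quad\text{and}\quad
    Z^{\mathrm{Verlinde}}(q) = \sum_{k \ge 0} \chi(X^{[k]},\oo_{X^{[k]}}(\Theta_e))\, q^k,
\]
and to show that they agree coefficientwise. The first step is to put both series into the same \emph{universal product} form. For the Segre side this is essentially the content of our universality result (Proposition~\ref{universality}): the integral $\int_{X^{[k]}} c_{2k}({V^*}^{[k]})$ depends only on the finitely many characteristic numbers $c_1(V^*)^2$, $c_1(V^*)\cdot c_1(X)$, $c_2(V^*)$, $c_1(X)^2$, $c_2(X)$ (together with $r = \rk V$), so by Ellingsrud--G\"ottsche--Lehn there exist universal power series such that $Z^{\mathrm{Segre}}(q)$ factors as a product of these series raised to powers given by the characteristic numbers. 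For the Verlinde side, I would first apply Hirzebruch--Riemann--Roch on the smooth variety $X^{[k]}$ to write $\chi(X^{[k]},\oo_{X^{[k]}}(\Theta_e)) = \int_{X^{[k]}} \ch(\oo_{X^{[k]}}(\Theta_e))\,\td(T_{X^{[k]}})$, expressing $\oo_{X^{[k]}}(\Theta_e)$ and $T_{X^{[k]}}$ through tautological classes. This is again a tautological integral governed by the same universality, so $Z^{\mathrm{Verlinde}}(q)$ admits a product factorization with the same exponents.

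Having matched the structure, the equality $Z^{\mathrm{Segre}} = Z^{\mathrm{Verlinde}}$ becomes equivalent to an equality of the corresponding universal power series factor by factor. Taking logarithms, both $\log Z^{\mathrm{Segre}}$ and $\log Z^{\mathrm{Verlinde}}$ are \emph{linear} in the characteristic numbers, with coefficients that are universal series in $q$; it therefore suffices to prove that each universal series on the Segre side equals its counterpart on the Verlinde side. I would isolate these series by specializing to test families of surfaces-with-bundles that realize linearly independent vectors of characteristic numbers. Here the hypothesis $\chi(\oo_X)=1$ enters through Noether's formula $c_1(X)^2 + c_2(X) = 12$, which cuts down the $X$-parameters and leaves the invariants of $V^*$ and of $e$ as the effective freedom, reducing the all-$k$ statement to finitely many identities between universal power series.

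The main obstacle is precisely this last point: matching finitely many coefficients, as Johnson did for $k \le 11$, does not by itself force the universal series to agree to \emph{all} orders in $q$. To close the gap I would pursue a closed-form evaluation of each universal factor. On the Segre side the relevant series can be accessed through explicit tautological-integral technology (Lehn-type recursions and the known generating functions of Ellingsrud--G\"ottsche--Lehn); on the Verlinde side one would use product formulas for Euler characteristics of determinant line bundles on $X^{[k]}$. The conjecture then takes the shape of a Segre--Verlinde correspondence, asserting that these two families of universal series coincide after the change of variables dictated by the orthogonality condition $\chi(e\cdot f)=0$. Establishing this correspondence to all orders --- rather than merely confirming it numerically through $k \le 11$ --- is the crux, and is exactly where a genuine structural input (an honest Segre--Verlinde theorem, or an explicit closed form on both sides) is indispensable.
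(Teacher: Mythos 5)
There is a genuine gap here, and it is one your own last paragraph essentially concedes: the statement you were asked to prove is a \emph{conjecture} in this paper, and the paper contains no proof of it. What the paper records is that Johnson \cite{Joh18} \emph{verified} the equality computationally for $k \le 11$, and that he did so ``by comparing universal factorizations of generating series obtained from these numbers'' --- which is precisely the strategy you outline. Your reduction is sound as far as it goes: both $\int_{X^{[k]}} c_{2k}({V^*}^{[k]})$ and $\chi(X^{[k]},\oo_{X^{[k]}}(\Theta_e))$ (the latter via Hirzebruch--Riemann--Roch on the smooth variety $X^{[k]}$) are tautological integrals, so by Ellingsrud--G\"ottsche--Lehn universality \cite{EllGotLeh01} each generating series factors as a product of universal power series with exponents the characteristic numbers of $(X, V^*)$, and the hypothesis $\chi(\oo_X)=1$ cuts the $X$-dependence as you say. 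But this reduction is exactly the known part of the story; it is what makes the finite verification for $k \le 11$ meaningful, and nothing more.

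The decisive step --- that the universal series on the Segre side and on the Verlinde side agree to \emph{all} orders in $q$ --- is not established by your proposal. You name it as ``the crux'' requiring ``an honest Segre--Verlinde theorem, or an explicit closed form on both sides,'' but you produce neither; that step \emph{is} Conjecture~\ref{numbers-match} restated in the language of universal series, so as written the argument is circular at the point where it matters. This is consistent with how the paper itself handles the situation: Theorem~\ref{quot-equals-euler} carries the restriction $k \le 11$ precisely because the conjecture is open beyond that range, and the only unconditional instance cited is the case of Enriques surfaces \cite{MarOprPan1712}. In short, your proposal correctly reconstructs the reduction underlying Johnson's verification, but it does not close --- and candidly does not claim to close --- the gap between finitely many checked coefficients and an all-$k$ identity of universal power series.
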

 
\noindent Johnson verified the conjecture computationally for $k \le 11$ (\cite{Joh18} mentions only $k \le 6$, in the context of del Pezzo surfaces) and it is expected to hold for all $k$.

Thus, combining Theorem \ref{quot-finite} for $\bp^2$, the known cases of Conjecture \ref{numbers-match} ($k \le 11$), and the fact that the higher cohomology of the determinant line bundles vanishes for $d \gg 0$ (\cite{BerGolJoh16} Corollary 2.3), we complete the proof of Theorem \ref{quot-equals-euler}.

Note that Theorem \ref{quot-equals-euler} gives the equality of the first and third quantities in Conjecture \ref{sd-numerical}. However, we do not know how to relate these quantities to the number of sections of the determinant line bundle on $M(e)$.

\begin{rem}
Conjecture~\ref{numbers-match} has been proved for Enriques surfaces, see \cite[Proposition 4]{MarOprPan1712}.
\end{rem}

\vskip20pt
\section{Application to tautological integrals}
In this section, as a direct application of Theorem \ref{vfc}, we study the virtual integrals of polynomials in tautological classes over the moduli space $\svk$. As before, let $\jbullet$ denote the complex $\{\pi_2^* V\xrightarrow{\alpha} \mathcal{F}\}$ on $S_{V,k} \times X$, positioned at degrees 0 and 1. As usual we use $\pi_1,\pi_2$ to denote the projection maps onto the two factors. For a vector bundle $\Lambda$ over $X$, we consider the integral transform
\begin{equation}\label{int-tr}
    \IT(\Lambda)=R\pi_{1*}(\pi_2^*\Lambda\otimes \jbullet).
\end{equation}
In the Grothendieck $K$-group $K(\svk)$, it equals
\begin{eqnarray*}
&& R\pi_{1*}\pi_2^*(\Lambda\otimes V)-R\pi_{1*}(\pi_2^*\Lambda\otimes \mathcal{F})\\
&=& \oo^{\oplus\chi(\Lambda\otimes V)}
-R\pi_{1*}(\pi_2^*\Lambda\otimes \pi_1^*\iota_\bp^*\oo(1)\otimes (\iota_{X^{[k]}}\times \id_X)^*I_{\mathscr{Z}})
\quad \mbox{(Proposition~\ref{zero-locus})}\\
&=& \oo^{\oplus\chi(\Lambda\otimes V)}
-\iota_\bp^*\oo(1)\otimes
R\pi_{1*}(\iota_{X^{[k]}}\times \id_X)^*(\pi_2^*\Lambda
\otimes I_{\mathscr{Z}}) \\
&=& \oo^{\oplus\chi(\Lambda\otimes V)}
-\iota_\bp^*\oo(1)\otimes \iota_{X^{[k]}}^*R\pi_{1*}(\pi_2^*\Lambda
\otimes I_{\mathscr{Z}})\\
&=& \oo^{\oplus\chi(\Lambda\otimes V)}
-\iota_\bp^*\oo(1)^{\oplus \chi(\Lambda)}
+\iota_\bp^*\oo(1)\otimes \iota_{X^{[k]}}^*\Lambda^{[k]}.
\end{eqnarray*}
So, $\IT(\Lambda)$ is a pull-back from $\bp\times X^{[k]}$.
If $P$ is a polynomial in the Chern classes of $\IT(\Lambda)$, then by Theorem \ref{vfc},
\begin{equation}\label{extend-integral}
  \int_{[\svk]^\vir}P =\int_{\bp\times X^{[k]}}P\cdot e(V^{*[k]}\boxtimes \oo_\bp(1)).
\end{equation}

The tautological sheaves $V^{*[k]}$ on $X^{[k]}$ and $V^{*[k+1]}$ on $X^{[k+1]}$ can be related via the incidence correspondence $X^{[k,k+1]}\subset X^{[k]}\times X^{[k+1]}$.
Set-theoretically, the correspondence is
$X^{[k,k+1]}=\{(Z,Z^\prime)\mid Z\subset Z^\prime\}$.
This allows one to apply induction on the number of points.
Applying an argument similar to the proof of \cite[Proposition 0.5]{EllGotLeh01} yields the following universality statement.
\begin{prop}\label{universality}
  We assume the same conditions on $X$ and $V$ as in Theorem~\ref{vfc}.
  There is a universal polynomial $\tilde{P}$, depending only on $P$, in variables $\rk V$, $\rk \Lambda$, and intersection numbers among  $c_i(X)$, $c_i(V)$, and $c_i(\Lambda)$, such that
  \begin{equation*}
    \int_{[\svk]^\vir}P=\tilde{P}.
  \end{equation*}
\end{prop}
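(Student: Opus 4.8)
The plan is to use the explicit formula (\ref{extend-integral}) to transport the entire computation onto the smooth product $\bp\times X^{[k]}$, with $\bp=\bp\Hom(V,\oo_X)$, and then to peel off the projective-space factor so that only a tautological integral over the Hilbert scheme survives. Write $h=c_1(\oo_\bp(1))$ and $r=\rk V$. The $K$-theoretic computation preceding the statement shows that $\IT(\Lambda)$ is pulled back from $\bp\times X^{[k]}$, and in fact its class there is built out of $h$ and the tautological bundle $\Lambda^{[k]}$; similarly the Euler class expands as
\[
    e(V^{*[k]}\boxtimes\oo_\bp(1))=\sum_{i=0}^{kr}\pi_1^*(h)^i\cdot\pi_2^*c_{kr-i}(V^{*[k]}).
\]
First I would therefore write the integrand $P\cdot e(V^{*[k]}\boxtimes\oo_\bp(1))$ as $\sum_a\pi_1^*(h)^a\cdot\pi_2^*\gamma_a$, where each $\gamma_a\in A^*(X^{[k]})$ is a polynomial, with integer coefficients determined by $P$ alone, in the Chern classes of $V^{*[k]}$ and $\Lambda^{[k]}$.

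Next I would integrate out the $\bp$-factor. Since $\int_\bp h^{\dim\bp}=1$ while all other powers integrate to $0$, the product integral collapses to
\[
    \int_{[\svk]^\vir}P=\int_{X^{[k]}}\gamma_{\dim\bp},
\]
where $\dim\bp=\chi(V^*)-1$ because $V^*$ has no higher cohomology, and where $\gamma_{\dim\bp}$ lands automatically in codimension $2k$ by the dimension count behind (\ref{sp_expdim}). The ambient projective space enters only through this single exponent; but $\chi(V^*)$, together with the ranks $\chi(\Lambda\otimes V)$ and $\chi(\Lambda)$ that appear in the class of $\IT(\Lambda)$, are already, by Riemann--Roch, polynomials in $\rk V$, $\rk\Lambda$, and the intersection numbers among $c_i(X)$, $c_i(V)$, and $c_i(\Lambda)$. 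Thus selecting the coefficient $\gamma_{\dim\bp}$ does not leave the class of data permitted in the statement, and the problem is reduced to proving universality for the purely tautological integral $\int_{X^{[k]}}\gamma_{\dim\bp}$ of a polynomial in the Chern classes of the two tautological bundles $V^{*[k]}$ and $\Lambda^{[k]}$.

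The final and essential step is to establish this tautological universality by the method of Ellingsrud, G\"ottsche, and Lehn \cite{EllGotLeh01}. Using the incidence correspondence $X^{[k,k+1]}$ together with its projections to $X^{[k]}$, $X^{[k+1]}$, and $X$, one has short exact sequences relating $E^{[k+1]}$ and $E^{[k]}$ for any bundle $E$ on $X$, and these let one push every tautological integral over $X^{[k]}$ down, inductively on $k$, to integrals over products of copies of $X$ of Chern-class expressions in $E$ and $T_X$, with universal pushforward coefficients. Carrying out this induction for $V^*$ and $\Lambda$ at once expresses $\int_{X^{[k]}}\gamma_{\dim\bp}$ as a universal polynomial $\tilde P$ of the required form, completing the argument.

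The hard part will be this last step, because \cite{EllGotLeh01} is phrased for tautological bundles attached to \emph{line} bundles, whereas here $V^{*[k]}$ and $\Lambda^{[k]}$ come from bundles of arbitrary rank and must be treated simultaneously. I would need to check that the recursive structure of $X^{[k,k+1]}$ and the resulting pushforward formulas remain universal for higher-rank $E$ and for monomials involving several tautological bundles, so that the mutual intersection numbers (in particular those pairing $c_i(V)$ with $c_j(\Lambda)$) are the only genuinely new invariants that can appear in $\tilde P$.
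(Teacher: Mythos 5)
Your proposal takes essentially the same route as the paper: the paper likewise reduces via the $K$-theoretic computation and (\ref{extend-integral}) to an integral over $\bp\times X^{[k]}$ and then invokes the induction of \cite{EllGotLeh01} along the incidence correspondence $X^{[k,k+1]}$, while your intermediate bookkeeping (expanding in powers of $h$, extracting the coefficient of $h^{\dim\bp}=h^{\chi(V^*)-1}$, and absorbing $\chi(V^*)$, $\chi(\Lambda)$, $\chi(\Lambda\otimes V)$ via Riemann--Roch) correctly fills in details the paper leaves implicit. Your closing caveat about extending the recursion to higher-rank and multiple tautological sheaves is exactly what the paper glosses as ``an argument similar to the proof of \cite[Proposition 0.5]{EllGotLeh01},'' and it does go through, since the standard exact sequence on $X^{[k,k+1]}$ relating $E^{[k]}$ and $E^{[k+1]}$ holds for tautological sheaves of arbitrary rank.
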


\vskip20pt
\section*{Appendix}

We fix a point $(I_Z,\alpha\colon V\to I_Z)$ in $Z(\sigma)\cong \svk$. Let $s$ be the composition $V\xrightarrow{\alpha}I_Z \hookrightarrow \oo_X$. A base change argument using \cite[Theorem 1.4, Remark 1.5]{Lan83} or \cite{BPS80} shows that the map (\ref{dsigma-comp}) on fibers over $(I_Z,\alpha)$ is of the form
\begin{equation}\label{fibers-pot}
\Hom(V,\oo_X)/\mathbb{C} s \oplus \Hom(I_Z,\oo_Z)
\to \Hom(V,\oo_Z).
\end{equation}
We will show that (\ref{fibers-pot}) is obtained from the map $\Hom(V,\oo_X) \to \Hom(V,\oo_Z)$ induced by composition with $\oo_X \twoheadrightarrow \oo_Z$ as well as the map $\Hom(I_Z,\oo_Z) \to \Hom(V,\oo_Z)$ induced by composition with $\alpha$. This proves that the description of the global map (\ref{dsigma-comp}) given in \S\ref{pot-vfc} is correct since that description agrees with the fiberwise calculation given here on all fibers.

We will also show, given Condition (\ref{tr-def-surj}), that the kernel and the cokernel of (\ref{fibers-pot}) are $\Hom(J^\bullet, I_Z)_0$ and $\Ext^1(J^\bullet,I_Z)_0$, respectively.

Let $T={\rm Spec\,}\bc[\epsilon]/\epsilon^2$.
To describe (\ref{fibers-pot}), we will view spaces in (\ref{fibers-pot}) as Zariski tangent spaces and
describe elements in them as morphisms from $T$.
Furthermore, since $\bp$ and $X^{[k]}$ are fine moduli spaces, tangent vectors over $\bp\times X^{[k]}$ are equivalent to families over $T$.

An element $\lambda$ in $\Hom(I_Z, \oo_Z)$ corresponds to a (flat) deformation $q_T\colon \oo_{T\times X}\to Q$ of the quotient $\oo_X\to \oo_Z$ over $T$.
The quotient $q_T$ fits into the following commutative diagram of exact sequences over $T\times X$.
\begin{equation}\label{def-f}
  \begin{tikzcd}
    & 0 \arrow{d} & 0 \arrow{d} & 0 \arrow{d}&\\
    0 \arrow{r} & I_Z \arrow{r} \arrow{d} & \oo_X \arrow{r} \arrow{d}{\cdot \epsilon} & \oo_Z \arrow{r} \arrow{d} & 0\\
    0 \arrow{r} & E \arrow{r} \arrow{d} & \oo_{T\times X} \arrow{r}{q_T} \arrow{d}{/\epsilon} & Q \arrow{r} \arrow{d} & 0\\
    0 \arrow{r} & I_Z \arrow{r} \arrow{d} & \oo_X \arrow{r} \arrow{d} & \oo_Z \arrow{r} \arrow{d} & 0\\
    &0 &0 &0&
  \end{tikzcd}
\end{equation}
Pushing forward along the projection $T\times X\to X$, we view the diagram above as one over $X$. Then, $\oo_{T\times X}$ splits as $\oo_X\oplus \epsilon\oo_X$ on $X$. We consider the composition
$$I_Z\hookrightarrow \oo_X\hookrightarrow \oo_X\oplus \epsilon\oo_X \cong \oo_{T\times X} \xrightarrow{q_T} Q.$$
Here, the second inclusion is into the first summand. Viewing these maps as a path through the diagram from the $I_Z$ in the bottom-left corner to $Q$, we see that the composition is 0 modulo $\epsilon$. Thus, it factors through the $\oo_Z$ in the upper-right corner, inducing a map $I_Z\to \oo_Z$, which is $\lambda$.
(See, for example, \cite[Theorem 6.4.5]{FanGotIll05}.)
On the other hand, an element in $\Hom(V,\oo_X)/\bc s$ corresponds to a deformation
$$\bar{s}\colon \pi^*_2V \cong V\oplus \epsilon V\to \oo_{T\times X}\cong \oo_X\oplus \epsilon\oo_X$$ of $s$.
We can represent $\bar{s}$ by a matrix
$\bigl( \begin{smallmatrix}s & 0\\ a & s\end{smallmatrix}\bigr)$ by choosing a representative $a\in \Hom(V,\oo_X)$.

A tangent vector on $\bp\times X^{[k]} $ corresponds to the diagram (\ref{def-f}) and $\bar{s}$.
It also corresponds to a morphism $T\to \bp \times X^{[k]}$.
Then, the image of the tangent vector under $d\sigma$ is the composition
\begin{equation*}
  T\to \bp \times X^{[k]} \xrightarrow{\sigma} W,
\end{equation*}
which corresponds to the family
  $V\oplus \epsilon V\xrightarrow{\bar{s}} \oo_{T\times X} \xrightarrow{q_T} Q$ over $T\times X$.
Recall that $W$ is the geometric vector bundle associated to $\oo_\bp(1)\boxtimes V^{*[k]}$.
The composition
\begin{equation*}
  \gamma\colon V\hookrightarrow
  V\oplus \epsilon V
  \xrightarrow{\bar{s}}
  \oo_{T\times X} \xrightarrow{q_T} Q
\end{equation*}
factors through the $\oo_Z$ at the upper right corner of (\ref{def-f}),
 inducing the corresponding element in $\Hom(V,\oo_Z)$.
More concretely, the image of $a$ under (\ref{fibers-pot}) is
$$V\xrightarrow{a} \oo_X \twoheadrightarrow \oo_Z,$$
and the image of $\lambda$ is
$$V\xrightarrow{\alpha} I_Z \xrightarrow{\lambda} \oo_Z.$$
This completes the calculation of the map (\ref{fibers-pot}).

We now compute the kernel of (\ref{fibers-pot}). The morphism $\gamma$ is 0 if and only if $V\hookrightarrow V \oplus \epsilon V \xrightarrow{\bar{s}}\oo_{T\times X}$ factors through $E$ in (\ref{def-f}). Moreover, the composition $\epsilon V \xrightarrow{s} \epsilon \oo_X \to Q$ factors through $\oo_Z$ and is thus 0 since $s$ vanishes on $Z$, so in fact $\gamma$ is zero if and only if $\bar{s}$ factors through $E$. Since a map $\pi_2^* V \cong V \oplus \epsilon V \to E$ is a family of stable pairs over $T$, the kernel of (\ref{fibers-pot}) is isomorphic to the deformation space $\Hom(J^\bullet, I_Z)_0$.

Finally, we study the cokernel of (\ref{fibers-pot}).
The cokernel of $\Hom(V,\oo_X)/\mathbb{C} s \to \Hom(V,\oo_Z)$
is $\Ext^1(V,I_Z)$, following from the obvious exact sequence and the vanishing of the higher cohomology of $V^*$.
On the other hand, we have the following natural commutative diagram
\[\xymatrix{
    \Hom(I_Z,\oo_Z) \ar[d] \ar[r] & \Ext^1(I_Z,I_Z)_0\oplus H^1(\oo_X) \ar[d]\\
    \Hom(V,\oo_Z) \ar@{->>}[r] & \Ext^1(V,I_Z)
}\]
where the vertical arrows are both induced by $\alpha$.
The upper arrow identifies $\Hom(I_Z,\oo_Z)$ with $\Ext^1(I_Z,I_Z)_0$ according to Lemma~\ref{diff-tr}.
Because of the condition (\ref{tr-def-surj}), $H^1(\oo_X)$ is sent to zero by the right arrow.
The cokernel of (\ref{fibers-pot}) is then isomorphic to the cokernel of the right vertical arrow, which is $\Ext^1(J^\bullet, I_Z)_0$.

\bibliography{yinbangbib}{}

\newcommand{\etalchar}[1]{$^{#1}$}
\begin{thebibliography}{BMOY17}

\bibitem[ACGH85]{ArbCorGri85}
E.~Arbarello, M.~Cornalba, P.~A. Griffiths, and J.~Harris.
\newblock {\em Geometry of algebraic curves. {V}ol. {I}}, volume 267 of {\em
  Grundlehren der Mathematischen Wissenschaften [Fundamental Principles of
  Mathematical Sciences]}.
\newblock Springer-Verlag, New York, 1985.

\bibitem[Art88]{Art88}
I.~V. Artamkin.
\newblock On the deformation of sheaves.
\newblock {\em Izv. Akad. Nauk SSSR Ser. Mat.}, 52(3):660--665, 672, 1988.

\bibitem[BF97]{BehFan97}
K.~Behrend and B.~Fantechi.
\newblock The intrinsic normal cone.
\newblock {\em Invent. Math.}, 128(1):45--88, 1997.

\bibitem[BGJ16]{BerGolJoh16}
Aaron Bertram, Thomas Goller, and Drew Johnson.
\newblock Le potier's strange duality, quot schemes, and multiple point
  formulas for del pezzo surfaces.
\newblock {\em arXiv preprint arXiv:1610.04185}, 2016.

\bibitem[BMOY17]{BolMarOpr17}
Barbara Bolognese, Alina Marian, Dragos Oprea, and Kota Yoshioka.
\newblock On the strange duality conjecture for abelian surfaces {II}.
\newblock {\em J. Algebraic Geom.}, 26(3):475--511, 2017.

\bibitem[BPS80]{BPS80}
C.~Banica, M.~Putinar, and G.~Schumacher.
\newblock Variation der globalen {E}xt in {D}eformationen kompakter komplexer
  {R}\"{a}ume.
\newblock {\em Mathematische Annalen}, 250:135--156, 1980.

\bibitem[Cop95]{Cop95}
Marc Coppens.
\newblock Embeddings of general blowing-ups at points.
\newblock {\em J. Reine Angew. Math.}, 469:179--198, 1995.

\bibitem[Dan01]{Dan01}
Gentiana Danila.
\newblock Sur la cohomologie d'un fibr{\'e} tautologique sur le sch{\'e}ma de
  {H}ilbert d'une surface.
\newblock {\em J. Algebraic Geom.}, 10(2):247--280, 2001.

\bibitem[Dan02]{Dan02}
Gentiana Danila.
\newblock R\'esultats sur la conjecture de dualit\'e \'etrange sur le plan
  projectif.
\newblock {\em Bull. Soc. Math. France}, 130(1):1--33, 2002.

\bibitem[EGL01]{EllGotLeh01}
Geir Ellingsrud, Lothar G{{\"o}}ttsche, and Manfred Lehn.
\newblock On the cobordism class of the {H}ilbert scheme of a surface.
\newblock {\em J. Algebraic Geom.}, 10(1):81--100, 2001.

\bibitem[FGI{\etalchar{+}}05]{FanGotIll05}
Barbara Fantechi, Lothar G{{\"o}}ttsche, Luc Illusie, Steven~L. Kleiman, Nitin
  Nitsure, and Angelo Vistoli.
\newblock {\em Fundamental algebraic geometry}, volume 123 of {\em Mathematical
  Surveys and Monographs}.
\newblock American Mathematical Society, Providence, RI, 2005.
\newblock Grothendieck's FGA explained.

\bibitem[Fog68]{Fog68}
John Fogarty.
\newblock Algebraic families on an algebraic surface.
\newblock {\em Amer. J. Math}, 90:511--521, 1968.

\bibitem[God58]{God58}
Roger Godement.
\newblock {\em Topologie alg\'{e}brique et th\'{e}orie des faisceaux}.
\newblock Actualit'es Sci. Ind. No. 1252. Publ. Math. Univ. Strasbourg. No. 13.
  Hermann, Paris, 1958.

\bibitem[Gol18]{Gol18}
Thomas Goller.
\newblock A weighted topological quantum field theory for quot schemes on
  curves.
\newblock {\em Mathematische Zeitschrift}, Dec 2018.

\bibitem[Gro61]{Gro60}
Alexander Grothendieck.
\newblock Techniques de construction et th{\'e}or{\`e}mes d'existence en
  g{\'e}om{\'e}trie alg{\'e}brique. {IV}. {L}es sch{\'e}mas de {H}ilbert.
\newblock In {\em S{\'e}minaire {B}ourbaki}, volume~6, pages 249--276. Soc.
  Math. France, Paris, 1960-1961.

\bibitem[GY19]{GotYua19}
Lothar G\"{o}ttsche and Yao Yuan.
\newblock Generating functions for {$K$}-theoretic {D}onaldson invariants and
  {L}e {P}otier's strange duality.
\newblock {\em J. Algebraic Geom.}, 28(1):43--98, 2019.

\bibitem[HL10]{HuyLeh10}
Daniel Huybrechts and Manfred Lehn.
\newblock {\em The geometry of moduli spaces of sheaves}.
\newblock Cambridge Mathematical Library. Cambridge University Press,
  Cambridge, second edition, 2010.

\bibitem[Huy06]{Huy06}
D.~Huybrechts.
\newblock {\em Fourier-{M}ukai transforms in algebraic geometry}.
\newblock Oxford Mathematical Monographs. The Clarendon Press, Oxford
  University Press, Oxford, 2006.

\bibitem[Ill71]{Ill71}
Luc Illusie.
\newblock {\em {Complexe Cotangent et D\'{e}formations I}}.
\newblock Lecture Notes in Mathematics. Springer, Berlin, 1971.

\bibitem[Joh18]{Joh18}
Drew Johnson.
\newblock {Universal Series for Hilbert Schemes and Strange Duality}.
\newblock {\em International Mathematics Research Notices}, 05 2018.

\bibitem[Kol08]{Kol0805}
J{\'a}nos Koll{\'a}r.
\newblock Hulls and husks.
\newblock {\em arXiv preprint arXiv:0805.0576}, 2008.

\bibitem[KT14a]{KooTho14a}
Martijn Kool and Richard Thomas.
\newblock Reduced classes and curve counting on surfaces {I}: theory.
\newblock {\em Algebr. Geom.}, 1(3):334--383, 2014.

\bibitem[KT14b]{KooTho14b}
Martijn Kool and Richard Thomas.
\newblock Reduced classes and curve counting on surfaces {II}: calculations.
\newblock {\em Algebr. Geom.}, 1(3):384--399, 2014.

\bibitem[Lan83]{Lan83}
Herbert Lange.
\newblock Universal families of extensions.
\newblock {\em J. Algebra}, 83(1):101--112, 1983.

\bibitem[Lin18]{Lin18}
Yinbang Lin.
\newblock Moduli spaces of stable pairs.
\newblock {\em Pacific J. Math.}, 294(1):123--158, 2018.

\bibitem[Lip09]{Lip09}
Joseph Lipman.
\newblock Notes on derived functors and {G}rothendieck duality.
\newblock In {\em Foundations of {G}rothendieck duality for diagrams of
  schemes}, volume 1960 of {\em Lecture Notes in Math.}, pages 1--259.
  Springer, Berlin, 2009.

\bibitem[LP05]{LP05}
J.~Le~Potier.
\newblock Dualit{\'e} {\'e}trange sur le plan projectif.
\newblock Unpublished, 2005.

\bibitem[LT98]{LiTia98}
Jun Li and Gang Tian.
\newblock Virtual moduli cycles and {G}romov-{W}itten invariants of algebraic
  varieties.
\newblock {\em J. Amer. Math. Soc.}, 11(1):119--174, 1998.

\bibitem[Mar07]{Mar07}
Alina Marian.
\newblock On the intersection theory of {Q}uot schemes and moduli of bundles
  with sections.
\newblock {\em J. Reine Angew. Math.}, 610:13--27, 2007.

\bibitem[MO07a]{MarOpr07a}
Alina Marian and Dragos Oprea.
\newblock Counts of maps to {G}rassmannians and intersections on the moduli
  space of bundles.
\newblock {\em J. Differential Geom.}, 76(1):155--175, 2007.

\bibitem[MO07b]{MarOpr07}
Alina Marian and Dragos Oprea.
\newblock The level-rank duality for non-abelian theta functions.
\newblock {\em Invent. Math.}, 168(2):225--247, 2007.

\bibitem[MO07c]{MarOpr07b}
Alina Marian and Dragos Oprea.
\newblock Virtual intersections on the {Q}uot scheme and {V}afa-{I}ntriligator
  formulas.
\newblock {\em Duke Math. J.}, 136(1):81--113, 2007.

\bibitem[MO13]{MarOpr13}
Alina Marian and Dragos Oprea.
\newblock Generic strange duality for {$K3$} surfaces.
\newblock {\em Duke Math. J.}, 162(8):1463--1501, 2013.
\newblock With an appendix by Kota Yoshioka.

\bibitem[MO14]{MarOpr14}
Alina Marian and Dragos Oprea.
\newblock On the strange duality conjecture for abelian surfaces.
\newblock {\em J. Eur. Math. Soc. (JEMS)}, 16(6):1221--1252, 2014.

\bibitem[MOP17]{MarOprPan1701}
Alina Marian, Dragos Oprea, and Rahul Pandharipande.
\newblock Segre classes and {H}ilbert schemes of points.
\newblock {\em Ann. Sci. {\'E}c. Norm. Sup{\'e}r. (4)}, 50(1):239--267, 2017.

\bibitem[MOP19]{MarOprPan1712}
Alina Marian, Dragos Oprea, and Rahul Pandharipande.
\newblock Higher rank {S}egre integrals over the {H}ilbert scheme of points.
\newblock {\em arXiv preprint arXiv:1712.02382v3}, Jun 2019.

\bibitem[MR10]{MarRim10}
R.~Marangell and R.~Rim\'{a}nyi.
\newblock The general quadruple point formula.
\newblock {\em Amer. J. Math.}, 132(4):867--896, 2010.

\bibitem[Nak99]{Nak99}
Hiraku Nakajima.
\newblock {\em Lectures on {H}ilbert schemes of points on surfaces}, volume~18
  of {\em University Lecture Series}.
\newblock American Mathematical Society, Providence, RI, 1999.

\bibitem[OP19]{OprPan19}
Dragos Oprea and Rahul Pandharipande.
\newblock Quot schemes of curves and surfaces: virtual classes, integrals,
  euler characteristics.
\newblock {\em arXiv preprint arXiv:1903.08787}, 2019.

\bibitem[PT09]{PanTho09}
R.~Pandharipande and R.~P. Thomas.
\newblock Curve counting via stable pairs in the derived category.
\newblock {\em Invent. Math.}, 178(2):407--447, 2009.

\bibitem[PT14]{PanTho14}
R.~Pandharipande and R.~P. Thomas.
\newblock 13/2 ways of counting curves.
\newblock {\em In "Moduli spaces", LMS Lecture Note}, Series:411, 2014.

\bibitem[Sca07]{Sca07}
Luca Scala.
\newblock Dualit\'e \'etrange de {L}e {P}otier et cohomologie du sch\'ema de
  {H}ilbert ponctuel d'une surface.
\newblock {\em Gaz. Math.}, (112):53--65, 2007.

\bibitem[Sch12]{Schultheis12}
Daniel Schultheis.
\newblock {\em Virtual {I}nvariants on {Q}uot {S}chemes over {F}ano
  {S}urfaces}.
\newblock ProQuest LLC, Ann Arbor, MI, 2012.
\newblock Thesis (Ph.D.)--University of California, San Diego.

\bibitem[She11]{She11}
Artan Sheshmani.
\newblock {\em Towards studying of the higher rank theory of stable pairs}.
\newblock ProQuest LLC, Ann Arbor, MI, 2011.
\newblock Thesis (Ph.D.)--University of Illinois at Urbana-Champaign.

\bibitem[Wan15]{Wan15}
Malte Wandel.
\newblock Moduli spaces of semistable pairs in {D}onaldson-{T}homas theory.
\newblock {\em Manuscripta Math.}, 147(3-4):477--500, 2015.

\bibitem[Yua16]{Yua16}
Yao Yuan.
\newblock Strange duality on rational surfaces.
\newblock {\em arXiv preprint arXiv:1604.05509}, 2016.

\bibitem[Yua17]{Yua17b}
Yao Yuan.
\newblock Strange duality on rational surfaces ii: higher rank cases.
\newblock {\em arXiv preprint arXiv:1703.06665}, 2017.

\bibitem[Yua18]{Yua18}
Yao Yuan.
\newblock Strange duality on $\mathbb{P}^2$ via quiver representations.
\newblock {\em arXiv preprint arXiv:1807.09172}, 2018.

\end{thebibliography}
\bibliographystyle{alpha}

\end{document}